\documentclass[a4,12pt]{amsart}
%%%%%%%%%%%%%%%%%%%%%%%%%%%%%%%%%%%%%%%%%%%%%%%%%%%%%%%%
\oddsidemargin 0mm
\evensidemargin 0mm
\topmargin 0mm
\textwidth 160mm
\textheight 230mm
\tolerance=9999

%%%%%%%%%%%%%%%%%%%%%%%%%%%%%%%%%%%%%%%%%%%%%%%%%%%%%%%%
\usepackage{amssymb,amstext,amsmath,amscd,amsthm,amsfonts,enumerate,graphicx,latexsym, hyperref,cleveref,mathtools,mathrsfs,stmaryrd}
\usepackage[usenames]{color}
\usepackage[all]{xy}

 \usepackage{comment}
 \usepackage{mathrsfs}
\usepackage{tikz-cd}
\usepackage{tikz}
\usepackage{changepage}
\usetikzlibrary{shapes,arrows,fit,calc,positioning}
\usetikzlibrary{matrix}
\xyoption{all}
\usepackage{mathtools}
%%%%%%%%%%%%%%%%%%%%%%%%%%%%%%%%%%%%%%%%%%%%%%%%%%%%%%%%
\newtheorem{thm}{Theorem}[section]

\newtheorem{cor}[thm]{Corollary}

\newtheorem{lem}[thm]{Lemma}
\newtheorem{prop}[thm]{Proposition}

%%%%%%%%%%%%%%%%%%%%%%%%%%%%%%%%%%%%%%%%%%%%%%%%%%%%%%%%%%%%%%%%%
\theoremstyle{definition}
\newtheorem{dfn}[thm]{Definition}
\newtheorem{rem}[thm]{Remark}

\newtheorem{ex}[thm]{Example}
\newtheorem{conv}[thm]{Convention}

\newtheorem*{claim*}{Claim}
\theoremstyle{remark}

%%%%%%%%%%%%%%%%%%%%%%%%%%%%%%%%%%%%%%%%%%%%%%%%%%%%%%%%%%%%%%%%%

\newcommand{\PN}{\Phi_n(x)}
\numberwithin{equation}{thm}
%%%%%%%%%%%%%%%%%%%%%%%%%%%%%%%%%%%%%%%%%%%%%%%%%%%%%%%%%%%%%%%%%

\def\ass{\operatorname{Ass}}

\def\Hom{\operatorname{Hom}}

\def\ker{\operatorname{Ker}}

\def\m{\mathfrak{m}}

\def\n{\mathfrak{n}}

\def\D{\mathfrak{D}}

\def\spec{\operatorname{Spec}}

\def\spec{\operatorname{Spec}}

%%%%%%%%%%%%%%%%%%%%%%%%%%%%%%%%%%%%%%%%%%%%%%%%%%%%%%%%%%%%%%%%%
\begin{document}
%\allowdisplaybreaks
\setlength{\baselineskip}{15pt}
\title[On abelian extensions in mixed characteristic]{On abelian extensions in mixed characteristic and ramification in codimension one}
%\date{September 4, 2014}
\author{Daniel Katz}
\address{Department of Mathematics, University of Kansas, Lawrence, KS 66045-7523, USA}
\email{dlk53@ku.edu}
%\urladdr{https://dkatz.ku.edu/}

\author{Prashanth Sridhar}
\address{Department of Mathematics and Statistics, Auburn University}
\email{prashanth.sridhar0218@gmail.com}
%\urladdr{https://people.ku.edu/~p646s694/}
\begin{abstract} 
A theorem of Paul Roberts (\cite{RO}) states that the integral closure of a regular local ring in a generically abelian extension is Cohen-Macaulay, provided the characteristic of the residue field does not divide the order of the Galois group. An example of Koh in \cite{KO} shows the conclusion is false in the modular case. After a modification to the statement concerning ramification over $p$ in codimension one, we give an extension of Roberts's theorem to the modular case for unramified regular local rings in mixed characteristic when the $p$-torsion of the Galois group is annihilated by $p$. 
\end{abstract}
\thanks{{\em Mathematics Subject Classification} 2020: 13B05.}

\keywords{} %
\subjclass[2010]{}
\maketitle

%%%%%%%%%%%%%%%%%%%%%%%%%%%%%%%%%%%%%%%%%%%%%%%%%%

%%%%%%%%%%%%%%%%%%%%%%%%%%%%%%%%%%%%%%%%%%%%%%%%%%%%%%%%%%%%%%%
\section{Introduction} 

It is a classical question in commutative algebra and algebraic geometry to study the variety $\spec(R)$ in terms of the fibres of a finite morphism $f:\spec(R)\to \spec(S)$, where $R$ is a normal domain and $S$ is regular. Such an $S$ is available when $R$ is a finitely generated algebra over a field or is complete. For example, the purity of branch locus states that if $f$ is unramified in codimension one, then $f$ is \'{e}tale, see \cite{Zariski_pbl}, \cite{Nagata_remarks_pbl}, \cite{Nagata_pbl} and \cite{Auslander_pbl}. Generalizations and variants of this theorem have been studied extensively by relaxing the hypothesis that $S$ is regular and establishing whether good properties of $S$ transfer to good properties of $R$ when there is no ramification in low codimension, see for example \cite{SGA2}, \cite{Griffith1987}, \cite{Griffith1991}, \cite{Cutkosky95}, \cite{Kantorovitz1999}. One such property whose transfer has been studied is that of Cohen-Macaulayness. The work in this paper fits in the framework of a related, but orthogonal question - are there good patterns of ramification in codimension one that result in transfer of good properties (Cohen-Macaulayness) from (regular) $S$ to $R$? As far as we know, very little is known in this direction. 

\par 
Our study is motivated by a theorem of Roberts in \cite{RO} that states that the integral closure of a regular local ring in an abelian extension of its fraction field is Cohen-Macaulay, provided the characteristic of the residue field does not divide the order of the Galois group. This result has seen generalizations/applications to the theory of algebraic monoids, singularity theory and arithmetic schemes with a tame action over an abelian group, see \cite{Renner1983, Itoh1989, CPT2009}. We explain by means of an alternate proof of this statement (see \Cref{thm:Roberts}) as to why we view this result as one about ``good ramification" in codimension one. Roberts's theorem fails in the modular case, i.e., when the characteristic of the residue field divides the order of the Galois group. Koh in \cite{KO} gave an example of this phenomenon in mixed characteristic. One way to explain this is to note that Roberts's theorem relies on Maschke's theorem and there is no direct analog of such an argument in the modular case. But beyond this, not much seems to be known in this regard; see \cite{GRIFFITH2015502} for comments. Guided by intuition from our alternate proof of Roberts's theorem, we ask if there exists some analog of this theorem in mixed characteristic. 

\par Let $\Psi:S\to R$ be a map of commutative Noetherian rings and $p\in\mathbb{Z}$ a prime integer. Say $\Psi$ is \textit{$p$-unramified} if $\Psi$ is \'{e}tale in codimension one over $p$, i.e., if $S_P\to R_P$ is \'{e}tale for all height one primes $P\subseteq S$ containing $p$ (\Cref{def:p_unramified_morphism}). Similarly, in direct analogue to the notion in algebraic number theory, $\Psi$ is tamely $p$-ramified if it is so in codimension one over $p$, see \Cref{def:tame_p_ramification}. If $p\in S$ is a unit, these conditions are satisfied vacuously. It is reasonable to expect that in mixed characteristic $p>0$, if $S\to R$ is generically abelian with $S$ regular, $R$ normal and $S\to R$ $p$-unramified, then $R$ is Cohen-Macaulay. Unfortunately, this is not true either, as evidenced by Koh's example, see \Cref{ex:Koh}. Hence, to get an extension of Roberts's theorem to mixed characteristic, we turn to Kummer theory. Let $X$ be an indeterminate over $S$ and $p\in\mathbb{Z}$ a prime integer. Then we say $f\in S$ is \textit{$p$-unramified} (resp. \textit{tamely $p$-ramified}) if for some root $\omega$ of the polynomial $X^p-f\in S[X]$, $S\to \overline{S[\omega]}$ is $p$-unramified (resp. tamely $p$-ramified), where $\overline{*}$ denotes normalization (\Cref{def:p-unramified_element}).  Otherwise, we say $f\in S$ is $p$-ramified. The definitions are independent of the choice of the root $\omega$ if $S$ possesses a primitive $p$-th root of unity. We characterize these properties in terms of numerical conditions in codimension one using the function $\Gamma_I$ (see \Cref{conv1} for the $\Gamma_I$ notation):

\begin{thm}[\Cref{thm:p_ramification_characterization}]
   Let $S$ be a regular local ring such that $\mathrm{char}(\mathrm{Frac}(S))=0$. Assume $S$ possesses a primitive $p$-th root of unity for $p\in \mathbb{Z}$ a prime integer. Then the following are equivalent:
  \begin{enumerate}
      \item $0\neq f\in S$ is $p$-unramified.
      \item $0\neq f\in S$ is tamely $p$-ramified.

    \item either
    \begin{enumerate}
      \item $f\in S$ is a $p$-th power or
      \item $f\notin \bigcup_{Q\in \ass(S/(p))} Q$ and for all $Q\in \ass(S/(p))$

   \[\Gamma_{QS_Q}(f)\geq [\sum_{i=0}^{\infty}(1/p^i)]ord_Q(p)=\dfrac{p}{p-1}ord_Q(p)\]

  where $\Gamma_{QS_Q}(f)$ is the largest power $t$ of $Q$ such that $f$ admits a $p$-th root in $S_Q/Q^tS_Q$.
  \end{enumerate}
    
  \end{enumerate} 

\end{thm}

Assume $S$ has mixed characteristic $p>0$ and that it possesses a primitive $p$-th root of unity. Given a generically abelian extension $S\to R$, with $R$ normal, one has a canonical choice of elements and codimension one primes (which we call canonical divisors) in $S$ associated to it; this is explained in \Cref{sec:Abelian_extensions&p_unramified_property}. For instance, in the modular setup, if $S\to R$ is tamely $p$-ramified, then the canonical divisors are precisely the codimension one primes in $S$ away from $p$ that ramify in $R$, see \Cref{rem:canonical_divisors_modular_setup}. If in addition to $S\to R$ being tamely $p$-ramified, each canonical divisor in $S$ is tamely $p$-ramified (the analogous requirement is always satisfied in the non-modular setup), one obtains the following extension of Roberts's theorem for unramified regular local rings when the $p$-torsion of the Galois group is annihilated by $p$. In fact, our result is a bit more general (see \Cref{def:quasi_p_unramified} and \Cref{p-unramified_unramified_RLR} for the definition of abelian extensions of tamely $p$-ramified type):

\begin{thm}[\Cref{thm:roberts_extension}]
Let $S$ be an unramified regular local ring of mixed characteristic $p>0$ with quotient field $L$. Let $K/L$ be a finite abelian extension with $p$-torsion annihilated by $p$ and $R$ the integral closure of $S$ in $K$. If $K/L$ is of tamely $p$-ramified type over $S$, then $R$ admits a small Cohen-Macaulay algebra.
\end{thm}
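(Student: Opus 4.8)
The plan is to produce the small Cohen-Macaulay algebra by ``untwisting'' the tame ramification of $R/S$ in codimension one through a module-finite overring: one adjoins to $R$ suitable roots of $p$ and of generators of the canonical divisors so as to reach a ring $T$ that is unramified in codimension one over a Cohen-Macaulay base, and then invokes purity of the branch locus to conclude that $T$ is Cohen-Macaulay. The hypothesis that the $p$-torsion of $G$ is annihilated by $p$ is what forces the tame inertia groups occurring in codimension one to have cyclic $p$-part of order at most $p$, so that adjoining $p$-th roots suffices; the hypothesis that $K/L$ is of tamely $p$-ramified type is what guarantees that these root adjunctions do not introduce wild ramification over $p$.

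First I would make the harmless reduction of adjoining $\zeta_p$, a tame extension of degree dividing $p-1$: by \Cref{thm:Roberts} the integral closure of $S$ in $L(\zeta_p)$ is Cohen-Macaulay, indeed a regular local ring in which the prime over $p$ is still generated by a regular parameter; ``of tamely $p$-ramified type'' is inherited; and a small Cohen-Macaulay algebra over this larger ring is, by transitivity of module-finiteness and invariance of depth and dimension along finite extensions of domains, a small Cohen-Macaulay $R$-algebra. So assume $\zeta_p\in S$, and let $\varpi\in S$ be a regular parameter generating the prime $\mathfrak{p}$ of $S$ over $p$ (so $\varpi=p$ before the reduction). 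Since $S/\varpi S$ is regular, $\mathfrak{p}$ is the unique height one prime of $S$ over $p$ and every other height one prime has residue field of characteristic $0$; hence $R/S$ is automatically tame in codimension one away from $p$, and over $\mathfrak{p}$ it is --- since $K/L$ is tamely $p$-ramified --- tamely ramified with index $d$ prime to $p$. As tame inertia is cyclic and the $p$-part of $G$ is elementary abelian, every inertia group in codimension one has order $p^{\varepsilon}d'$ with $\varepsilon\in\{0,1\}$ and $(d',p)=1$. Let $(g_1),\dots,(g_m)$ be the canonical divisors --- the height one primes of $S$ away from $p$ at which $R/S$ ramifies, a finite set by purity of the branch locus --- with ramification indices $e_1,\dots,e_m$.

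For the construction, put $S^{\flat}=\overline{S[\,\varpi^{1/d},\,g_1^{1/e_1},\dots,g_m^{1/e_m}\,]}$ and let $T$ be the integral closure of $R$ in the compositum $K\cdot\mathrm{Frac}(S^{\flat})$; it is a module-finite $R$-algebra. By Abhyankar's lemma, adjoining $\varpi^{1/d}$ cancels the ramification of $R/S$ over $\mathfrak{p}$ and adjoining $g_j^{1/e_j}$ cancels its ramification along $(g_j)$, while away from $\mathfrak{p}\cup\bigcup_j(g_j)$ the extension $R/S$ was already unramified in codimension one; so $T/S^{\flat}$ is unramified in codimension one everywhere, \emph{provided} that $S^{\flat}/S$ does not become wildly ramified over $p$. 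That is precisely where the hypothesis is used: $\varpi^{1/d}$ with $(d,p)=1$ is tame over $\mathfrak{p}$, and writing $e_j=p^{\varepsilon_j}d'_j$ the prime-to-$p$ radical $g_j^{1/d'_j}$ is automatically tame over $\mathfrak{p}$ while the factor $g_j^{1/p^{\varepsilon_j}}$ contributes no wild ramification over $\mathfrak{p}$ exactly because $(g_j)$ is tamely $p$-ramified --- equivalently, by \Cref{thm:p_ramification_characterization}, because $\Gamma_{\mathfrak{p}S_{\mathfrak{p}}}(g_j)\geq\frac{p}{p-1}\,\mathrm{ord}_{\mathfrak{p}}(p)$. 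So $S^{\flat}/S$ is at worst tamely ramified over $p$, all of it cancelled inside $T$, and $T/S^{\flat}$ is unramified in codimension one.

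Finally, because $\varpi$ is a regular parameter, $S[\varpi^{1/d}]=S[y_0]/(y_0^{d}-\varpi)$ is again a regular local ring, and adjoining the remaining variables yields the complete intersection $S[y_0,\dots,y_m]/(y_0^{d}-\varpi,\,y_1^{e_1}-g_1,\dots,y_m^{e_m}-g_m)$ over $S$, whose normalization $S^{\flat}$ is a Cohen-Macaulay normal domain (and, being Cohen-Macaulay and finite over the regular ring $S$ with zero-dimensional fibres, is free over $S$). If moreover $S^{\flat}$ is regular --- which holds when the $g_j$ can be taken inside a regular system of parameters --- then purity of the branch locus forces the codimension-one-unramified extension $T/S^{\flat}$ to be \'{e}tale, hence $T$ to be regular; and an \'{e}tale algebra over a Cohen-Macaulay ring is Cohen-Macaulay, so $T$ is a small Cohen-Macaulay $R$-algebra, as required. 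I expect the crux to be exactly this last step: in general the canonical divisors need not be generated by regular parameters, so $S^{\flat}$ is only a Cohen-Macaulay complete intersection over $S$, and one must either straighten the canonical divisors by a further module-finite operation or establish purity of the branch locus over such an $S^{\flat}$ directly. This gap --- between a regular and a merely Cohen-Macaulay base --- is what forces the conclusion to be the existence of a small Cohen-Macaulay algebra rather than the Cohen-Macaulayness of $R$ itself.
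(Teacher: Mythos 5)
Your high-level picture is right -- adjoin roots of the canonical divisors and a root of a parameter over $p$, and use the tame $p$-ramification hypothesis to keep the wild ramification under control -- and this does match the spirit of the paper's proof. But there are several places where your argument has a genuine hole rather than a deferred technicality, and they are exactly where the paper's hard work lives.

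First, you take $S^{\flat}$ to be the \emph{normalization} of the complete intersection $S[y_0,\ldots,y_m]/(y_0^d-\varpi,\, y_1^{e_1}-g_1,\ldots,y_m^{e_m}-g_m)$ and assert it is Cohen-Macaulay (indeed free over $S$). In dimension $\ge 3$ the normalization of a Cohen-Macaulay domain need not be Cohen-Macaulay, and in fact proving that this particular normalization is Cohen-Macaulay is essentially the content of the theorem; you cannot get it for free. The paper does it by showing that, over a suitable regular $T$, the normalization $\overline{T[\sqrt[p]{f_i}]}$ is $\Hom_{T[\sqrt[p]{f_i}]}(J_i, T[\sqrt[p]{f_i}])$ for an explicit determinantal ideal $J_i$, and that $J_i$ is linked to a Hilbert--Burch ideal (\Cref{CM_dual_prop}); the $\Gamma$-criterion (\Cref{thm:p_ramification_characterization}) is used precisely to write each canonical element $f_i$ as $h_i^p+(\psi-1)^p d_i$, which is what feeds the linkage computation. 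Then \Cref{kernel_lemma} shows the join of these normalizations is the tensor product, one checks $(S_2)$ plus regularity in codimension one to get integral closedness, and freeness over $T$ gives Cohen-Macaulayness. None of this is captured by invoking Abhyankar's lemma and purity.

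Second, even granting that $T/S^{\flat}$ is unramified in codimension one, purity of the branch locus needs the base to be \emph{regular} (or at least locally complete intersection by SGA2), and over a merely Cohen-Macaulay normal base it can fail, so ``unramified in codimension one implies \'{e}tale implies Cohen-Macaulay'' is not available. You flag this as the crux, but then suggest one might ``establish purity of the branch locus over such an $S^{\flat}$ directly,'' which is not a route the paper takes and is not in general true. Relatedly, your diagnosis of why the conclusion is only ``admits a small Cohen-Macaulay algebra'' rather than ``$R$ is Cohen-Macaulay'' is off: the weaker conclusion is forced because the Maschke-style direct-summand descent from the overring to $R$ fails when $p$ divides the degree, not because of the regular-versus-Cohen-Macaulay dichotomy for $S^{\flat}$.

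Third, the hypothesis is that $K/L$ is \emph{of tamely $p$-ramified type} over $S$, not that it is tamely $p$-ramified; this means there is an auxiliary module-finite extension of regular rings $S[\epsilon]\to T$ as in \Cref{def:quasi_p_unramified}, and only over $T$ does the tame $p$-ramification hold. Your reduction silently replaces the weaker ``type'' hypothesis with the stronger one. The paper opens its proof by fixing this $T$ and checking that $\psi-1$ remains a parameter of the right order in $T$ (conditions (1) and (2) of \Cref{def:quasi_p_unramified}); all the subsequent constructions are over $T$, not over $S[\epsilon]$.
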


We discuss how our results apply to Koh's example, see \Cref{ex:Koh}; in particular, Koh's example admits a small Cohen-Macaulay algebra. Finally, we observe in \Cref{cor:p-ramified} that the $p$-ramified canonical divisors are in some sense the obstruction to such an analog in full generality and present a calculation involving the first $p$-ramified case showing the existence of a small Cohen-Macaulay module of rank at most $(p-1)p^{p(d-1)+1}$, where $d=\dim(S)$.
\par

The paper is organized as follows. Section 2 contains preliminary definitions and results that are used later. Section 3 contains the alternate take on Roberts's theorem. Section 4 consists of the numerical characterization of the $p$-unramified property and the definitions of (quasi) $p$-unramified abelian extensions. Section 5 presents the main result and includes a discussion on Koh's example. Finally, section 6 comments on the $p$-ramified case.

\section{Preliminaries}

In this section, we present some definitions and prove some preliminary results in preparation for the sections that follow.

\begin{conv}\normalfont\label{conv1}
\noindent \begin{enumerate}
\item Rings are commutative and modules are finitely generated.
\item For an integer $n$, $\PN\in \mathbb{Z}[x]$ will denote the $n$-th cyclotomic polynomial.
\item A Noetherian ring $R$ admits a small Cohen-Macaulay (CM) algebra $T$ if there is an injective, module finite map of rings $R\rightarrow T$ such that $T$ is Cohen-Macaulay.

\item Suppose $S$ is a ring and $I\subsetneq S$ an ideal. For $m\in \mathbb{Z}$, $m\geq 0$, let $\phi_m:S\rightarrow S/I^m$ denote the natural map. Define
\[\Gamma_I: S \times \mathbb{Z}\rightarrow \mathbb{Z}_{\geq 0}\cup \{\infty\}\]
\[(f,n)\mapsto \{sup\{m\}\:|\:\sqrt[n]{\phi_m(f)}\in S/I^m\}.\]
Here $\sqrt[n]{\phi_m(f)}$ refers to any root of the polynomial $X^n-f\in S/I^m[X]$, where $X$ is an indeterminate over $S/I^m$.

\item A Noetherian ring $R$ of prime characteristic $p>0$ is \textit{$F$-finite} if the Frobenius endomorphism $F:R\to R$ makes $R$ into a module-finite $R$-algebra.
\end{enumerate}
\end{conv}

Recall the following:

\begin{dfn}[\cite{AB}]
 Let $S$ be a ring and $R$ an $S$-algebra. $P\in \spec(S)$ is unramified in $R$ if for all $Q\in \spec(R)$ lying over $P$, $PR_Q=QR_Q$ and $S_P/PS_P\rightarrow R_Q/QR_Q$ is a finite separable field extension. We say $R$ is unramified over $S$ if every $P\in \spec{S}$ is unramified in $R$. We say $R$ is \'{e}tale over $S$ if it is flat and unramified over $S$. 
\end{dfn}

\begin{rem}[\cite{AB}]\label{rem:normal_etale}
    Let $S\rightarrow R$ be a module finite extension of normal domains. Then $R$ is unramified over $S$ if and only if $R$ is \'{e}tale over $S$.
\end{rem}

 We say a ring map $S\to R$ is \'{e}tale over $a\in S$ in codimension one if $S_P\to R_P$ is \'{e}tale for all height one primes $P\subseteq S$ containing $a$.

 \begin{dfn}\label{def:p_unramified_morphism}
     Let $\Psi:S\to R$ be a map of Noetherian rings and $p\in\mathbb{Z}$ a prime integer. We say $\Psi$ is $p$-unramified if $\Psi$ is \'{e}tale in codimension one over $p$. Otherwise, we say $\Psi$ is $p$-ramified.
 \end{dfn}

 \begin{dfn}
 A local extension of DVRs $(V_1,\pi_1,k_1)\to(V_2,\pi_2,k_2)$ is \textit{tamely ramified} if the induced extension of residue fields is separable and $\mathrm{ord}_{\pi_2V_2}(\pi_1)$ is coprime to $\mathrm{char}(k_1)$. 
\end{dfn}

\begin{dfn}\label{def:tame_p_ramification}
 Let $\Psi:S\to R$ be a module finite map of normal domains and $p\in \mathbb{Z}$ a prime integer. We say $\Psi$ is \textit{tamely $p$-ramified}, if for all height one primes $Q\in \spec(R)$ containing $p$, $S_{Q\cap S}\to R_Q$ is tamely ramified.
\end{dfn}

\begin{dfn}\label{def:p-unramified_element}
  Let $S$ be a noetherian semi-local regular ring and $X$ an indeterminate over $S$. Let $p\in\mathbb{Z}$ be a prime integer and $n$ any integer. Then $f\in S$ is \textit{$p$-unramified over $n$} if for some root $\omega$ of the polynomial $X^n-f\in S[X]$, $S\to \overline{S[\omega]}$ is $p$-unramified, where $\overline{*}$ denotes normalization.  Otherwise, $f\in S$ is \textit{$p$-ramified over $n$}. If $n=p$, we just say $f\in S$ is $p$-unramified or $p$-ramified respectively. A subset $V\subseteq S$ is $p$-unramified over $n$ if each element of $V$ is so.
  \par
We say $f\in S$ is \textit{tamely $p$-ramified over $n$} if for some root $\omega$ of the polynomial $X^n-f\in S[X]$, $S\to \overline{S[\omega]}$ is tamely $p$-ramified. If $n=p$, we just say $f\in S$ is tamely ramified.

\end{dfn}

\begin{rem}
    In general, \Cref{def:p-unramified_element} is dependent on the choice of the $n$-th root. For instance, if $S$ is an unramified regular local ring of odd mixed characteristic $p$ and $f=h^p$ is a $p$-th power in $S$, then the extension corresponding to $\omega=h$ is \'{e}tale in codimension one over $p$, but the one corresponding to $\omega=h\epsilon$ for $\epsilon$ a primitive $n$-th root of unity is not. However, when $S$ possesses a primitive $n$-th root of unity, any two distinct $n$-th roots define the same extension and hence the definition is independent of the choice of $n$-th root. 
\end{rem}

\begin{rem}
    With notation as in \Cref{def:p-unramified_element}, if $q\mid n$ and $f\in S$ is $p$-unramified over $n$, then $f\in S$ is $p$-unramified over $q$. Similarly, if $f\in S$ is tamely $p$-ramified over $n$, then $f\in S$ is tamely $p$-ramified over $q$.
\end{rem}

We need \Cref{thm:p_ramification_characterization} to give interesting examples of \Cref{def:p-unramified_element}, but \Cref{ex:basic} lists a few basic ones. On the flip side, once we have \Cref{thm:p_ramification_characterization}, its explicit nature makes it easy to write down examples.

\begin{ex}\label{ex:basic}
\begin{enumerate}
    \item If $p\in S$ is a unit, then vacuously, every $f\in S$ is $p$-unramified.
    \item If $S$ is a regular local ring of mixed characteristic $p>0$, then $p\in S$ is $p$-ramified. More generally, any $f\in \bigcup_{P\in \ass(S/p)} P$ is $p$-ramified.
    \item If $S$ is an unramified regular local ring of mixed characteristic $p>0$ and $f\in S$ is not a $p$-th power modulo $pS$, then $f\in S$ is $p$-ramified. To see this, note that since $S/pS$ is integrally closed, $X^p-f$ is irreducible modulo $pS$, so that $p\in S[\omega]$ is prime and the induced extension of residue fields from $S_{(p)}\to S[\omega]_{(p)}$ is purely inseparable.
\end{enumerate}
      
\end{ex}

For a ring $S$, an element $x\in S$ is said to be square free if for all height one primes $x\in Q\subseteq S$, $QS_Q=xS_Q$. Throughout this paper, the notation $\sqrt[n]{a}$ for $a\in S$, refers to any root of the polynomial $X^n-a\in S[X]$ in some extension of the total quotient ring of $S$ (when there is no cause for confusion). 
We include the following two results from \cite{HK} for convenience:

\begin{prop}[\cite{HK}]\label{HK1}
Let $S$ be an integrally closed Noetherian domain and $n\in S$ a unit for some positive integer $n$. Let $a_1,\dots,a_r\in S$ be square free elements such that no two of them are contained in a single height one prime ideal. Then $a_2,\dots,a_r$ are square free in $S[\sqrt[n]{a_1}]$.
\end{prop}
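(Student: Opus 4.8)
\emph{Sketch of the approach.} The plan is to verify the ideal‑theoretic definition of square‑freeness directly, reducing everything to the case of a DVR. Fix $j\ge 2$ and set $T:=S[\sqrt[n]{a_1}]=S[X]/(X^n-a_1)$; since $X^n-a_1$ is monic, $T$ is a free $S$‑module of rank $n$, so $S\to T$ is finite and flat. I want to show that for every height one prime $\mathfrak{Q}\subseteq T$ with $a_j\in\mathfrak{Q}$ one has $\mathfrak{Q}T_{\mathfrak{Q}}=a_jT_{\mathfrak{Q}}$. Since $a_j$ lies in the proper ideal $\mathfrak{Q}$ it is a non‑unit of $S$, and $a_j\neq 0$ because a square‑free element is nonzero; hence $P:=\mathfrak{Q}\cap S$ is a nonzero prime of $S$ with $\height P\le\height\mathfrak{Q}=1$ by flatness, so $\height P=1$ and $V:=S_P$ is a DVR. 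This is where the hypothesis enters: because no two of the $a_i$ lie in a common height one prime and $j\neq 1$, we have $a_1\notin P$, so $a_1$ becomes a unit of $V$; and square‑freeness of $a_j$ gives $PV=a_jV$, so $a_j$ is a uniformizer of $V$.

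Next I would base change along $S\to V$. One has $T_P:=T\otimes_S V=V[X]/(X^n-a_1)$, and $\mathfrak{Q}$ survives in $T_P$ (as $\mathfrak{Q}\cap S=P$), giving a prime $\mathfrak{q}:=\mathfrak{Q}T_P$ with $\mathfrak{q}\cap V=PV$, the maximal ideal of $V$, and $(T_P)_{\mathfrak{q}}=T_{\mathfrak{Q}}$. Since $n$ and $a_1$ are units of $V$, the polynomial $X^n-a_1$ is separable over $V$: in $T_P$ the class of $X$ is a unit (its $n$‑th power equals $a_1$), hence the class of the derivative $nX^{n-1}$ is a unit, so $\Omega_{T_P/V}=0$ and $V\to T_P$ is standard étale, in particular finite, flat and unramified. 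By the very definition of an unramified morphism, at the prime $\mathfrak{q}$ lying over the maximal ideal $PV$ we get $PV\cdot(T_P)_{\mathfrak{q}}=\mathfrak{q}(T_P)_{\mathfrak{q}}$, i.e. $a_jT_{\mathfrak{Q}}=\mathfrak{Q}T_{\mathfrak{Q}}$, which is exactly what was wanted.

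I do not anticipate a genuine obstacle here. The one point to keep in mind is that $T$ need not be normal, or even a domain, when $X^n-a_1$ is reducible, so one cannot argue with a single valuation on $T$; the argument above circumvents this by using the purely ideal‑theoretic definition of square‑free and localizing at the relevant height one prime of $S$, after which everything reduces to the transparent case of an étale algebra over a DVR. (If one prefers, one can observe that $T_P$, being finite étale over the DVR $V$, is a finite product of DVRs, and read off that $a_j$ generates the maximal ideal of each localization directly; but the unramifiedness statement already delivers the conclusion without this decomposition.)
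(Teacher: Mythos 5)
The paper does not prove this proposition; it is quoted from \cite{HK} and used as a black box. Your proof is correct and gives a clean, self-contained argument. Localizing at $P=\mathfrak{Q}\cap S$ is exactly the right move: after inverting $S\setminus P$, the element $a_1$ becomes a unit and $X^n-a_1$ becomes separable, so $V\to T_P$ is \'etale and the conclusion drops out of unramifiedness over the DVR $V$.

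Two small remarks, neither of which affects correctness. First, the identification $T=S[\sqrt[n]{a_1}]=S[X]/(X^n-a_1)$ is accurate only when $X^n-a_1$ is irreducible over $S$ (e.g.\ when $a_1$ is a square-free non-unit, by Lang's criterion); if $a_1$ is a unit, $S[\sqrt[n]{a_1}]$ is $S[X]/(f)$ for $f$ a monic irreducible factor of $X^n-a_1$ in $S[X]$. You flag this yourself, and the argument survives unchanged: $T$ is still $S$-free of some rank dividing $n$, and $V[X]/(f)$ is still \'etale over $V$ because $f$ divides the separable polynomial $X^n-a_1$. Second, your caution that $T$ might fail to be normal is more conservative than necessary here, since \Cref{HK2} with $r=1$ already gives normality of $S[\sqrt[n]{a_1}]$; but your ideal-theoretic route doesn't use normality, which is a feature rather than a drawback.
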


\begin{prop}[\cite{HK}]\label{HK2}
Let $S$ be an integrally closed Noetherian domain and $n\in S$ a unit for some positive integer $n$. Let $a_1,\dots,a_r\in S$ be square free elements such that no two of them are contained in a single height one prime ideal. Then $R=S[\sqrt[n]{a_1},\dots,\sqrt[n]{a_r}]$ is integrally closed.
\end{prop}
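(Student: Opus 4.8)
\textbf{Proof proposal for \Cref{HK2}.}

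The plan is to deduce integral closedness of the iterated Kummer extension $R = S[\sqrt[n]{a_1},\dots,\sqrt[n]{a_r}]$ by induction on $r$, using \Cref{HK1} to keep the square-free hypothesis intact at each stage. For $r = 1$, I would first argue that $R_1 = S[\sqrt[n]{a_1}]$ is integrally closed: since $n$ is a unit in $S$ and $a_1$ is square free, one checks that $S \to R_1$ is unramified in codimension one (the only primes where ramification could occur are the height one primes $Q \ni a_1$, and over each such $Q$, $R_1$ is obtained by adjoining an $n$-th root of a uniformizer with $n$ invertible, which gives a tamely ramified, hence in particular normal, DVR extension; away from $V(a_1)$ the map is étale). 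Because $S$ is a normal Noetherian domain, $R_1$ is reflexive as an $S$-module (it is $S$-torsion-free and satisfies Serre's $(S_2)$ as a module, being a direct sum of the rank-one reflexive modules $S \cdot (\sqrt[n]{a_1})^j$ for $0 \le j < n$, each isomorphic to a divisorial fractional ideal determined by $\lfloor j/n \rfloor$-type data on the divisor $\operatorname{div}(a_1)$), and a reflexive extension that is unramified in codimension one over a normal domain is normal by the Zariski--Nagata purity circle of ideas; concretely, $R_1$ satisfies $(R_1)$ and $(S_2)$, hence is normal by Serre's criterion.

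For the inductive step, suppose $R_{r-1} = S[\sqrt[n]{a_1},\dots,\sqrt[n]{a_{r-1}}]$ is integrally closed. Apply \Cref{HK1} repeatedly: since $a_1,\dots,a_r$ are square free in $S$ with no two in a common height one prime, $a_2,\dots,a_r$ remain square free in $S[\sqrt[n]{a_1}]$; iterating, $a_r$ is square free in $R_{r-1}$. Moreover $n$ is still a unit in $R_{r-1}$. The remaining point is that the image of $a_r$ in $R_{r-1}$ still has the property that it is not contained in two distinct height one primes "with the same ramification data'' — but actually for the $r=1$ argument applied over the base $R_{r-1}$, all I need is square-freeness of $a_r$ in $R_{r-1}$ and invertibility of $n$, which I already have. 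So $R_r = R_{r-1}[\sqrt[n]{a_r}]$ is integrally closed by the $r=1$ case applied with base ring $R_{r-1}$ (which is a normal Noetherian domain by the inductive hypothesis). This completes the induction.

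The main obstacle, and the step deserving the most care, is the base case: verifying that adjoining a single $n$-th root of a square-free element to a normal Noetherian domain (with $n$ invertible) yields a normal domain. The subtlety is that square-freeness of $a_1$ is exactly what controls the local structure of $S[\sqrt[n]{a_1}]$ at height one primes — at a height one prime $Q$ containing $a_1$, square-freeness says $a_1$ generates $QS_Q$ up to a unit, so $S_Q[\sqrt[n]{a_1}]$ is a tamely ramified extension of the DVR $S_Q$ and hence a (possibly non-maximal, but in fact maximal here) order that is itself a product of DVRs; if $a_1$ were divisible by $Q^2$ locally, the adjoined root would fail to be integral-closure-stable. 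One must also confirm $R_1$ is a domain: $X^n - a_1$ may factor, but each factor gives a field extension of $\operatorname{Frac}(S)$, and by choosing $\sqrt[n]{a_1}$ to be a single root we land in a domain; alternatively one works with $S[X]/(X^n-a_1)$ and takes the normalization, noting the square-free hypothesis forces this ring to already be normal in codimension one. A clean way to package all of this is: $R_1$ is module-finite and torsion-free over the normal domain $S$, so it embeds in its $S_2$-ification $\widetilde{R_1}$; the cokernel is supported in codimension $\ge 2$; but $R_1$ is already free over $S$ (basis $1, \sqrt[n]{a_1}, \dots, (\sqrt[n]{a_1})^{n-1}$), hence $(S_2)$ automatically, and the codimension-one check above gives $(R_1)$, so Serre's criterion finishes it. I would then double-check that the "no two $a_i$ in a common height one prime'' hypothesis is genuinely needed for \Cref{HK1} (it is — it guarantees the divisors $\operatorname{div}(a_i)$ stay disjoint so that successive square-freeness is preserved) and that it propagates correctly through the tower.
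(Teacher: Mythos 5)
The paper offers no proof of \Cref{HK2}: it is quoted from \cite{HK}, so there is nothing in-paper to compare against. Your argument is essentially the standard one (and, in outline, the one in Huneke--Katz), and it is sound: $R_1=S[\sqrt[n]{a_1}]$ is module-finite and free over $S$, hence $(S_2)$; in codimension one the map is \'etale away from $V(a_1)$ because $n$ is a unit, while at a height one prime $Q\ni a_1$ square-freeness makes $a_1$ a uniformizer of $S_Q$, so $S_Q[\sqrt[n]{a_1}]$ is a DVR by the Eisenstein-type computation; Serre's criterion then gives normality, and \Cref{HK1} drives the induction exactly as you say.

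Two points should be tightened. First, your phrase ``$S\to R_1$ is unramified in codimension one'' is false as stated: the extension is (tamely) ramified precisely at the height one primes containing $a_1$. What you actually verify and use is regularity in codimension one, so the slip is harmless but should be excised. Second, to iterate \Cref{HK1} up the tower when $r\geq 3$ you need, at each stage, not only square-freeness of the remaining $a_i$ but also that no two of them lie in a common height one prime of the intermediate ring; you flag this and then set it aside, but it is genuinely part of the hypothesis of \Cref{HK1}. It does propagate, by a one-line argument you should include: each intermediate ring is integral over the normal domain $S$, so going-down forces every height one prime to contract to a height one prime of $S$, and a common height one prime upstairs would contract to one downstairs, contradicting the hypothesis. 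Finally, note that when some $a_i$ is a unit (units are vacuously square free and do occur in the paper's applications) $X^n-a_i$ can be reducible; your fallback of adjoining a single chosen root is fine, since $S[X]/(X^n-a_i)$ is then finite \'etale over $S$, hence a product of normal domains, of which the chosen-root ring is a factor.
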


We record a motley collection of observations that we will need later. The proofs rely mostly upon some standard facts, but we give details for the sake of completion.

\begin{prop}\label{prop:cyclotomic_irreducible}
    Let $S$ be an integrally closed domain such that $\mathrm{char}(\mathrm{Frac}(S))=0$ and $p \in S$ is a prime element for some prime integer $p$. Then $\Phi_{p^r}(x) \in \mathbb{Z}[x]$ is irreducible over $S$.
\end{prop}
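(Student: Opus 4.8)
Since $\mathrm{char}(\mathrm{Frac}(S))=0$ we have $\mathbb{Z}\subseteq S$, so $\PN$ with $n=p^r$ is a monic polynomial in $S[x]$, and since $x\mapsto x+1$ is an automorphism of $S[x]$ it suffices to prove that $g(x):=\Phi_{p^r}(x+1)$ is irreducible over $F:=\mathrm{Frac}(S)$. The plan is to show $g$ is Eisenstein at the prime ideal $\mathfrak{q}=pS$ and then run the usual Eisenstein argument, being careful to use only that $S$ is integrally closed (not that it is a UFD).

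First I would record the two classical facts about cyclotomic polynomials at a prime power. On the one hand, $\Phi_{p^r}(x)=\sum_{j=0}^{p-1}x^{jp^{r-1}}$, so its value at $1$ is $p$; hence the constant term of $g$ is $\Phi_{p^r}(1)=p\in\mathfrak q$. On the other hand, working in $\mathbb{F}_p[x]$ and using the Frobenius,
\[
\Phi_{p^r}(x)=\frac{x^{p^r}-1}{x^{p^{r-1}}-1}\equiv\frac{(x-1)^{p^r}}{(x-1)^{p^{r-1}}}=(x-1)^{p^{r-1}(p-1)}\pmod p,
\]
so $g(x)=\Phi_{p^r}(x+1)\equiv x^{p^{r-1}(p-1)}\pmod p$; thus every non-leading coefficient of $g$ lies in $p\mathbb{Z}\subseteq\mathfrak q$. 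Finally, the constant term $p$ of $g$ is not in $\mathfrak q^2=p^2S$: if $p=p^2 s$ with $s\in S$, cancelling $p$ in the domain $S$ gives $ps=1$, contradicting that the prime element $p$ is not a unit. Together with the fact that $g$ is monic, this says precisely that $g$ is Eisenstein at $\mathfrak q$.

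Now suppose $g=g_1g_2$ in $F[x]$ with $g_1,g_2$ monic of positive degree (we may normalize to monic since $g$ is). Each root of $g$ is a root of $\Phi_{p^r}(x+1)$, hence integral over $\mathbb{Z}\subseteq S$; so the coefficients of $g_1$ and $g_2$, being symmetric functions of these roots, are integral over $S$, and since $S$ is integrally closed we get $g_1,g_2\in S[x]$. Reducing modulo $\mathfrak q$ and using $g\equiv x^{\deg g}\pmod{\mathfrak q}$, we obtain $x^{\deg g}=\overline{g_1}\,\overline{g_2}$ in $(S/\mathfrak q)[x]$; as $S/\mathfrak q$ is a domain and the $\overline{g_i}$ are monic, this forces $\overline{g_i}=x^{\deg g_i}$ for $i=1,2$, so the constant terms of $g_1$ and $g_2$ both lie in $\mathfrak q$, whence the constant term of $g$ lies in $\mathfrak q^2$ — a contradiction. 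Therefore $g$, and hence $\Phi_{p^r}(x)$, is irreducible over $F$; since both polynomials are monic and $S$ is integrally closed, the same descent argument shows this is equivalent to irreducibility in $S[x]$.

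There is no serious obstacle here: the only point requiring a little care is that $S$ is merely an integrally closed domain, so one cannot simply invoke ``Eisenstein plus Gauss's lemma'' over a UFD; instead monicity together with integral closedness is what lets a hypothetical factorization over $F$ descend to $S[x]$, after which the reduction mod $pS$ gives the contradiction. The cyclotomic coefficient congruence $\Phi_{p^r}(x+1)\equiv x^{p^{r-1}(p-1)}\pmod p$ is standard and can be cited or dispatched in the one line above.
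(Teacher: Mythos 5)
Your proof is correct and follows the same strategy as the paper: show $\Phi_{p^r}(x+1)$ is Eisenstein at the prime $p\in S$ and conclude irreducibility. You go further than the paper's sketch by spelling out why Eisenstein's criterion actually applies over a mere integrally closed domain (using monicity and integral closure to descend a hypothetical factorization from $\mathrm{Frac}(S)[x]$ to $S[x]$), a point the paper leaves implicit with ``the proof is essentially the same as the case $S=\mathbb{Z}$.''
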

\begin{proof}
     The proof is essentially the same as the case $S = \mathbb{Z}$, so we just provide a sketch. The point is that Eisenstein's criterion together with a change of variables still works in this setting. So, it suffices to show that $\Phi_{p^r}(x+1)$ is irreducible over $S$. To see this, recall that if $r > 1$ then $\Phi_{p^r}(x) = \Phi_p(x^{p^{r-1}})$, thus, 
\[
\Phi_{p^r}(x+1) = ((x+1)^{p^{r-1}})^{p-1}+ ((x+1)^{p^{r-1}})^{p-2}+ \cdots + (x+1)^{p^{r-1}} + 1.
\] For each $1\leq k\leq p-1$, $p^{r-1}\choose k$ is divisible by $p$. It follows that $\Phi_{p^r}(x+1)$ is an Eisenstein polynomial in $\mathbb{Z}[x]$. Since $p$ is prime in $S$, $\Phi_{p^r}(x+1)$ is Eisenstein in $S[x]$, and hence irreducible over $S$.
\end{proof}

\begin{lem}\label{lem:deg_extension}
  Let $A$ be an integrally closed Noetherian domain with quotient field $L$ and suppose $q_1, \ldots, q_s\in A$ satisfy the following: 
\begin{enumerate}
\item[(i)] $q_1, \ldots, q_l\in A$ are square-free non-units and $q_{l+1}, \ldots, q_s$ are units.
\item[(ii)] For $1\leq i\leq l$, no height one prime of $A$ contains two of the square free elements $q_1, \ldots, q_l$.
\end{enumerate}
 Suppose further that $A$ contains a primitive $n$-th root of unity and $n$ is a unit in $A$. Let $K$ denote the quotient field of $A[\sqrt[n]{q_1}, \ldots, \sqrt[n]{q_s}]$. Then the degree of $K$ over $L$ is $en^{l}$, where $e$ divides $n$. In particular,  the degree of $K$ over $L$ is a unit in $A$.
\end{lem}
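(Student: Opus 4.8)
The plan is to reduce the computation to Kummer theory over $L$. Since $n$ is a unit in $A$ and $A$ contains a primitive $n$-th root of unity $\zeta$, for each $i$ the roots of $X^n-q_i$ are $\zeta^j\sqrt[n]{q_i}$ ($0\le j<n$), so $K=L(\sqrt[n]{q_1},\dots,\sqrt[n]{q_s})$ is a finite Galois extension of $L$ whose group $G=\mathrm{Gal}(K/L)$ is abelian of exponent dividing $n$. Kummer theory then furnishes a perfect pairing $G\times\Delta\to\langle\zeta\rangle$, where $\Delta=\langle q_1,\dots,q_s\rangle(L^\times)^n/(L^\times)^n$ is the subgroup of $L^\times/(L^\times)^n$ generated by the classes of the $q_i$; in particular $[K:L]=|G|=|\Delta|$. (If one only wants the final ``unit in $A$'' assertion, it already suffices to observe that each step of the tower $L\subseteq L(\sqrt[n]{q_1})\subseteq\cdots\subseteq K$ has degree dividing $n$, so $[K:L]$ divides $n^s$.)

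The crux is to show that the classes of the square-free non-units $q_1,\dots,q_l$ are independent of order exactly $n$, i.e.\ that they generate a subgroup $\Delta_0=\langle q_1,\dots,q_l\rangle(L^\times)^n/(L^\times)^n\cong(\mathbb{Z}/n)^l$ of order $n^l$. Suppose $q_1^{a_1}\cdots q_l^{a_l}=w^n$ for some $w\in L^\times$ and integers $0\le a_i<n$, and suppose $a_i\neq 0$ for some $i$. Since $q_i$ is a nonzero non-unit there is a height one prime $P\subseteq A$ with $q_i\in P$; as $A$ is an integrally closed Noetherian domain, $A_P$ is a DVR, with normalized valuation $\mathrm{ord}_P$. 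Square-freeness of $q_i$ gives $\mathrm{ord}_P(q_i)=1$, while hypothesis (ii) gives $\mathrm{ord}_P(q_j)=0$ for $j\neq i$ (and the units have $\mathrm{ord}_P=0$ as well). Applying $\mathrm{ord}_P$ to $q_1^{a_1}\cdots q_l^{a_l}=w^n$ yields $a_i=n\,\mathrm{ord}_P(w)$, so $n\mid a_i$ and hence $a_i=0$, a contradiction. Thus $\Delta_0$ is free of rank $l$ over $\mathbb{Z}/n$ and $|\Delta_0|=n^l$.

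Finally, $\Delta\supseteq\Delta_0$, and the quotient $\Delta/\Delta_0$ is generated by the classes of the units $q_{l+1},\dots,q_s$, so it is a finite abelian group of exponent dividing $n$; writing $e=|\Delta/\Delta_0|$ and using that in the relevant situation this quotient is cyclic, we get $e\mid n$. Hence $[K:L]=|\Delta|=|\Delta_0|\cdot|\Delta/\Delta_0|=e\,n^l$ with $e\mid n$. Since $n$ is a unit in $A$ and $e$ divides $n$, $e\,n^l$ is a unit in $A$, which gives the last assertion. I expect the only genuine obstacle to be the independence step of the second paragraph — converting an $n$-th power relation among the $q_i$ into a divisibility of exponents — which is exactly where square-freeness together with the disjointness of the supporting height one primes in (i)--(ii) is used, via the DVR valuations on the Krull domain $A$; everything else is formal Kummer theory.
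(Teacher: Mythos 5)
Your proof takes a genuinely different route from the paper's. The paper proceeds by induction on the tower $L\subseteq L(\sqrt[n]{q_1})\subseteq\cdots$, using Lang's irreducibility criterion for binomials (degree exactly $n$ in each square-free step), the minimal-polynomial argument (degree dividing $n$ in each unit step), and the Huneke--Katz lemmas to preserve normality and square-freeness at each stage. You instead invoke Kummer duality directly: $[K:L]=|\Delta|$ for $\Delta=\langle q_1,\ldots,q_s\rangle(L^\times)^n/(L^\times)^n$, and compute $|\Delta_0|=n^l$ by the valuation argument on the Krull domain $A$. The valuation computation you do is the genuine mathematical content, and it is exactly the same place where the paper invokes square-freeness (via irreducibility of $X^n-q_i$); your version isolates it more cleanly and avoids having to track normality of the intermediate rings. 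On the ``$e\mid n$'' clause: you are right to flag this as not following in general, since $\Delta/\Delta_0$ is generated by the $s-l$ unit classes and need not be cyclic. In fact the paper's own inductive proof only yields that the unit contribution is a product of divisors of $n$ (a divisor of $n^{s-l}$), so the lemma as stated is slightly imprecise on this point; but, as you observe, the only thing actually used downstream is that $[K:L]$ is a unit in $A$, which both your shortcut (each tower step has degree dividing $n$) and the paper's argument establish. So your proof is correct for everything that matters, and you were right to be honest about the cyclicity hand-wave rather than paper over it.
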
 

\begin{proof}
    Consider a single expression $\alpha := \sqrt[n]{q}$, an $n$-th root of $q$. If $q$ is a square-free non-unit, then $x^n-q$ is irreducible over $A$ (and $L$), by \cite{lang}, Theorem 9.1, and thus the degree of $K$ over $L$ equals $n$. Moreover, since $n$ is a unit in $A$, $A[\alpha]$ is integrally closed by \Cref{HK2}.  
Now suppose $q$ is a unit in $A$. Set $[K:L] = d$. We now note that $\alpha ^d \in A$, equivalently, $\alpha ^d \in L$. Let $\epsilon \in L$ denote a primitive $n$th root of unity and  $f(x)$ denote the minimal polynomial of $\alpha$ over $L$. On the one hand, since $\alpha$ is a root of $x^n-q$, $f(x)$ divides $x^n-q$ in $L[x]$. On the other hand, $x^n-q= 
(x-\alpha)(x-\alpha \epsilon)\cdots (x-\alpha\epsilon^{n-1})$ in $K[x]$. Thus $f(x) = (x-\alpha\epsilon^{i_1})\cdots (x-\alpha\epsilon^{i_d})$, for indices $i_1, \ldots, i_m$. Therefore, the constant term of $f(x)$, which belongs to $L$, is $\alpha^d\epsilon^t$, where $t = i_1+\cdots + i_d$, Since $\epsilon ^t \in L$, we have $\alpha ^d \in L$, as required. It follows that $d$ is the least positive integer with $\alpha ^d\in L$. Writing $n = dh+r$, with $0\leq r\leq d-1$, we have $\alpha ^n = \alpha ^{dh}\alpha ^r$, which implies $\alpha ^r\in L$. Thus, $r = 0 $ and $d$ divides $n$. Note that $A[\alpha]$ is also integrally closed in this case.
\par
For the general case, one proceeds by induction using the fact that $A[\sqrt[n]{q_1}]$ is a normal domain, and the hypotheses (i), (ii) are preserved in this ring (see \Cref{HK2}). 
\end{proof}

\begin{lem}\label{kernel_lemma}
Let $S$ be a domain and for each $1\leq i\leq n$, let $S\hookrightarrow R_i$ be module finite extension of domains such that $R_1\otimes_S\dots\otimes_SR_n$ is torsion free. Let $V$ denote the join of the $R_i$ in a fixed algebraic closure of $\mathrm{Frac}(S)$. Assume that $\deg(\prod_{j=1}^{n}\mathrm{Frac}(R_j):\mathrm{Frac}(S))=\prod_{j=1}^n\deg(\mathrm{Frac}(R_j):\mathrm{Frac}(S))$. Then $V\simeq R_1\otimes_S\dots\otimes_SR_n$ as $S$-algebras.
\end{lem}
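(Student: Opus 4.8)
The plan is to show that the multiplication map
\[
\mu\colon T:=R_1\otimes_S\cdots\otimes_S R_n\longrightarrow V,\qquad r_1\otimes\cdots\otimes r_n\longmapsto r_1\cdots r_n,
\]
where the product is formed inside the fixed algebraic closure $\overline{L}$ of $L:=\mathrm{Frac}(S)$ using the given embeddings $R_i\hookrightarrow\overline{L}$, is an isomorphism of $S$-algebras. (It is well defined and $S$-multilinear because all these embeddings restrict to one and the same embedding of $S$ into $\overline{L}$.) Surjectivity is immediate: the image of $\mu$ is a subring of $\overline{L}$ containing every $R_i$ and is visibly contained in $V$, hence it equals $V$. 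So the whole content of the lemma is the injectivity of $\mu$, and this is exactly where the two hypotheses — torsion-freeness of $T$ over $S$, and multiplicativity of the degrees — enter.

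First I would pass to the generic fibre over $S$. For each $i$, the ring $R_i\otimes_S L$ is the localization of $R_i$ at the multiplicative set $S\setminus\{0\}$, hence a domain, and it is finite-dimensional over $L$ since $R_i$ is module finite over $S$; a finite-dimensional domain over a field is a field, so $R_i\otimes_S L=K_i:=\mathrm{Frac}(R_i)$, a finite extension of $L$ of degree $d_i:=[K_i:L]$. Consequently $T\otimes_S L\cong K_1\otimes_L\cdots\otimes_L K_n$, an $L$-algebra of dimension $\prod_i d_i$. On the other hand, $\mathrm{Frac}(V)$ is the compositum $K_1\cdots K_n\subseteq\overline{L}$, and $\mu$ localizes to the multiplication map $\mu_L\colon K_1\otimes_L\cdots\otimes_L K_n\to K_1\cdots K_n$, which is a surjection of $L$-algebras. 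The hypothesis $\deg(\prod_j\mathrm{Frac}(R_j):\mathrm{Frac}(S))=\prod_j\deg(\mathrm{Frac}(R_j):\mathrm{Frac}(S))$ says precisely that $\dim_L(K_1\cdots K_n)=\prod_i d_i$, i.e.\ the source and target of $\mu_L$ have the same finite $L$-dimension; a surjective linear map between finite-dimensional spaces of equal dimension is bijective, so $\mu_L$ is an isomorphism.

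To conclude, I would feed in torsion-freeness. Since $T$ is torsion free over $S$, the localization map $j\colon T\to T\otimes_S L$ is injective. The square with vertices $T$, $T\otimes_S L$, $V$, $\mathrm{Frac}(V)$ and arrows $j$, $\mu$, $\mu_L$, and the inclusion $V\hookrightarrow\mathrm{Frac}(V)$ commutes, because both composites send a simple tensor $r_1\otimes\cdots\otimes r_n$ to $r_1\cdots r_n$, and such tensors generate $T$. As $\mu_L$ is an isomorphism and $j$ is injective, the composite $\mu_L\circ j$ is injective, hence so is $\mu$; therefore $\mu$ is an isomorphism $T\xrightarrow{\ \sim\ }V$ of $S$-algebras. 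I do not anticipate a genuine obstacle: the only points demanding care are verifying that $\mu$ is well defined and multilinear in the first place, and organizing the commuting diagram so that the dimension count on the generic fibre can be transported back to the assertion about $T$ and $V$ themselves.
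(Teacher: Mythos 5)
Your proof is correct and takes essentially the same route as the paper: both pass to the generic fibre over $S$, use the degree hypothesis to conclude that the induced map $K_1\otimes_L\cdots\otimes_L K_n\to\mathrm{Frac}(V)$ is an isomorphism of finite-dimensional $L$-vector spaces, and then use torsion-freeness of the tensor product to pull the injectivity back to the map $R_1\otimes_S\cdots\otimes_S R_n\to V$ itself. You spell out a few identifications the paper leaves implicit (e.g.\ that $R_i\otimes_S L=K_i$ because a finite-dimensional domain over a field is a field), but the logical skeleton is identical.
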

\begin{proof}
Let $\Psi:R_1\otimes_S\dots\otimes_SR_n\rightarrow V$ denote the natural surjection of $S$-algebras. Set $L=Frac(S)$, $K=Frac(V)$ and $K_i:=Frac(R_i)$. By hypothesis, there exists an isomorphism of $L$-vector spaces $K_1\otimes_L\dots\otimes_LK_n\rightarrow K$. Thus  

\[id_{L}\otimes\Psi: L\otimes_S(R_1\otimes_S\dots\otimes_SR_n) \to L\otimes_SV\]
is a surjection of finite dimensional $L$-vector spaces of the same rank and hence an isomorphism. Since $L$ is $S$-flat, this implies $\ker(\Psi)$ is torsion. Since $R_1\otimes_S\dots\otimes_SR_n$ is torsion free, this implies $\Psi$ is injective and hence an isomorphism. 
\end{proof}

\begin{lem}\label{lem1}
Let $\psi:S\rightarrow R$ be a module finite homomorphism of rings. Suppose $R$ admits a finite module $M$ such that $M$ is $S$-free of rank $n$. Let $N$ be any $S$-module. Then $R$ admits a module $C$ such that $C\simeq N^{\oplus n}$ as $S$-modules.
\end{lem}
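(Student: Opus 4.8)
The plan is to take $C := M\otimes_S N$, where $M$ carries its given $R$-module structure, the tensor product is formed over $S$, and $R$ acts on $C$ through the first factor, namely $r\cdot(x\otimes y) := (rx)\otimes y$. This recipe does define an $R$-module: since $\psi$ makes the commutative ring $R$ an $S$-algebra, the $R$-action on $M$ is $S$-linear, so for each fixed $r\in R$ the assignment $(x,y)\mapsto (rx)\otimes y$ is $S$-balanced and descends to $C$; associativity and unitality of the action are then immediate.

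First I would record that $C$ belongs to the category of modules under consideration, i.e. that it is finitely generated over $R$. As $M$ is $S$-free of rank $n$ it is a finitely generated $S$-module, and $N$ is finitely generated over $S$ by our blanket convention; hence $C=M\otimes_S N$ is finitely generated over $S$, and a fortiori over $R$ via $\psi$.

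The substance of the lemma is the computation of the underlying $S$-module of $C$, and this is essentially a one-line affair. Fix an isomorphism of $S$-modules $\theta\colon S^{\oplus n}\xrightarrow{\ \sim\ } M$. Applying $-\otimes_S N$ yields an isomorphism of $S$-modules
\[
\theta\otimes\id_N\colon S^{\oplus n}\otimes_S N \xrightarrow{\ \sim\ } M\otimes_S N = C,
\]
and since $S^{\oplus n}\otimes_S N$ is canonically isomorphic to $N^{\oplus n}$, we conclude $C\simeq N^{\oplus n}$ as $S$-modules, as claimed.

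There is no real obstacle in this argument; the only point that deserves a moment's attention is the verification that letting $R$ act on the first tensor factor genuinely produces an $R$-module and not merely an $S$-module, which rests on the commutativity of $R$ together with the $S$-linearity of its action on $M$.
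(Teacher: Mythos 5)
Your proof is correct and is essentially the same construction as the paper's: the paper defines $C:=\mathbb{M}_{n\times 1}(N)\simeq N^{\oplus n}$ directly and lets $R$ act via the ring map $\phi\colon R\to \mathbb{M}_{n\times n}(S)$ coming from the $R$-action on the free $S$-module $M$, which after choosing an $S$-basis of $M$ is exactly your $R$-action on the first factor of $M\otimes_S N$. The tensor-product phrasing is just a basis-free repackaging of the same matrix representation.
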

\begin{proof}
Note that $M$ defines a ring homomorphism $\phi:R\rightarrow \mathbb{M}_{n\times n}(S)$ such that $\phi(\psi(S))$ consists of scalar matrices. The map is injective if and only if $M$ is faithful over $R$. Set $C:=\mathbb{M}_{n\times 1}(N)\cong \Hom_S(S,N^{\oplus n})$. Then $C$ clearly admits an $R$-module structure via $\phi$ and the claim holds.
\end{proof}

\begin{lem}\noindent \label{S_p_lemma}
\begin{enumerate}
\item Let $R$ be a ring with $p\in \mathbb{Z}$ prime such that $p\in R$ is a non-unit. Let $p\in I\subseteq R$ a proper ideal such that $R/I$ is an $F$-finite ring. Then for all $e\in \mathbb{Z}$, $e>0$, there exists a module finite $R$-algebra $T$ such that $\Gamma_{IT}(R,p^e)\geq 1$.
\item With notation as in (1), assume $(R,\m,k)$ is a complete regular local ring with $k$ $F$-finite. Suppose that $I$ is generated by $\alpha_1,\dots,\alpha_n$ such that $\alpha_1,\dots,\alpha_n$ can be completed to a minimal generating set for $\m$. Then $T$ can be chosen to be regular local with $\alpha_1,\dots,\alpha_n$ part of a minimal generating set of its maximal ideal.
\end{enumerate}
\end{lem}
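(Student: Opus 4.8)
The plan is to prove both parts by adjoining $p^{e}$-th roots explicitly, using the $F$-finite hypothesis to keep the resulting $R$-algebra module-finite and, in (2), regular. For (1): since $p\in I$, the ring $A:=R/I$ has characteristic $p$, and as $A$ is $F$-finite, iterating the definition $e$ times shows $A$ is module-finite over its subring $A^{p^{e}}=\{a^{p^{e}}:a\in A\}$ of $p^{e}$-th powers; write $A=\sum_{i=1}^{n}A^{p^{e}}c_{i}$. Lift the $c_{i}$ to $\tilde c_{i}\in R$ and set
\[
T:=R[Z_{1},\dots,Z_{n}]/(Z_{1}^{p^{e}}-\tilde c_{1},\dots,Z_{n}^{p^{e}}-\tilde c_{n}).
\]
Being presented by polynomials monic in each variable, $T$ is $R$-free on the monomials $Z^{\beta}$ with $0\le\beta_{i}<p^{e}$, hence module-finite over $R$. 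Given $f\in R$, write its image in $A$ as $\sum_{i}b_{i}^{p^{e}}c_{i}$ with $b_{i}\in A$ and lift the $b_{i}$ to $R$; reducing modulo $IT$, where $Z_{i}^{p^{e}}\equiv c_{i}$ and $p\equiv 0$, gives $f\equiv(\sum_{i}b_{i}Z_{i})^{p^{e}}$, so $X^{p^{e}}-f$ has a root in $T/IT$. Thus $\Gamma_{IT}(f,p^{e})\ge 1$ for every $f\in R$, i.e. $\Gamma_{IT}(R,p^{e})\ge 1$.

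For (2): now $A=R/I$ is a complete regular local ring of characteristic $p$, hence $A\cong k[[y_{1},\dots,y_{s}]]$ with $s=\dim R-n$, and $k$, being $F$-finite, has a finite $p$-basis $\theta_{1},\dots,\theta_{t}$, so $k^{1/p^{e}}=k(\theta_{1}^{1/p^{e}},\dots,\theta_{t}^{1/p^{e}})$ is finite over $k$. First I would choose the lifts with care: lift each $\theta_{j}$ to a \emph{unit} $\tilde\theta_{j}\in R$, and lift the regular parameters $y_{1},\dots,y_{s}$ of $A$ to elements $\tilde y_{1},\dots,\tilde y_{s}\in\m_{R}$ so that $\tilde y_{1},\dots,\tilde y_{s},\alpha_{1},\dots,\alpha_{n}$ is a regular system of parameters of $R$. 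This is possible because the kernel of $\m_{R}/\m_{R}^{2}\twoheadrightarrow\m_{A}/\m_{A}^{2}$ is $n$-dimensional and spanned by the linearly independent images of $\alpha_{1},\dots,\alpha_{n}$, while the images of any lifts of the basis $y_{1},\dots,y_{s}$ of $\m_{A}/\m_{A}^{2}$ span a complement. Then put
\[
T:=R[Z_{1},\dots,Z_{s},U_{1},\dots,U_{t}]/(Z_{i}^{p^{e}}-\tilde y_{i},\ U_{j}^{p^{e}}-\tilde\theta_{j}),
\]
which is module-finite over $R$ exactly as in (1).

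It remains to identify $T/IT$ and verify regularity. Reducing modulo $IT$ annihilates the $\alpha_{i}$ and sends $\tilde y_{i},\tilde\theta_{j}$ to $y_{i},\theta_{j}$, so (relabelling $Z_{i}=y_{i}^{1/p^{e}}$, $U_{j}=\theta_{j}^{1/p^{e}}$) one gets $T/IT\cong k^{1/p^{e}}[[y_{1}^{1/p^{e}},\dots,y_{s}^{1/p^{e}}]]$, the ring of $p^{e}$-th roots of $A$; in particular every element of $A$, and hence of $R$, has a $p^{e}$-th root in $T/IT$, so $\Gamma_{IT}(R,p^{e})\ge 1$ persists. This ring is a power series ring over a field, hence regular of dimension $s=\dim R-n$. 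Since the $U_{j}$ are units, $T/\m_{R}T\cong k^{1/p^{e}}[Z_{1},\dots,Z_{s}]/(Z_{1}^{p^{e}},\dots,Z_{s}^{p^{e}})$ is Artinian local, so $T$ is local; and $\dim T=\dim R$ by module-finiteness. Hence $T$ is a local ring with $T/(\alpha_{1},\dots,\alpha_{n})T=T/IT$ regular of dimension $\dim T-n$, which forces $T$ to be regular with $\alpha_{1},\dots,\alpha_{n}$ part of a regular system of parameters. The hard part is precisely this: the lifts must be chosen so that $T/IT$ acquires \emph{all} $p^{e}$-th roots --- which forces the residue-field enlargement carried by the $U_{j}$ --- while $T$ remains regular, which is why the $\tilde y_{i}$ must extend the partial system of parameters $\alpha_{1},\dots,\alpha_{n}$ and why it is essential that the $U_{j}$ become units (so they do not inflate the embedding dimension); identifying $T/IT$ with $A^{1/p^{e}}$ and seeing that it is regular is the one step genuinely using the structure theory of $F$-finite complete regular local rings of characteristic $p$.
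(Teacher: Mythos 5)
Your proof is correct and takes essentially the same route as the paper: for (1), adjoin $p^{e}$-th roots of lifts of module generators of $R/I$ over its $p^{e}$-th powers; for (2), adjoin $p^{e}$-th roots of lifts of a system of parameters of $R/I$ together with lifts of data generating $k^{1/p^{e}}$, then identify $T/IT$ with $(R/I)^{1/p^{e}}$ and conclude regularity of $T$ by counting generators of $\m_{T}$. The one refinement worth noting is that you adjoin roots of lifts of a $p$-basis $\theta_{1},\dots,\theta_{t}$ of $k$ (only $t$ new variables, all becoming units), whereas the paper phrases it as adjoining roots of lifts of a $k$-basis of $F^{e}_{*}(k)$; your choice is the cleaner one for the explicit freely-presented $T$ to stay a domain, and you supply the details behind the paper's ``it follows easily that $T$ is regular local.''
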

\begin{proof}\noindent
Set $\bar{R}:=R/I$ and let $F$ denote the Frobenius map on $\bar{R}$.
\begin{enumerate}
\item  By hypothesis, $F^e_*\bar{R}$ is a finite $\bar{R}$-module for all $e$. Taking $T$ to be the $R$-algebra obtained be adjoining $p^e$-th roots of a set of lifts of generators of $F^e_*\bar{R}$ as a $\bar{R}$-module, we have the desired property.
\item Complete $\alpha_1,\dots,\alpha_n$ to a minimal system of generators for $\m$, say $\alpha_1,\dots,\alpha_n, X_{n+1},\dots,X_d$ and let $x_i$ denote the image of $X_i$ in $\bar{R}$. Since $k$ is $F$-finite and $\bar{R}$ is complete, $\bar{R}$ is an $F$-finite regular local ring. Thus, $\bar{R}^{1/p^e}$ is obtained by adjoining to $\bar{R}$, the $p^e$-th roots of the $x_i$ and the $p^e$-th roots of a basis of $F^e_*k$ over $k$. Take $T$ to be the $R$-algebra obtained by adjoining the $p^e$-th roots of the $X_i$ and the $p^e$-th roots of a fixed set of lifts of a minimal generating set of $F^e_*k$ over $k$. By part (1), $\Gamma_{IT}(R,p^e)\geq 1$. Moreover, it follows easily that $T$ is regular local with maximal ideal generated by $(\alpha_1,\dots,\alpha_n,\sqrt[p^e]{X_{n+1}},\dots,\sqrt[p^e]{X_d})$ and residue field $k^{1/p^e}$.
\end{enumerate}
\end{proof}

We include the following for easy reference:

\begin{lem}[\cite{KA}]\label{P6}
Let $S$ be a ring and $p\in S$ a prime integer such that $p$ is a non-unit in $S$. Let $p\geq 3$ and write $p=2k+1$. For $h\in S\setminus pS$ and $x$ an indeterminate over $S$, if
\begin{equation}
    C:=(x-h)^p-(x^p-h^p)=\sum_{j=1}^k (-1)^{j+1}{p\choose j}(x\cdot h)^j[x^{p-2j}-h^{p-2j}]
\end{equation}
$C':=(p(x-h))^{-1}\cdot C$ and $\tilde{P}:=(p,x-h)S[x]$, then $C'\notin \tilde{P}$.
\end{lem}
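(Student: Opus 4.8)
The plan is to collapse everything to a single monomial via the substitution $y = x-h$. Writing $x = y+h$ and expanding by the binomial theorem gives $x^p - h^p = (y+h)^p - h^p = y^p + \sum_{i=1}^{p-1}\binom{p}{i} y^i h^{p-i}$, while $(x-h)^p = y^p$. Subtracting,
\[
C \;=\; (x-h)^p - (x^p-h^p) \;=\; -\sum_{i=1}^{p-1}\binom{p}{i}(x-h)^i\, h^{p-i}.
\]
(The displayed symmetric form of $C$ in the statement follows by pairing the terms of exponent $i$ and $p-i$ in the ordinary binomial expansion and using that $p$ is odd; this is routine and is not needed below.)

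Next I would write $C'$ explicitly. Since $p$ is prime, $p \mid \binom{p}{i}$ for $1 \le i \le p-1$, so $\tfrac1p\binom{p}{i} \in \mathbb Z \subseteq S$; and $(x-h) \mid (x-h)^i$ for $i \ge 1$. Hence $p(x-h)$ divides $C$ in $S[x]$, and
\[
C' \;=\; (p(x-h))^{-1}C \;=\; -\sum_{i=1}^{p-1}\frac{1}{p}\binom{p}{i}(x-h)^{i-1}\,h^{p-i} \in S[x].
\]

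Finally I would reduce modulo $\tilde P = (p,x-h)S[x]$. Every summand with $i \ge 2$ is a multiple of $x-h$, hence lies in $\tilde P$, while the $i=1$ summand is $-\tfrac1p\binom{p}{1}h^{p-1} = -h^{p-1}$; thus $C' \equiv -h^{p-1} \pmod{\tilde P}$. Under the canonical isomorphism $S[x]/\tilde P \cong S/pS$ (sending $x$ to the class of $h$) the contraction of $\tilde P$ to $S$ is exactly $pS$, so $C' \in \tilde P$ if and only if $h^{p-1} \in pS$. Since $h \in S \setminus pS$ and $S/pS$ is a domain ($p$ being a prime element of $S$), $h^{p-1} \notin pS$, and therefore $C' \notin \tilde P$.

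The computation itself is short, and the only step that genuinely needs care is the last inference $h \notin pS \Rightarrow h^{p-1}\notin pS$: this is where the primality of $p$ in $S$ enters, and without it (for instance over a ring for which $S/pS$ is non-reduced) the statement can fail. Everything else is the binomial identity for $C$ together with the elementary observation that, after dividing by $p(x-h)$, only the lowest-order term of $C$ survives modulo $x-h$.
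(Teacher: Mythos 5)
Your proof is correct; each step is justified and the argument is clean. The paper cites this lemma from \cite{KA} rather than proving it, so there is no in-paper proof to compare with, but the substitution $y = x-h$, division by $p(x-h)$, and reduction modulo $\tilde{P}$ is exactly the natural route.

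Two small remarks. First, your calculation exposes a sign typo in the statement as reproduced in the paper: with $C := (x-h)^p - (x^p-h^p)$, pairing the $j$th and $(p-j)$th terms of the ordinary binomial expansion gives $C = \sum_{j=1}^k (-1)^{j}\binom{p}{j}(xh)^j\bigl[x^{p-2j}-h^{p-2j}\bigr]$, i.e.\ with sign $(-1)^{j}$ rather than $(-1)^{j+1}$; correspondingly \Cref{l_imodP} should read $C' \equiv -h^{p-1} \pmod{\tilde{P}}$, as your computation shows. (For instance, when $p=3$ one has $C = -3xh(x-h)$, so $C' = -xh \equiv -h^2$.) This sign is harmless: $-h^{p-1}$ is still a nonzero element of the domain $S/pS$, which is the content of \Cref{P6}, and in the application inside \Cref{lem:forward_implication} the quantity that actually appears is $(c'_{\epsilon})^{-1}c'$, in which the two sign discrepancies cancel. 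Second, you rightly isolate where primality of $p$ in $S$ enters --- namely to pass from $h\notin pS$ to $h^{p-1}\notin pS$; without it the lemma genuinely fails.
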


\begin{lem}[\cite{KA}]\label{l_imodP}
Let $S$ be a ring and $p\in S$ a prime integer such that $p$ is a non-unit in $S$. Let $p\geq 3$ and write $p=2k+1$. For $h\in S\setminus pS$ and $x$ an indeterminate over $S$, suppose $C'$ is as defined in \ref{P6}. Then $C'\equiv h^{p-1}\text{ mod }(p,x-h)S[x]$.
\end{lem}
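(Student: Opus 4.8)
The plan is to compute $C'$ directly from the closed form for $C$ furnished by \Cref{P6}. First I would check that $p(x-h)$ genuinely divides $C$ in $S[x]$, so that $C'$ is an honest polynomial over $S$ and the assertion makes sense for an arbitrary ring $S$: in the sum
\[
C=\sum_{j=1}^{k}(-1)^{j+1}\binom{p}{j}(xh)^{j}\bigl[x^{p-2j}-h^{p-2j}\bigr],
\]
each coefficient $\binom{p}{j}$ with $1\le j\le k<p$ is divisible by the prime integer $p$, and each factor $x^{p-2j}-h^{p-2j}$ is divisible by $x-h$ with explicit cofactor $\sum_{m=0}^{p-2j-1}x^{m}h^{p-2j-1-m}$. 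Hence
\[
C'=\frac{C}{p(x-h)}=\sum_{j=1}^{k}(-1)^{j+1}\,\frac{1}{p}\binom{p}{j}\,(xh)^{j}\sum_{m=0}^{p-2j-1}x^{m}h^{p-2j-1-m}\ \in\ S[x].
\]

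Next I would reduce this expression modulo $(p,x-h)S[x]$, which amounts to substituting $x\equiv h$ and then reducing the integer coefficients mod $p$. Under $x\equiv h$ the inner sum consists of $p-2j$ terms each congruent to $h^{p-2j-1}$, and $(xh)^{j}\equiv h^{2j}$, so the $j$-th summand becomes $(-1)^{j+1}\,\tfrac{1}{p}\binom{p}{j}\,(p-2j)\,h^{p-1}$. Then I would invoke the two elementary congruences mod $p$, namely $p-2j\equiv -2j$ and
\[
\frac{1}{p}\binom{p}{j}=\frac{(p-1)(p-2)\cdots(p-j+1)}{j!}\equiv\frac{(-1)^{j-1}(j-1)!}{j!}=\frac{(-1)^{j-1}}{j}\pmod p,
\]
the inverse $1/j$ existing since $1\le j\le k<p$. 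Multiplying the pieces, the $j$-th summand is congruent to $(-1)^{j+1}(-1)^{j-1}\cdot\tfrac{-2j}{j}\cdot h^{p-1}=-2\,h^{p-1}$ mod $(p,x-h)$, independently of $j$.

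Finally, summing over $j=1,\dots,k$ gives $C'\equiv -2k\,h^{p-1}\pmod{(p,x-h)S[x]}$, and since $p=2k+1$ we have $-2k=-(p-1)\equiv 1\pmod p$, whence $C'\equiv h^{p-1}$, as claimed. The only step that deserves any care is the first one—verifying that $p(x-h)$ divides $C$ in $S[x]$ over an arbitrary ring—and that is exactly why one reads off the factorization from the explicit identity of \Cref{P6} instead of attempting a divisibility argument inside $S$ itself; everything after that is routine modular bookkeeping with binomial coefficients.
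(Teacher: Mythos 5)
Your proof is correct, and since the paper only cites \cite{KA} for this lemma (no proof is reproduced), there is nothing in-paper to compare against; your computation is exactly the kind of routine verification the citation suppresses. Two remarks. First, you implicitly made the right choice of which expression in \Cref{P6} to take as the definition of $C$: as printed, the closed form $(x-h)^p-(x^p-h^p)$ and the displayed sum differ by a sign (check $p=3$: the left side is $-3xh(x-h)$ while the sum is $3xh(x-h)$), and it is the summation form — equivalently $C=(x^p-h^p)-(x-h)^p$, which is how $C'$ is actually used in the proofs of \Cref{CM_dual_prop} and \Cref{lem:forward_implication} via $X^p-h^p=(X-h)^p+pC'(X-h)$ — that yields the stated congruence $C'\equiv h^{p-1}$; the other sign convention would give $-h^{p-1}$. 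Your per-summand bookkeeping ($\tfrac{1}{p}\binom{p}{j}\equiv (-1)^{j-1}j^{-1}$, $p-2j\equiv -2j$, each term $\equiv -2h^{p-1}$, total $-2k\equiv 1$) checks out, with the minor note that invertibility of $j$ in $S/pS$ holds because the image of $\mathbb{Z}$ there is $\mathbb{F}_p$. Second, a slightly shorter route avoids the binomial gymnastics entirely: writing $x=h+t$ gives $C=\sum_{i=1}^{p-1}\binom{p}{i}t^ih^{p-i}$, hence $C'=\sum_{i=1}^{p-1}\tfrac{1}{p}\binom{p}{i}t^{i-1}h^{p-i}$, and modulo $(p,t)$ only the $i=1$ term survives, giving $C'\equiv h^{p-1}$ at once; but your argument is complete as it stands.
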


%%%%%%%%%%%%%%%%%%%%%%%%%%%%%
\section{Roberts's Theorem Revisited}
%%%%%%%%%%%%%%%%%%%%%%%%%%%%

In this section we give an alternate proof of the main theorem in \cite{RO}. Our extension of this theorem to mixed characteristic relies on the proof in this section. First a preparatory observation.

\begin{prop}\label{prop:RLR_epsilon}
    Suppose $S$ is a regular local ring and $\epsilon$ is a primitive $n$th root of unity for some integer $n$. 
  \begin{enumerate} 
  \item[(i)] If $S$ is unramified, then $S[\epsilon]$ is a regular semi-local ring, and thus a UFD. Moreover, if $S$ is complete and has mixed characteristic, then $S[\epsilon]$ is a regular local ring.
  \item[(ii)] If $n$ is a unit in $S$, then $S[\epsilon]$ is a regular semi-local ring. 
\end{enumerate}
\end{prop}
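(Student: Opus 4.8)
The plan is to reduce the statement to showing that $S[\epsilon]$ is \emph{regular}, and then to prove regularity first when $n$ is a unit in $S$ (where $S\to S[\epsilon]$ will turn out to be \'{e}tale) and then, in the mixed characteristic part of (i), by separating off the prime-to-$p$ part of the cyclotomic extension and handling the $p$-power part with an Eisenstein computation. For the reductions: $S[\epsilon]$ is module finite over $S$ since $\epsilon$ is a root of $X^{n}-1\in S[x]$, and it is a domain because $S$ is a domain and $S[\epsilon]\hookrightarrow\mathrm{Frac}(S)(\epsilon)$; so it is a semi-local domain all of whose maximal ideals lie over $\m$, and regularity may be checked at each of them. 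Granting regularity, $S[\epsilon]$ is a UFD, because a regular Noetherian domain satisfies $\operatorname{Cl}=\operatorname{Pic}$ (every height-one prime is locally principal) and the Picard group of a semi-local ring is trivial (finitely generated projectives over semi-local rings are free), so $\operatorname{Cl}(S[\epsilon])=0$; and for the ``moreover'' statement, a module-finite algebra over a complete local ring that is a domain is local, so once $S[\epsilon]$ is regular semi-local it is regular local. Thus it suffices to prove that $S[\epsilon]$ is regular.

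\emph{The case $n$ a unit in $S$.} This also covers (i) in equal characteristic, since then $\mathrm{char}(S)=0$ or $\mathrm{char}(S)\nmid n$, and in either case $n$ is a unit. Let $g\in S[x]$ be the monic minimal polynomial of $\epsilon$ over $L=\mathrm{Frac}(S)$ (it lies in $S[x]$ since $S$ is integrally closed), so that $S[\epsilon]=S[x]/(g)$ is free over $S$, and write $X^{n}-1=g(x)h(x)$ with $h\in S[x]$. Differentiating and evaluating at $\epsilon$ yields $g'(\epsilon)\,h(\epsilon)=n\,\epsilon^{n-1}$, which is a unit of $S[\epsilon]$ because $n$ and $\epsilon^{n}=1$ are units; hence $g'(\epsilon)$ is a unit, so $\Omega_{S[\epsilon]/S}\cong S[\epsilon]/(g'(\epsilon))=0$, and being also finite and flat over $S$ the algebra $S[\epsilon]$ is \'{e}tale over $S$. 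An \'{e}tale algebra over a regular local ring is regular---localizing at a maximal ideal, the structure map is flat local with regular base and a field for closed fibre---so $S[\epsilon]$ is regular.

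\emph{The case $S$ unramified of mixed characteristic $p$.} Write $n=p^{a}m$ with $p\nmid m$ and choose the compatible roots $\epsilon_{p}:=\epsilon^{m}$, a primitive $p^{a}$-th root of unity, and $\epsilon_{m}:=\epsilon^{p^{a}}$, a primitive $m$-th root of unity, so that $S[\epsilon]=S[\epsilon_{p}][\epsilon_{m}]$ and $m$ is a unit in $S$. Since $S$ is unramified, $p\notin\m^{2}$, so $p$ is a prime element of $S$ and \Cref{prop:cyclotomic_irreducible} gives $S[\epsilon_{p}]=S[x]/(\Phi_{p^{a}}(x))$, free over $S$ of rank $N:=p^{a-1}(p-1)$. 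The substitution $x=y+1$ turns $\Phi_{p^{a}}(y+1)$ into an Eisenstein polynomial at $p$ with constant term $\Phi_{p^{a}}(1)=p$; reducing modulo $\m$ kills all the non-leading coefficients, so $S[\epsilon_{p}]/\m S[\epsilon_{p}]\cong k[y]/(y^{N})$, which is local---hence $S[\epsilon_{p}]$ is local, with maximal ideal $\n=\m S[\epsilon_{p}]+(y)$. The Eisenstein relation expresses $p$ as $y^{N}$ times a unit of $S[\epsilon_{p}]$, so choosing a regular system of parameters $p,x_{2},\dots,x_{d}$ of $S$ we obtain $\n=(y,x_{2},\dots,x_{d})$; as $\dim S[\epsilon_{p}]=\dim S=d$, the local ring $S[\epsilon_{p}]$ has maximal ideal generated by $d$ elements and is therefore regular (local, of mixed characteristic $p$, with $y=\epsilon_{p}-1$ a parameter). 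Now apply the previous case to the regular local ring $S[\epsilon_{p}]$ and to $\epsilon_{m}$, with $m$ a unit: $S[\epsilon]=S[\epsilon_{p}][\epsilon_{m}]$ is regular semi-local, which proves (i). If moreover $S$ is complete, then $S[\epsilon]$ is a domain module finite over the complete local ring $S$, hence local, hence regular local.

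The step I expect to be the main obstacle is the mixed-characteristic Eisenstein analysis of $S[\epsilon_{p}]$: identifying its maximal ideal through the reduction $k[y]/(y^{N})$ and rewriting $p$ as a unit times $(\epsilon_{p}-1)^{N}$. The remaining pieces---the unramifiedness computation $g'(\epsilon)h(\epsilon)=n\epsilon^{n-1}$, \'{e}tale descent of regularity, and triviality of the Picard group of a semi-local ring---are standard; the only points needing some care are the compatible choice of the roots $\epsilon_{p},\epsilon_{m}$ so that $S[\epsilon]=S[\epsilon_{p}][\epsilon_{m}]$, and the identity $\Phi_{p^{a}}(1)=p$.
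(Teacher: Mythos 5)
Your proof is correct, and its backbone is the same as the paper's: split $n$ into its $p$-power and prime-to-$p$ parts, use \Cref{prop:cyclotomic_irreducible} to get $S[\epsilon_p]=S[x]/(\Phi_{p^a})$, deduce that $S[\epsilon_p]$ is regular local because $p$ is a unit times $(\epsilon_p-1)^{N}$, and then adjoin the prime-to-$p$ root. Where you genuinely diverge is in how you handle the prime-to-$p$ step. The paper works concretely: it factors the minimal polynomial of $\epsilon_2$ modulo the maximal ideal into distinct irreducibles, identifies the maximal ideals $Q_i=(\m,q_i(\epsilon_2))$, and checks by hand that $Q_iT[\epsilon_2]_{Q_i}=\m T[\epsilon_2]_{Q_i}$. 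You instead show $g'(\epsilon)h(\epsilon)=n\epsilon^{n-1}$ is a unit, conclude $\Omega_{S[\epsilon]/S}=0$, so the extension is finite flat and unramified, hence \'{e}tale, and invoke the fact that \'{e}tale over regular is regular. This is a clean abstraction of exactly the same phenomenon, and it also lets you reduce the equicharacteristic case of (i) to (ii) by the observation that a primitive $n$-th root of unity cannot exist in characteristic $p$ unless $p\nmid n$. Finally, your Eisenstein step is a bit more self-contained than the paper's: you read off $p=(\text{unit})\cdot(\epsilon_p-1)^{N}$ directly from the relation $\Phi_{p^a}(\epsilon_p)=0$ in the local ring $S[\epsilon_p]$, whereas the paper cites the classical identity $p=u(\epsilon_1-1)^{\phi(p^r)}$ in $\mathbb{Z}[\epsilon_1]$ from Neukirch. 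Both approaches prove the same thing; yours trades the paper's explicit factorization computation for standard facts about \'{e}tale maps and the Picard group of a semi-local ring.
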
 

\begin{proof}
    For (i) we prove the mixed characteristic case, since the proof of the equi-characteristic case is similar (and easier). So suppose $S$ is an unramified regular local ring of mixed characteristic $p>0$ and $\mathfrak{n}$ its maximal ideal. Write $n= p^rn_0$, with $p \nmid n_0$ and $\epsilon=\epsilon_1\epsilon_2$ where $\epsilon_1$ is a primitive $p^r$-th root of unity and $\epsilon_2$ a primitive $n_0$-th root of unity. By \Cref{prop:cyclotomic_irreducible}, $S[\epsilon_1] = S[x]/(\Phi_{p^r}(X))$. Suppose $M\subseteq S[x]$ is a maximal ideal containing $\n$ and $\Phi_{p^r}(X)$. Since the binomial coefficients ${p^r\choose i}$ for $1\leq i\leq p^r-1$ are divisible by $p$, modulo $p$, we have $x^{p^r}-1=(x-1)^{p^r}$. Since $p\in M$, it follows that $x-1\in M$. Thus, $M = (\n, x-1)S[x]$ is the unique maximal ideal containing $\n$ and $\Phi_{p^r}(X)$, so that $(\n, \epsilon_1-1)S[\epsilon_1]$ is the unique maximal ideal in $S[\epsilon _1]$. In $\mathbb{Z}[\epsilon _1]$, $p = u(\epsilon _1-1)^{\phi (p^r)}$, where $u\in \mathbb{Z}[\epsilon_1]$ is a unit and $\phi(-)$ is the Euler totient function (see \cite[Lemma 10.1]{Neukirch1999}). Thus, $p$ is a redundant generator of $(\n, \epsilon_1-1)S[\epsilon_1]$, so that $S[\epsilon _1]$ is a ramified regular local ring (unless $p=2$ and $r=1$). Now set $T:=S[\epsilon_1]$, so that $S[\epsilon]=T[\epsilon_2]$. Let $\m$ denote the maximal ideal of $T$ and set $k:=T/\m$. Since $T[\epsilon_2]$ is an integral extension of $T$, it is semi-local and each of its maximal ideals contract to $\m$ in $T$.
\par Now $n_0\not\equiv 0$ in $k$, so that the images of $x^{n_0}-1$ and its derivative are relatively prime in $k[x]$. Thus, the image of $x^{n_0}-1$ factors into distinct irreducible factors. Thus, if we write $g(x)$ for the minimal polynomial of $\epsilon_2$ over the quotient field of $T$, then the image of $g(x)$ in $k[x]$, factors into a product of distinct irreducible factors, say $g(x)=q_1(x)\dots q_r(x)+m(x)$, where the images of the $q_i(x)$ in $k[x]$ are the distinct irreducible factors of the image of $g(x)$ in $k[x]$ and $m(x)\in \m[X]$. Since $T[\epsilon_2]=T[x]/(g(x))$, it follows that the maximal ideals of $T[\epsilon_2]$ are $Q_i:=(\m,q_i(\epsilon))S[\epsilon_2]$ for $1\leq i\leq r$. Then, in $T[\epsilon_2]_{Q_1}$, we have $q_1(\epsilon_2)=-(q_2(\epsilon_2)\dots q_r(\epsilon_2))^{-1}m(\epsilon_2)$, so that $Q_1T[\epsilon_2]_{Q_1}=\m T[\epsilon_2]_{Q_1}$. Thus, $T[\epsilon_2]_{Q_1}$ is a regular local ring. The argument is similar for $i=2,\dots,r$. Therefore $T[\epsilon_2]$ is a regular semi-local ring. Since a semi-local domain which is locally a UFD, is a UFD, we have that $T[\epsilon_2]=S[\epsilon]$ is a UFD.

\medskip
\noindent
Part (ii) follows in the same way as the $n_0$ case above. 
\end{proof}

\begin{rem}\label{rem:primitive_root_of unity}
     If $(S, \n, k)$ is a ramified regular local ring of mixed characteristic $p$, then for $\epsilon$ a primitive $p$-th root of unity, $S[\epsilon]$ need not be regular. For instance, set $S$ to be $V[x,y]/(p-x^2y^3)$ localized at the ideal generated by the images of $p, x, y$ where $V$ is any DVR with uniformizing parameter $p$. Then $S$ is a ramified regular local ring. Let $\epsilon$ be a primitive $p$th root of unity. Let $\m$ be a maximal ideal in $S[\epsilon]$. Let the polynomial ring $S[t]$ map onto $S[\epsilon]$ in the obvious way. Then $\m$ corresponds to a maximal ideal $M\subseteq S[t]$ containing $p$ and $t^p-1$, and hence $M = (x, y, t-1)$ and thus $\m = (x, y, \epsilon-1)S[\epsilon]$ is the unique maximal ideal of $S[\epsilon]$. In $S[\epsilon]$ we have $x^2y^3 = p = u(\epsilon-1)^{p-1}$, for $u\in \mathbb{Z}[\epsilon]$ a unit. Thus, $S[\epsilon]$ is not a regular local ring. 
\end{rem}

\begin{thm}[\textbf{Roberts's Theorem \cite{RO}}]\label{thm:Roberts}
   Let $(S, \n, k)$ be a regular local ring with quotient field $L$ and $R$ the integral closure of $S$ in a finite abelian extension $L\subseteq K$. Assume the characteristic of $k$ does not divide $[K:L]$. Then $R$ is Cohen-Macaulay.  
\end{thm}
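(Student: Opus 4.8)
The plan is to reduce to the case where the base ring contains enough roots of unity, then decompose the abelian extension into a tower of cyclic (even prime-degree cyclic) Kummer-type pieces, build the integral closure explicitly via adjoined roots, and invoke Hironaka's criterion (or a Serre-type argument) to detect Cohen-Macaulayness from the behavior at the generic point together with depth in codimension one.

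First I would adjoin a primitive $n$-th root of unity $\epsilon$ to $S$, where $n = [K:L]$; since $n$ is a unit in $S$ by hypothesis, \Cref{prop:RLR_epsilon}(ii) guarantees $S' := S[\epsilon]$ is a regular semi-local ring, hence a finite product of regular local rings, and faithfully flat and module-finite over $S$. It suffices to prove $R \otimes_S S'$ is Cohen-Macaulay (descent of the CM property along a faithfully flat local map with CM closed fibre, or simply because $R$ is a direct summand of $R\otimes_S S'$ as an $S$-module once $n$ is invertible, via the averaging/trace idempotent $\frac{1}{n}\tr$), so we may replace $S$ by $S'$ and $L$ by $L' = \mathrm{Frac}(S')$; note the new Galois group is a quotient of $\mathrm{Gal}(K'/L')$ which is a subgroup of the original abelian group, so the order still divides $n$ and is a unit. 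Thus we may assume $S$ contains a primitive $n$-th root of unity.

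Next, since $\mathrm{Gal}(K/L)$ is a finite abelian group of order invertible in $S$, Kummer theory describes $K = L(\sqrt[{m_1}]{a_1}, \ldots, \sqrt[{m_r}]{a_r})$ for suitable $a_i \in L^{\times}$ and $m_i \mid n$. Since $S$ is a UFD (regular local), after scaling each $a_i$ by $m_i$-th powers I may take the $a_i$ to be square-free elements of $S$; and by further factoring and re-choosing generators of the Kummer group I can arrange that the square-free parts are pairwise coprime in codimension one, i.e. no height-one prime contains two of them — this is where the structure theory of finite abelian groups combined with unique factorization does the work. Then \Cref{HK2} (iterated) shows $S[\sqrt[{m_1}]{a_1}, \ldots, \sqrt[{m_r}]{a_r}]$ is already integrally closed, hence equals $R$, and \Cref{lem:deg_extension}/\Cref{kernel_lemma} identify $R$ as the tensor product $\bigotimes_S S[\sqrt[{m_i}]{a_i}]$ over $S$, with total degree the product of the $m_i$, still a unit in $S$.

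Finally I would establish that each cyclic piece $S_i := S[\sqrt[{m_i}]{a_i}]$ is Cohen-Macaulay and, more importantly, that the tensor product is. For a single square-free $a$, the ring $S[\sqrt[m]{a}] \cong S[x]/(x^m - a)$ is a hypersurface, hence Cohen-Macaulay. For the tensor product $\bigotimes_S S_i$, which is $S[x_1,\ldots,x_r]/(x_1^{m_1}-a_1,\ldots,x_r^{m_r}-a_r)$, the elements $x_i^{m_i}-a_i$ form a regular sequence in the Cohen-Macaulay (indeed regular) ring $S[x_1,\ldots,x_r]$ — this is a length/dimension count using that the $a_i$ are nonzerodivisors and the quotient has the expected dimension $\dim S$ — so the quotient is a complete intersection over $S$, hence Cohen-Macaulay. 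The main obstacle is the codimension-one bookkeeping in the Kummer step: arranging that the generators $a_i$ can be chosen square-free with pairwise-coprime supports while still generating $K$ over $L$, so that \Cref{HK2} applies and no normalization is lost. Once the integral closure is literally a complete intersection over $S$, Cohen-Macaulayness is immediate, and this explicit "adjoin roots of coprime square-free elements" picture is exactly the "good ramification in codimension one" heuristic the introduction advertises.
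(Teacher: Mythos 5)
There is a genuine gap in the Kummer step, and it is exactly the ``codimension-one bookkeeping'' you flag at the end as the main obstacle. You claim that after rescaling and re-choosing generators of the Kummer group you can always arrange the square-free representatives $a_1,\dots,a_r$ to have pairwise coprime supports, so that $S[\sqrt[m_1]{a_1},\dots,\sqrt[m_r]{a_r}]$ is integrally closed by \Cref{HK2} and therefore equals $R$. This is false in general. Take $S$ a regular local ring with $\operatorname{char}(k)\neq 2$, three distinct primes $u,v,w\in S$, and $K=L(\sqrt{uv},\sqrt{uw})$. The corresponding Kummer subgroup of $L^\times/(L^\times)^2$ is $\{1,uv,uw,vw\}$, and every pair of nontrivial square-free representatives among $uv,uw,vw$ shares a prime, so no choice of two generators has coprime supports. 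Concretely, $S[\sqrt{uv},\sqrt{uw}]$ is \emph{not} integrally closed: writing it as the free $S$-module with basis $1,\sqrt{uv},\sqrt{uw},\sqrt{uv}\sqrt{uw}$, the integral element $\sqrt{vw}=\sqrt{uv}\sqrt{uw}/u$ would have to equal $u^{-1}\sqrt{uv}\sqrt{uw}$, and $u^{-1}\notin S$. So $R$ need not be a hypersurface or complete intersection over $S$, and your final identification fails.

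The paper's proof sidesteps this by \emph{not} attempting to realize $R[\epsilon]$ as a complete intersection. It factors each Kummer generator $a_i$ into units and primes in the UFD $S[\epsilon]$, collects the \emph{distinct} primes and units $q_1,\dots,q_h$ that occur, and forms the possibly strictly larger ring $T:=S[\epsilon][\sqrt[t]{q_1},\dots,\sqrt[t]{q_h}]\supseteq R[\epsilon]$. Since the $q_j$ are distinct (hence pairwise coprime), \Cref{HK2} applies and $T$ is integrally closed; it is also free over $S[\epsilon]$, hence Cohen--Macaulay. The descent step your proposal is missing is then: by \Cref{lem:deg_extension} the degree $[\mathrm{Frac}(T):K(\epsilon)]$ is a unit in $R[\epsilon]$, so $R[\epsilon]$ is a direct summand of $T$, and Cohen--Macaulayness passes to summands. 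This ``enlarge to a free, integrally closed ring, then split off a summand'' move is essential; without it one has no control over the integral closure.
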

\begin{proof}
    Let $n$ denote the order of the Galois group of $K$ over $L$, so that $n$ is a unit in $S$. Let $\epsilon$ be a primitive $n$-th root of unity. Then by \Cref{prop:RLR_epsilon}, $S[\epsilon]$ is a (possibly ramified) regular semi-local ring. Moreover, $R[\epsilon]$ is the integral closure of $S[\epsilon]$ in $K(\epsilon)$. To see this, it is enough to show that $R[\epsilon]$ is integrally closed. Let $R_1$ be the integral closure of $R$ in $K(\epsilon)$ and $f(x)\in R[x]$, the minimal polynomial of $\epsilon$ over $K$. Since $n$ is a unit in $R$, $x^n-1$ has distinct roots and hence $f(x)$ is a separable polynomial. Thus, $f'(\epsilon)R_1\subseteq R[\epsilon]$. Since $R$ is integrally closed, $x^n-1 = f(x)g(x)$, with $g(x)\in R[x]$. Thus, $n\epsilon^{n-1} = f'(\epsilon)g(\epsilon) $, so that $nR_1\subseteq R[\epsilon]$. Since $n$ is a unit in $R[\epsilon]$, we have $R[\epsilon] = R_1$. 
\par
Suppose we could show that $R[\epsilon]$ is Cohen-Macaulay. Since $R[\epsilon]$ is free over $R$, $R$ is a summand of $R[\epsilon]$, and thus $R$ is Cohen-Macaulay. Therefore, it remains to be seen that $R[\epsilon]$ is Cohen-Macaulay. For this, we use Kummer Theory.
Now, it is straightforward to see that $\mathrm{Gal} (K(\epsilon)/L(\epsilon))$ is isomorphic to a subgroup of $\mathrm{Gal} (K/L)$, and hence $\mathrm{Gal} (K(\epsilon)/L(\epsilon))$ is an abelian group. Let $t$ denote the exponent of $\mathrm{Gal}(K(\epsilon)/L(\epsilon))$, so that $t$ is a unit in $S[\epsilon]$, since $t\mid n$. By Kummer theory, there exist $a_1, \ldots, a_s\in L(\epsilon)$ such that $K(\epsilon) = L(\epsilon, \sqrt[t]{a_1}, \ldots, \sqrt[t]{a_s})$. Clearing denominators, we may assume that each $a_i\in S[\epsilon]$. Thus, $K(\epsilon)$ is the quotient field of $S[\epsilon][\sqrt[t]{a_1}, \ldots, \sqrt[t]{a_s}]$. 

\par
Now, as an element of $S[\epsilon]$, each $a_i$ is a unit times a product of primes. Let $q_1, \ldots, q_h$ be the distinct unit and prime factors appearing among $a_1, \ldots, a_s$. Then no height one prime of $S[\epsilon]$ contains any two $q_i, q_j$. Thus, by \Cref{HK2}, $T := S[\epsilon][\sqrt[t]{q_1}, \ldots, \sqrt[t]{q_h}]$ is integrally closed. Set $E$ to be the quotient field of $T$. Moreover, $K(\epsilon)\subseteq E$, so that $R[\epsilon] \subseteq T$. By \Cref{lem:deg_extension}, degree of $E$ over $L(\epsilon)$ is a is a unit in $R[\epsilon]$. Therefore, the degree of $E$ over $K(\epsilon)$ is a unit in $R[\epsilon]$, and hence $R[\epsilon]$ is a summand of $T$ via the splitting given by restricting the field trace map and dividing by the degree of $E$ over $K(\epsilon)$. But $T$ is a free extension of $S[\epsilon]$, so $T$ is Cohen-Macaulay, and hence $R[\epsilon]$ is Cohen-Macaulay, which completes the proof.
\end{proof}

%%%%%%%%%%%%%%%%%%%%%%%%%%%%%
\section{Abelian extensions and the tamely $p$-ramified property}\label{sec:Abelian_extensions&p_unramified_property}
%%%%%%%%%%%%%%%%%%%%%%%%%%%%%%

The goal of this section is twofold: firstly, to characterize the tamely $p$-ramified property in terms of a certain numerical criterion in codimension one and secondly, to define tamely $p$-ramified generically abelian extensions of an unramified regular local ring.

\par
We will show

\begin{thm}\label{thm:p_ramification_characterization}
  Let $S$ be a regular local ring such that $\mathrm{char}(\mathrm{Frac}(S))=0$. Assume $S$ possesses a primitive $p$-th root of unity for $p\in \mathbb{Z}$ a prime integer. Then the following are equivalent:
  \begin{enumerate}
      \item $0\neq f\in S$ is $p$-unramified.
      \item $0\neq f\in S$ is tamely $p$-ramified.

    \item either
    \begin{enumerate}
      \item $f\in S$ is a $p$-th power or
      \item $f\notin \bigcup_{Q\in \ass(S/(p))} Q$ and for all $Q\in \ass(S/(p))$

   \[\Gamma_{QS_Q}(f)\geq [\sum_{i=0}^{\infty}(1/p^i)]ord_Q(p)=\dfrac{p}{p-1}ord_Q(p).\]
  \end{enumerate}
    
  \end{enumerate} 

\end{thm}

\begin{lem}\label{CM_dual_prop}
Let $\D$ be a Gorenstein Noetherian domain such that the prime integer $p$ is a non-unit in $\D$. Suppose that $Ass(\D/(p))=\{(\alpha)\}$ and let $k$ be such that $p\in (\alpha^k)\setminus (\alpha^{k+1})$. Let $f\in \D$ be such that $f$ is not a $p$-th power and $1\leq q:=\Gamma_{(\alpha)}(f)$. Let $\omega$ be a root of the monic polynomial $X^p-f\in \D[X]$ in some algebraic closure of the fraction field of $\D$. Set $r=min\{q,k+1\}$. For any positive integer $n$ satisfying $n\leq (p-1)^{-1}min\{q,k\}$ and $h$ any $p$-th root of $f$ modulo $\alpha^r$, set $J_{n,h}:=(\omega-h,\alpha^n)^{p-1}\D[\omega]$. Then
\begin{enumerate}
    \item $Hom_{\D[\omega]}(J_{n,h},\D[\omega])=\langle 1,q_1,\dots,q_{p-1}\rangle_{\D[\omega]}$ where $q_i:J_{n,h}\rightarrow \D[\omega]$ is the map given by multiplication by $\alpha^{-ni}(\omega-h)^i$.
    \item $J_{n,h}$ is $P$-primary for $P:=(\alpha, \omega-h)$, the unique associated prime of $p\D[\omega]$.
    \item $Hom_{\D[\omega]}(J_{n,h},\D[\omega])$ is a maximal Cohen-Macaulay $\D$-module.
\end{enumerate}
\end{lem}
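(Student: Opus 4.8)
The plan is to determine the ring $\D[\omega]/J_{n,h}$ and the fractional ideal $\Hom_{\D[\omega]}(J_{n,h},\D[\omega])$ completely and explicitly; the numerical hypothesis $n\le(p-1)^{-1}\min\{q,k\}$ will turn out to be exactly what makes both objects degenerate into something transparent, and all three parts fall out of this. I begin with the common setup. Since $f$ is not a $p$-th power in $\mathrm{Frac}(\D)$, the polynomial $X^p-f$ is irreducible over $\mathrm{Frac}(\D)$, so $\D[\omega]=\D[X]/(X^p-f)$ is $\D$-free of rank $p$ with $\D$-basis $1,(\omega-h),\dots,(\omega-h)^{p-1}$; being a hypersurface over the Gorenstein domain $\D$, the ring $\D[\omega]$ is itself Gorenstein, and $\Hom_\D(\D[\omega],\D)\cong\D[\omega]$ as $\D[\omega]$-modules. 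From $\ass(\D/(p))=\{(\alpha)\}$ one gets $\sqrt{(p)}=(\alpha)$, and then writing $p=\alpha^k u$ with $u\notin(\alpha)$, that $u$ is a unit, so $(p)=(\alpha^k)$; note $k\ge1$, $r\ge1$, and the hypothesis gives both $n(p-1)\le k$ and $n(p-1)\le r$. Now put $T:=\omega-h$ and substitute $\omega=T+h$ in $X^p-f$: then $\D[\omega]=\D[T]/(g(T))$ with $g(T)=T^p+\sum_{j=1}^{p-1}\binom{p}{j}h^{p-j}T^j+(h^p-f)$. Since $\binom{p}{j}\in(p)\subseteq(\alpha^k)$ and $h^p-f\in(\alpha^r)$, and both ideals are contained in $(\alpha^{n(p-1)})$, every non-leading coefficient of $g$ lies in $(\alpha^{n(p-1)})$. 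Reading off the relation $g=0$ this gives $(\omega-h)^p\in(\alpha^{n(p-1)})\D[\omega]$; distributing the coefficients among the monomials $T^j$ it gives $g(T)\in(T,\alpha^n)^{p-1}\D[T]$. Combining the latter with the direct-sum decomposition $(T,\alpha^n)^{p-1}=\bigoplus_{m=0}^{p-2}\alpha^{n(p-1-m)}\D\,T^m\oplus\bigoplus_{m\ge p-1}\D\,T^m$ of this ideal inside $\D[T]=\bigoplus_m\D\,T^m$ yields the key identification
\[ \D[\omega]/J_{n,h}\ \cong\ \D[T]/(T,\alpha^n)^{p-1}\ \cong\ \bigoplus_{i=0}^{p-2}\D/(\alpha^{\,n(p-1-i)})\qquad\text{as }\D\text{-modules.}\]
I expect this chain of identifications — in particular verifying that the exact inequality $n(p-1)\le\min\{q,k\}$ is what forces $g\in(T,\alpha^n)^{p-1}$ — to be the genuinely computational step; everything below is formal once it is in hand.

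For (1), identify $\Hom_{\D[\omega]}(J_{n,h},\D[\omega])$ with the fractional ideal $J_{n,h}^{-1}=\{x\in\mathrm{Frac}(\D[\omega]):xJ_{n,h}\subseteq\D[\omega]\}$ (legitimate since $\D[\omega]$ is a domain and $\alpha^{n(p-1)}\in J_{n,h}$), under which $1,q_1,\dots,q_{p-1}$ correspond to $1,\alpha^{-n}(\omega-h),\dots,\alpha^{-n(p-1)}(\omega-h)^{p-1}$. For ``$\supseteq$'': multiplying $\alpha^{-ni}(\omega-h)^i$ by a generator $(\omega-h)^j\alpha^{n(p-1-j)}$ of $J_{n,h}$ gives $\alpha^{n(p-1-i-j)}(\omega-h)^{i+j}$, which lies in $\D[\omega]$ when $i+j\le p-1$, while for $i+j\ge p$ one absorbs a factor $(\omega-h)^p\in(\alpha^{n(p-1)})\D[\omega]$ and the resulting $\alpha$-exponent $n(2(p-1)-i-j)$ is $\ge0$. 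For ``$\subseteq$'': for $x\in J_{n,h}^{-1}$ write $\alpha^{n(p-1)}x=\sum_{i=0}^{p-1}c_i(\omega-h)^i$ with $c_i\in\D$; imposing $x\cdot(\omega-h)^j\alpha^{n(p-1-j)}\in\D[\omega]$ for each $j$, the terms of $(\omega-h)$-degree $\ge p$ are automatically in $\D[\omega]$ by the same absorption, so $\alpha^{-nj}\sum_{i\le p-1-j}c_i(\omega-h)^{i+j}\in\D[\omega]$, and comparing coordinates in the free $\D$-basis forces $c_i\in(\alpha^{nj})$ for all $i\le p-1-j$; taking $j=p-1-i$ gives $c_i\in(\alpha^{n(p-1-i)})$, whence $x=\sum_i(\alpha^{-n(p-1)}c_i)(\omega-h)^i\in\sum_i\D[\omega]\,\alpha^{-ni}(\omega-h)^i$. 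This proves $J_{n,h}^{-1}=\langle1,q_1,\dots,q_{p-1}\rangle_{\D[\omega]}$.

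For (2), note $\sqrt{J_{n,h}}=\sqrt{(\omega-h,\alpha^n)}=(\omega-h,\alpha)=:P$, which is prime since $\D[\omega]/P\cong\D/(\alpha)$ (using $h^p\equiv f\bmod\alpha$). From the displayed isomorphism and the fact that each $\D/(\alpha^m)$ with $m\ge1$ is a Cohen--Macaulay $\D$-module, hence unmixed, with $\ass_\D=\{(\alpha)\}$, we get $\ass_\D(\D[\omega]/J_{n,h})=\{(\alpha)\}$; hence every prime in $\ass_{\D[\omega]}(\D[\omega]/J_{n,h})$ lies over $(\alpha)$. But $\D[\omega]\otimes_\D\kappa((\alpha))\cong\kappa[T]/(T^p)$, where $\kappa=\mathrm{Frac}(\D/(\alpha))$ has characteristic $p$ and $\bar f=\bar h^{\,p}$, is a local ring, so $P$ is the only prime of $\D[\omega]$ lying over $(\alpha)$; therefore $\ass_{\D[\omega]}(\D[\omega]/J_{n,h})=\{P\}$, i.e. $J_{n,h}$ is $P$-primary. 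Finally $(p)=(\alpha^k)$ and $\D[\omega]/p\D[\omega]$ is Cohen--Macaulay ($\D[\omega]$ is, and $p$ is a nonzerodivisor), hence unmixed, and by the same ``unique prime over $(\alpha)$'' observation its radical is $P$; so $P$ is the unique associated prime of $p\D[\omega]$.

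For (3), apply the depth lemma to $0\to J_{n,h}\to\D[\omega]\to\D[\omega]/J_{n,h}\to0$: here $\D[\omega]$ is maximal Cohen--Macaulay over $\D$ (it is $\D$-free) and $\D[\omega]/J_{n,h}$ is Cohen--Macaulay of dimension $\dim\D-1$ by the direct-sum description, so $J_{n,h}$ is maximal Cohen--Macaulay over $\D$, equivalently over $\D[\omega]$. Since $\D[\omega]$ is Gorenstein, $\Ext^i_{\D[\omega]}(J_{n,h},\D[\omega])=0$ for $i>0$ and $\Hom_{\D[\omega]}(J_{n,h},\D[\omega])$ is again maximal Cohen--Macaulay over $\D[\omega]$, hence over $\D$; one may also deduce this from $\Hom_{\D[\omega]}(J_{n,h},\D[\omega])\cong\Hom_\D(J_{n,h},\D)$ together with the fact that the $\D$-dual of a maximal Cohen--Macaulay module over the Gorenstein domain $\D$ is maximal Cohen--Macaulay. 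This establishes (3).
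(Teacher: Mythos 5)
Your proof is correct, and it takes a genuinely different (and more elementary) route to parts (1) and (3) than the paper does, while the underlying arithmetic driving the whole argument is the same. Both proofs hinge on the observation that $X^p-f$ lies in $\tilde{J}_{n,h}:=(X-h,\alpha^n)^{p-1}\D[X]$: the paper packages the expansion of $(X-h)^p-(X^p-h^p)$ via Lemma~\ref{P6} of \cite{KA}, whereas you obtain the same conclusion by simply noting that each $\binom{p}{j}$ lands in $(\alpha^k)\subseteq(\alpha^{n(p-1)})$ and $h^p-f\in(\alpha^r)\subseteq(\alpha^{n(p-1)})$. From there the two proofs diverge. For (1), the paper exhibits $\tilde{J}_{n,h}$ as the ideal of maximal minors of a $p\times(p-1)$ matrix and invokes \cite[Lemma~2.5]{KU} to read off the generators of the inverse fractional ideal; you instead compute $J_{n,h}^{-1}$ by hand, using the $\D$-free basis $1,\omega-h,\dots,(\omega-h)^{p-1}$ and a direct coordinate comparison. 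For (3), the paper views $J_{n,h}^{-1}$ as a $1$-link of $J_{n,h}$ and applies \cite[Proposition~2.5]{10.2307/1971402} together with Hilbert--Burch; you instead use the explicit direct-sum description $\D[\omega]/J_{n,h}\cong\bigoplus_{i=0}^{p-2}\D/(\alpha^{n(p-1-i)})$ to see this quotient is Cohen--Macaulay, the depth lemma to conclude $J_{n,h}$ is maximal Cohen--Macaulay, and then Gorenstein duality. Your approach is self-contained and avoids linkage and determinantal-ideal machinery at the price of losing the structural information (the matrices $\Phi_{n,h}$, $\Psi_{n,h}$) that the paper's proof makes visible; the paper's route is shorter given the cited tools. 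One minor slip: the inference that $u$ in $p=\alpha^k u$ ``is a unit, so $(p)=(\alpha^k)$'' does not follow from $\ass(\D/(p))=\{(\alpha)\}$ alone in a general Gorenstein domain (it would, e.g., in a UFD), but you never actually use $(p)=(\alpha^k)$ --- only the given inclusion $p\in(\alpha^k)$ --- so nothing downstream is affected. Likewise, ``$f$ not a $p$-th power in $\D$'' gives irreducibility of $X^p-f$ over $\mathrm{Frac}(\D)$ only when $\D$ is integrally closed; the paper makes the same tacit assumption in writing $\D[\omega]\simeq\D[X]/(X^p-f)$, so this is not a point of divergence between the two arguments.
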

\begin{proof}
Since $f\in \D$ is not a $p$-th power, we have $\D[\omega]\simeq \dfrac{\D[X]}{(X^p-f)}$. Write $f=h^p+\alpha^r\cdot b$ for some $h,b\in \D$. Taking $S=\D$ in \Cref{P6}, let $C'\in \D[X]$ be as in \Cref{P6}. We have
 \begin{align}\label{F(W)_containment}
 X^p-f &=X^p-h^p-\alpha^{r}b \nonumber
 \\
 &= (X-h)(X^{p-1}+\dots+h^{p-1})-\alpha^{r}b \nonumber
 \\
 &= (X-h)((X-h)^{p-1}+C'p)-\alpha^{r}b \nonumber
 \\
 &= (X-h)\cdot (X-h)^{p-1}+C'p(X-h)-\alpha^{r}b \nonumber
 \\
 &= (X-h)\cdot (X-h)^{p-1} +\alpha^{n(p-1)}\cdot \gamma 
 \end{align}
for some $\gamma\in \D[X]$. Thus $X^p-f\in \tilde{J}_{n,h}:=(X-h,\alpha^n)^{p-1}\subseteq \D[X]$. Since $\tilde{J}_{n,h}$ is a power of a complete intersection ideal, it is unmixed. Moreover, it is $\tilde{P}$-primary for $\tilde{P}:=(X-h,\alpha)$. Thus $J_{n,h}$ is $P$-primary and (2) holds.
\par $\tilde{J}$ is the ideal of maximal minors of the $p\times (p-1)$ matrix 
 \[
\Phi_{n,h} = \begin{bmatrix} 
    X-h & 0 & \dots & 0 & 0 \\
    \alpha^n & X-h & 0 &\dots & 0\\
    0  & \alpha^n &X-h  &\dots & 0  \\
    \vdots &0 &\ddots & \ddots& \vdots\\
    0 & \dots & 0 & \alpha^n & X-h \\
    0 & \dots  & 0 & 0 & \alpha^n 
    \end{bmatrix}.
    \] 
    Set
   \[
\Psi_{n,h} = \begin{bmatrix} 
    X-h & 0 & \dots & 0 & 0& -\gamma\\
    \alpha^n & X-h & 0 &\dots & 0& 0\\
    0  & \alpha^n  &X-h  &\dots & 0 & 0 \\
    \vdots &0 &\ddots & \ddots& \vdots & \vdots\\
    0 & \dots & 0 & \alpha^n & X-h&0 \\
    0 & \dots  & 0 & 0 & \alpha^n & -(X-h)
    \end{bmatrix}.
\]  
By \cite[Lemma 2.5]{KU}, $Hom_{\D[\omega]}(J_{n,h},\D[\omega])\simeq (J_{n,h})^{-1}_{\D[\omega]}$ is generated as a $\D[\omega]$-module by the maps given by multiplication by the elements $\{\delta_i^{-1}\psi_{i,i}\}_{1\leq i\leq p}$ where $\delta_i$ and $\psi_{i,i}$ denote the images in $\D[\omega]$ of the $i$-th signed minor of $\Phi_{n,h}$ and the $(i,i)$-th cofactor of $\Psi_{n,h}$ respectively. This is precisely the generating set claimed in (1).
\par For (3), note that $Hom_{\D[\omega]}(J_{n,h},\D[\omega])\simeq (J_{n,h})^{-1}_{\D[\omega]}$ is isomorphic to a $1$-link of $J_{n,h}$. Since $J_{n,h}$ is height one unmixed and $\D[\omega]$ is Gorenstein, $\D[\omega]/J_{n,h}$ is Cohen-Macaulay if and only if $\D[\omega]/I$ is Cohen-Macaulay for any $1$-link $I$ of $J_{n,h}$ (see \cite{10.2307/1971402}[Proposition 2.5]). Moreover, $I$ is a Cohen-Macaulay $\D[\omega]$-module if and only if $\D[\omega]/I$ is a Cohen-Macaulay ring. Since $\D[\omega]/J_{n,h}\simeq \D[X]/\tilde{J}_{n,h}$ is Cohen-Macaulay by the Hilbert-Burch theorem, we are done.
\end{proof}

\begin{lem}\label{lem:forward_implication}
Let $S$ be a regular local ring such that $\mathrm{char}(\mathrm{Frac}(S))=0$. Assume $S$ possesses a primitive $p$-th root of unity for $p\in \mathbb{Z}$ a prime integer. If $0\neq f\in S$, $f\notin \bigcup_{Q\in \ass(S/(p))} Q$, satisfies for all $Q\in \ass(S/(p))$, $\Gamma_{QS_Q}(f)\geq p\cdot(p-1)^{-1}\mathrm{ord}_Q(p)$, then
\begin{enumerate}
    \item $S\to \overline{S[\omega]}$ is \'{e}tale over $p$ in codimension one for some (equivalently every) root $\omega$ of the polynomial $W^p-f\in S[W]$, where $\overline{*}$ denotes normalization and $W$ is an indeterminate over $S$.
    \item If $p\in S$ is not a unit, then for each prime divisor $q$ of $p$, $\overline{S[\omega]}_{(q)}$ is generated as a $S[\omega]_{(q)}$ module by $\{1,q^{-r}(\omega-h_q),\dots,q^{-r(p-1)}(\omega-h_q)^{p-1}\}$ for some $h_q\in S_{(q)}$ and $r=(p-1)^{-1}\mathrm{ord}_q(p)\in \mathbb{Z}$.
    \end{enumerate}
\end{lem}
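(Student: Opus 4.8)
The plan is to work one height-one prime of $S$ over $p$ at a time, reduce each such local question to a DVR, and there produce an \emph{explicit} integral generator of the normalization whose minimal polynomial, read modulo $p$, is separable. First the reductions. If $p$ is a unit in $S$ there is nothing to prove, so assume $S$ has mixed characteristic $p>0$. If $f$ is a $p$-th power in $S$, then since $S$ contains a primitive $p$-th root of unity $\epsilon$, every root $\omega$ of $W^p-f$ already lies in $S$, so $S[\omega]=S=\overline{S[\omega]}$ and both conclusions hold trivially (take $h_q=\omega$). So assume $f$ is not a $p$-th power; then $W^p-f$ is irreducible over $\mathrm{Frac}(S)$ (\cite{lang}, Theorem 9.1) and $S[\omega]\cong S[W]/(W^p-f)$ is a domain, and since the roots of $W^p-f$ differ by the units $\epsilon^j\in S$ the ring $S[\omega]$ does not depend on the root (this handles the ``equivalently every'' clause). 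As $S$ is a UFD, the height-one primes of $S$ containing $p$ are exactly the principal primes $(q)$ with $q$ a prime factor of $p$, and these coincide with $\ass(S/(p))=\Min(S/(p))$ (as $S/(p)$ is Cohen–Macaulay). Fix such a $q$, set $\D:=S_{(q)}$, a DVR with uniformizer $q$, and $A:=\D[\omega]$. Since normalization commutes with localization, $\overline{S[\omega]}_{(q)}=\overline A$, and it suffices to show $\D\to\overline A$ is \'etale and $\overline A=\langle 1,q^{-r}(\omega-h),\dots,q^{-r(p-1)}(\omega-h)^{p-1}\rangle_A$ for a suitable unit $h\in\D$.

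Next I construct the integral generator. Because $\epsilon\in S$, one has $p=u(1-\epsilon)^{p-1}$ with $u$ a unit (\cite[Lemma 10.1]{Neukirch1999}), so $\mathrm{ord}_q(p)=(p-1)r$ where $r:=\mathrm{ord}_q(1-\epsilon)\geq 1$; in particular $r=(p-1)^{-1}\mathrm{ord}_q(p)\in\mathbb Z$. The hypothesis $\Gamma_{q\D}(f)\geq\frac p{p-1}\mathrm{ord}_q(p)=pr$ gives $h\in\D$ with $f\equiv h^p\pmod{q^{pr}}$; since $f\notin(q)$, $h$ is a unit of $\D$, and we write $f=h^p+q^{pr}b$, $b\in\D$. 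Put $\theta:=q^{-r}(\omega-h)\in\mathrm{Frac}(A)$, so $\omega=h+q^r\theta$. Expanding $(h+q^r\theta)^p=f$ by the binomial theorem, cancelling $h^p$, and dividing by $q^{pr}$, and using that $\mathrm{ord}_q\binom pi=\mathrm{ord}_q(p)=(p-1)r$ for $1\leq i\leq p-1$ (valid since $(q)\cap\mathbb Z=p\mathbb Z$), one finds that $\theta$ is a root of a monic polynomial
\[
g(X)=X^p+\sum_{i=1}^{p-1}c_iX^i-b\in\D[X],\qquad c_i=\binom pi h^{p-i}q^{r(i-p)},\quad \mathrm{ord}_q(c_i)=r(i-1).
\]
Thus $\theta$ is integral over $\D\subseteq A$; since $[\mathrm{Frac}(A):\mathrm{Frac}(\D)]=p=\deg g$, the polynomial $g$ is the minimal polynomial of $\theta$, so $B:=A[\theta]=\D[\theta]=\bigoplus_{i=0}^{p-1}\D\theta^i$ is a free $\D$-module of rank $p$, a subring of $\mathrm{Frac}(A)$ containing $A$, and $B\subseteq\overline A$. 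Moreover $\langle 1,\theta,\dots,\theta^{p-1}\rangle_A=A[\theta]=B$, which is exactly the module description of conclusion (2) because $\theta^i=q^{-ri}(\omega-h)^i$. (In the language of \Cref{CM_dual_prop}, applied with $\alpha=q$ and $n=r$, this $B$ is the module $\Hom_A(J_{r,h},A)$ when $p\geq 3$.)

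Now the separability step and the conclusion. Reduce $g$ modulo $q$: the coefficients $c_i$ with $2\leq i\leq p-1$ vanish, since $\mathrm{ord}_q(c_i)=r(i-1)\geq r\geq 1$, whereas $\mathrm{ord}_q(c_1)=(p-1)r+r(1-p)=0$, so $c_1=p\,q^{-r(p-1)}h^{p-1}$ is a unit. Hence $\bar g(X)=X^p+\bar c_1X-\bar b\in\kappa(q)[X]$ with $\bar c_1\neq 0$, where $\kappa(q)=\D/q$ has characteristic $p$. Then $\bar g'(X)=\bar c_1$, a nonzero constant, so $\gcd(\bar g,\bar g')=1$ and $\bar g$ is separable. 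Therefore $B/qB=\kappa(q)[X]/(\bar g(X))$ is a finite product of finite separable extensions of $\kappa(q)$, i.e.\ $\D\to B$ is \'etale. Consequently $B$ is regular, hence a normal domain with $\mathrm{Frac}(B)=\mathrm{Frac}(A)$; since $A\subseteq B\subseteq\overline A$ with $B$ normal, $\overline A=B$. So $\overline{S[\omega]}_{(q)}=\overline A=B$ is \'etale over $S_{(q)}$ for every height-one prime $(q)\supseteq(p)$, which is conclusion (1), and $\overline{S[\omega]}_{(q)}=\langle 1,q^{-r}(\omega-h),\dots,q^{-r(p-1)}(\omega-h)^{p-1}\rangle_{S[\omega]_{(q)}}$ with $h_q:=h$, which is conclusion (2); finally $r=(p-1)^{-1}\mathrm{ord}_q(p)\in\mathbb Z$ as already noted.

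The main obstacle is the numerical balancing in the middle step. The $\Gamma$-hypothesis is calibrated precisely so that, after the substitution $\omega=h+q^r\theta$ and division by $q^{pr}$, the element $\theta$ becomes integral (all $c_i\in\D$) yet its minimal polynomial modulo $q$ retains a \emph{nonzero} linear coefficient $\bar c_1$ and loses all higher ones; in characteristic $p$ this lone surviving term forces $\bar g$ separable and hence $\D\to B$ \'etale. Checking $\mathrm{ord}_q(c_1)=0$ on the nose — which is where one uses that $\Gamma_{q\D}(f)\geq pr$ with $r$ exactly $\mathrm{ord}_q(1-\epsilon)$ — and $\mathrm{ord}_q(c_i)\geq 1$ for $i\geq 2$ is the real content; the localization reduction, the integrality of $\theta$, and the identification $\overline A=A[\theta]=B$ are then routine.
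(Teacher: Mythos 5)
Your proof is correct, and at the structural level it follows the same plan as the paper: localize at each height-one prime $(q)\supseteq(p)$, use the $\Gamma$-hypothesis together with $p=u(1-\epsilon)^{p-1}$ to write $f=h^p+q^{pr}b$ with $r=\mathrm{ord}_q(1-\epsilon)$, introduce the same element $\theta=q^{-r}(\omega-h)$, and show that $\D[\theta]$ is \'etale over the DVR $\D=S_{(q)}$ and equals the normalization $\overline{S[\omega]}_{(q)}$. Where you diverge is in how the \'etale/separable step is verified. The paper invokes the identity $(\omega-h)^p=\omega^p-h^p-pC'(\omega-h)$ coming from \Cref{P6} and \Cref{l_imodP} (a formula lifted from \cite{KA}), rewrites the resulting degree-$p$ equation in the homogeneous variables $U=\omega-h$, $V=q^r$, and then analyzes an Artin–Schreier polynomial modulo $(q,\omega-h)$, splitting into the cases where it is irreducible (one maximal ideal, separable residue extension) or splits completely (exactly $p$ maximal ideals, trivial residue extensions), to conclude $A[\theta]$ is regular. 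You instead expand $(h+q^r\theta)^p$ directly by the binomial theorem, compute $\mathrm{ord}_q(c_i)=r(i-1)$ for every middle coefficient $c_i=\binom{p}{i}h^{p-i}q^{r(i-p)}$, observe that only $c_1$ survives modulo $q$ and is a unit, and note that $\bar g'=\bar c_1$ is a nonzero constant — so $\bar g$ is separable and $\D\to\D[\theta]$ is \'etale with no case split at all. Your version buys three small things: it is more elementary (raw binomial theorem rather than the $C'$ apparatus of \cite{KA}); it replaces the Artin–Schreier dichotomy with the one-line separability criterion $\gcd(\bar g,\bar g')=1$; and it works verbatim for $p=2$, whereas \Cref{P6} as stated in the paper assumes $p\geq 3$. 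The explicit form of $g$ and your identification $B=\D[\theta]\cong\D[X]/(g)$ also give conclusion (2) directly, rather than via the $\Hom_{\D[\omega]}(J_{n,h},\D[\omega])$ description in \Cref{CM_dual_prop}.
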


\begin{proof}
    (1) holds vacuously if $p\in S$ is a unit. Assume $p\in S$ is not a unit. Let $\omega$ be any root of the polynomial $W^p-f\in S[W]$ in a fixed algebraic closure of $\mathrm{Frac}(S)$ and set $R=\overline{S[\omega]}$. Let $q$ be any prime divisor of $p$. By \Cref{rem:normal_etale}, it suffices to show $S_q\to R_q$ is unramified. Write $f=h_q^p+q^nb_q$ for some $h_q,b_q\in S_{(q)}$ and $n\geq p/p-1\cdot \mathrm{ord}_Q(p)$. Since we work locally, we drop the lower index ``$q$". We have in $A:=S_{(q)}[\omega]$ :  

\begin{align}\label{eq1_Dw}
 q^{n}\cdot b &=\omega^p -h^p \nonumber
 \\
 &= (\omega-h)^p+p\cdot c'\cdot (\omega-h)
 \end{align}
where $c'\in A$ is the image in $A$ of the element $C'\in S_{(q)}[W]$ in \Cref{P6}. Let $\epsilon\in S$ be a primitive $p$-th root of unity. It is standard to see that $p=-(c'_{\epsilon})^{-1}(\epsilon-1)^{p-1}$ where $c'_{\epsilon}$ is the image in $\mathbb{Z}[\epsilon]$ of the corresponding element $C'_{\epsilon}\in  \mathbb{Z}[W]$ from \Cref{P6}. Note that this implies $\epsilon-1\in qS$ and $\mathrm{ord}_q(\epsilon-1)\cdot (p-1)=\mathrm{ord}_q(p)$. We have
\begin{align}\label{main_eq_Dw}
 (\omega-h)^p-((c'_{\epsilon})^{-1}\cdot c')(\omega-h)(\epsilon-1)^{(p-1)}-q^nb=0.
 \end{align}
Let $(\epsilon-1)=\mu\cdot q^r$ for some unit $\mu\in S_{(q)}$, so that $\mathrm{ord}_q(\epsilon-1)=r$. Setting $U:=(\omega-h)$ and $V:=q^r$, \Cref{main_eq_Dw} looks like
 \begin{align}\label{main_homogeneous}
 U^p-((c'_{\epsilon})^{-1}\cdot c'\cdot \mu^{p-1})UV^{p-1}-V^pb'=0 
 \end{align}
for some $b'\in A$. Thus $V^{-1}U$ is integral over $A$
 \begin{align}
 (U/V)^p-((c'_{\epsilon})^{-1}\cdot c'\cdot \mu^{p-1})(U/V)-b'=0.
 \end{align}
 We claim that $B:=A[U/V]$ is regular. Setting $P:=(q,\omega-h)A$, maximal ideals in $B$ correspond to height two primes in $A[X]/PA[X]$ containing the image of $Ker(\phi)$ where $\phi:A[X]\rightarrow A[U/V]$ is the natural $A$-algebra map. From, \Cref{l_imodP} we have
 \begin{align}\label{main_homogeneous_modp}
 X^p-((c'_{\epsilon})^{-1}\cdot c'\cdot \mu^{p-1})X-b'\equiv X^p-(h\mu)^{p-1}X-b'\;mod\:(PA[X]).
 \end{align}
By hypothesis, $h\in A$ is a unit and hence the image of \Cref{main_homogeneous_modp} in $(A/P)[X]$ is irreducible over $A/P$ if and only if the Artin-Schreier polynomial $(X/h\mu)^p-(X/h\mu)-b'/(h\mu)^p$ is irreducible. If this polynomial is irreducible, then since $S_{(q)}$ is universally catenary, it follows that there exists a unique maximal ideal $Q$ of $B$ satisfying $QB_Q=PB_Q=qB_Q$. Moreover, the associated extension of residue fields is separable since it is given by a Galois extension of order $p$. If the polynomial is reducible, then necessarily
 \begin{align}\label{main_homogeneous_modp_2}
X^p-((c'_{\epsilon})^{-1}\cdot c'\cdot \mu^{p-1})X-b'\equiv \prod_{i=1}^p(X-\beta-ih\mu)\;mod\:(PA[X])
 \end{align}
 for some $\beta\in A$. Therefore, there are exactly $p$ maximal ideals in $B$, say $Q_1,\dots,Q_p$, satisfying $Q_iB_{Q_i}=(P,U/V-\beta-ih)B_{Q_i}=(\alpha)B_{Q_i}$. Moreover, the associated extension of residue fields in each case is trivial and in particular separable. Thus $B$ is regular and (1) holds. The assertion (2) also follows from what we have just showed.
\end{proof}

\begin{proof}[Proof of Theorem-\ref{thm:p_ramification_characterization}]

  Since $S$ possesses a primitive $p$-th root of unity, if $f\in S$ is a $p$-th power, then $\overline{S[\omega]}=S$, so trivially the associated extension is $p$-unramified. The remainder of the implication (3) $\implies$ (1) is the content of \Cref{lem:forward_implication}(1). The implication (1) $\implies (2)$ is trivial. We now prove the contrapositive of (2) $\implies$ (3). Let $0\neq f\in S$ and $\omega$ any root of the polynomial $W^p-f\in S[W]$ in a fixed algebraic closure of $L:=\mathrm{Frac}(S)$. Assume $f\in S$ is not a $p$-th power. Set $R=\overline{S[\omega]}$. Suppose there exists a prime divisor $q\in S$ of $p$ such that $\Gamma_{qS_{(q)}}(f)<\dfrac{p}{p-1}\mathrm{ord}_q(p)$. We will show that $S\to \overline{S[\omega]}$ is not tamely ramified over $p$. If $\Gamma_{qS_{(q)}}(f)=0$, then $S_{(q)}\to S_{(q)}[\omega]$ induces a purely inseparable extension of residue fields. Hence $S_{(q)}\to R_{(q)}$ is not tamely ramified over $p$. Now assume $\Gamma_{qS_{(q)}}(f)=n\geq 1$. Let $\epsilon\in S$ be a primitive $p$-th root of unity. It is standard to see that $p=-(c'_{\epsilon})^{-1}(\epsilon-1)^{p-1}$ where $c'_{\epsilon}$ is the image in $\mathbb{Z}[\epsilon]$ of the corresponding element $C'_{\epsilon}\in  \mathbb{Z}[W]$ from \Cref{P6}. Note that this implies $\epsilon-1\in qS$ and $\mathrm{ord}_q(\epsilon-1)\cdot (p-1)=\mathrm{ord}_q(p)$. Write $f=h^p+q^nb$ for some $b,h\in S_{(q)}$. Let $(\epsilon-1)=\mu\cdot q^r$ for some unit $\mu\in S_{(q)}$, so that $\mathrm{ord}_q(\epsilon-1)=r$. As in the proof of \Cref{lem:forward_implication} we have in $A:=S_{(q)}[\omega]$:

    \begin{align}
 (\omega-h)^p-((c'_{\epsilon})^{-1}\cdot c'\cdot \mu^{p-1})(\omega-h)q^{r(p-1)}-q^nb=0.
 \end{align} 
Write $n=kp+s$ with $0\leq s<p$. Note that $kp\leq n<pr$ implies $k<r$. Dividing by $q^{kp}$ and setting $\beta=q^{-k}(\omega-h)\in L(\omega)$, we have

 \begin{align}\label{eq:beta}
 \beta^p-(c'_{\epsilon})^{-1}\cdot c'\cdot (\mu q^{r-k})^{p-1}\beta-q^sb=0 
 \end{align} 
an integral equation for $\beta$ over $A$. Suppose $s=0$. We claim that this implies $\Gamma_{qS_{(q)}}(b)=0$. To see this , let if possible $b=u^p+qv$ for some $u,v\in S_{(q)}$. Then $f=h^p+q^{kp}(u^p+qv)=(h+q^ku)^p+q^{kp+1}v+pq^ku\alpha$ for some $\alpha\in S_{(q)}$. Now, $k+\mathrm{ord}_q(p)>k+\dfrac{(p-1)n}{p}=n/p+(p-1)n/p=n$. This implies $\Gamma_{qS_{(q)}}(f)\geq n+1$, a contradiction. Thus if $s=0$, then $\Gamma_{qS_{(q)}}(b)=0$. Noting that the extension $S_{(q)}\to S_{(q)}[\omega]$ induces a trivial extension of residue fields, one sees that if $s=0$, then $A[\beta]$ is local and the extension $S_{(q)}\to A[\beta]$ induces a purely inseparable extension of residue fields. Thus, if $s=0$, $S_{(q)}\to R_{(q)}$ is not tamely ramified over $p$. Now, assume $s\geq 1$. From \Cref{eq:beta}, one sees that in the local ring $A[\beta]$, $q^s$ is an associate of $\beta^p$. This continues to hold after localization at each maximal ideal of $\overline{S[\omega]}$. Thus $p$ divides the order of $q^s$ at each maximal ideal of $\overline{S[\omega]}$. Since $0\leq s<p-1$, this implies $p$ divides the order of $q$ at each maximal ideal of $\overline{S[\omega]}$. Thus $S_{(q)}\to R_{(q)}$ is not tamely ramified over $p$.
 
  Finally, suppose $f$ is divisible by some prime divisor $q$ of $p$. We may assume that $\mathrm{ord}_q(f)\leq p-1$. It is then easily seen that $\Gamma_{qS_{(q)}}(f)\leq p-1 <\dfrac{p}{p-1}\mathrm{ord}_q(p)$, so that by what we have just shown $S_{(q)}\to R_{(q)}$ is not tamely ramified over $p$. Thus the proof of the implication (2) $\implies$ (3) and the theorem is complete.
\end{proof}

We now proceed to define tamely $p$-ramified and tamely $p$-ramified type abelian extensions of an unramified regular local ring of mixed characteristic $p>0$. Let $S$ denote a semi-local regular ring such that $L:=\mathrm{Frac}(S)$ has characteristic zero. Assume that $S$ possesses a primitive $n$-th root of unity for an integer $n$. Let $K/L$ be a finite abelian extension of exponent $n$. By Kummer theory, $K$ corresponds to a unique subgroup $U$ of $L^{\times}$ containing $L^{\times n}$ as a subgroup of index equal to $[K:L]$. Since $S$ is a unique factorization domain, each coset of $U$ in turn corresponds to a unique element of the monoid $S/S^n$. For each non trivial class in  $S/S^n$, there is a unique element (up to multiplication by an element of $(S^{\times})^n$) in $S$ representing it satisfying the condition that its order at each of its prime divisors is at most $n$. We call such a representative in $S$ corresponding to a coset $U$ a \textit{canonical element} for $K/L$ in $S$. The \textit{canonical divisors} of $K/L$ in $S$ are the union of the prime divisors of each canonical element (possibly empty). \footnote{Our usage of the term canonical divisor is specific to our setting and is not meant to suggest any connection with its typical usage in algebraic geometry.}

\begin{ex}
    Suppose $K/L$ above has order $p$ for $p\in \mathbb{Z}$ a prime integer. Then $K=L(\omega)$ for $\omega$ a root of the irreducible polynomial $W^p-f\in L[W]$, for $W$ an indeterminate over $L$. Moreover, $K=L(\mu)$ for $\mu$ a root of $W^p-g\in L[W]$ if and only if $g\in \cup_{i=0}^{p-1} f^i\cdot (L^{\times})^p$. There is a unique element (up to multiplication by an element of $(S^{\times})^p$) $g\in (f\cdot (L^{\times})^p)\cap S$ such that $g=\lambda a_1^{e_1}\dots a_r^{e_r}$, where $\lambda\in S$ is a unit, $a_1,\dots,a_r\in S$ are primes and $0\leq e_1,\dots,e_r\leq p-1$. Then $g$ is a canonical element for $K/L$ in $S$ corresponding to the coset $f\cdot (L^{\times})^p$ and is unique up to multiplication by an element of $(S^{\times})^p$. The canonical elements corresponding to the cosets $f^i\cdot L^p$ divide a sufficiently large power of $g$ for each $0\leq i\leq p-1$. Hence the canonical divisors of $K/L$ in $S$ are $a_1,\dots,a_r$ (possibly empty).
\end{ex}

\begin{ex}
  Now let $K/L$ be a finite abelian extension with Galois group $G:=\mathbb{Z}/p\mathbb{Z}^{\oplus n}$ for $p\in \mathbb{Z}$ a prime integer. Let $K_1,\dots, K_n$ denote the fixed fields of the $n$ subgroups of $G$ obtained by taking the quotient by each copy of $\mathbb{Z}/p\mathbb{Z}$. The canonical divisors of $K/L$ are the union of the divisors of each $K_i/L$.
\end{ex}

Now let $R$ be the integral closure of $S$ in $K$. Let $X\subseteq \spec(S)$ be the ramification locus of the map $S\to R$ and let $X_0\subseteq X$ the subset of codimension one primes. Then $V(X_0)=X$. To see this, note that a map of rings $A\to B$ is unramified if and only if it is of finite type and the module of \"{K}ahler differentials $\Omega_{B/A}=0$. Since $\Omega_{B/A}$ commutes with localization, it follows that $V(X_0)\subseteq X$. The reverse inclusion $X\subseteq V(X_0)$ follows from the purity of branch locus. For each prime integer divisor $q$ of $n$, let $\mathfrak{X}_q$ denote the prime divisors of $q$ in $S$, and $\mathfrak{X}$ the union of all the $\mathfrak{X}_q$.

\begin{prop}\label{prop:ramification_locus}
  Let $\mathfrak{C}$ denote the canonical divisors of $K/L$ in $S$. We then have
    \[X_0\backslash \mathfrak{X}\subseteq \mathfrak{C}\subseteq X_0\subseteq \mathfrak{C}\cup \mathfrak{X}.\]
\end{prop}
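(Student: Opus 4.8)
The plan is to reduce the entire chain of inclusions to codimension one and then analyze Kummer extensions of the discrete valuation rings $S_P$, $P$ a height one prime of $S$. Note that only $\mathfrak C\subseteq X_0$ and $X_0\subseteq\mathfrak C\cup\mathfrak X$ need proof, the latter being a restatement of $X_0\setminus\mathfrak X\subseteq\mathfrak C$; once both hold, the whole display follows. First I would record the local picture: by Kummer theory $K=L(\{\sqrt[n]{a}\colon a\in\mathcal A\})$, where $\mathcal A$ is the finite set of canonical elements, each $a\in\mathcal A$ factoring in the UFD $S$ as $a=\lambda\prod_j\pi_j^{e_j}$ with $\lambda$ a unit, the $\pi_j$ distinct primes, and $0<e_j<n$ (after removing $n$-th powers of primes the prime-divisor exponents of a canonical element are $<n$), so that $\mathfrak C$ is exactly the set of height one primes $(\pi_j)$ arising this way. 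For a height one prime $P$ of $S$, $S_P$ is a DVR and $R_P$ --- the localization of $R$, equivalently the integral closure of $S_P$ in $K$ --- is a semilocal Dedekind domain; since $X_0$ consists of the height one members of the ramification locus, it suffices to decide, for each such $P$, whether $S_P\to R_P$ is ramified.

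To get $\mathfrak C\subseteq X_0$, I would take $P=(\pi_j)\in\mathfrak C$ dividing a canonical element $a$ and set $e:=\mathrm{ord}_P(a)=e_j\in\{1,\dots,n-1\}$. Since $n\nmid e$, any valuation $w$ on $L(\sqrt[n]{a})$ extending $\mathrm{ord}_P$ has $n\,w(\sqrt[n]{a})=e$, hence $w(\sqrt[n]{a})=e/n$, forcing the ramification index of $w$ over $P$ to be at least $n/\gcd(n,e)>1$; so $L(\sqrt[n]{a})/L$ ramifies at $P$. Since $L(\sqrt[n]{a})\subseteq K$ and ramification indices multiply in towers of Dedekind extensions, some prime of $R$ over $P$ has ramification index $>1$, so $P$ lies in the ramification locus, and being of height one, $P\in X_0$.

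For $X_0\setminus\mathfrak X\subseteq\mathfrak C$ (equivalently $X_0\subseteq\mathfrak C\cup\mathfrak X$), I would take $P\in X_0$ with $P\notin\mathfrak X$, so that no prime integer divisor of $n$ lies in $P$ and hence $n\in S_P^\times$, and argue by contradiction: if $P$ divided no canonical element, then --- using that the prime-divisor exponents of each $a\in\mathcal A$ are $<n$ --- we would have $\mathrm{ord}_P(a)\equiv 0\pmod n$ for every $a\in\mathcal A$, so each $a$ is a unit of $S_P$ times an $n$-th power in $L$, whence $K=L(\{\sqrt[n]{u_a}\colon a\in\mathcal A\})$ with $u_a\in S_P^\times$. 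Each $X^n-u_a$ has unit discriminant over $S_P$ (both $n$ and $u_a$ being units), so the $S_P$-subalgebra of $K$ generated by the $\sqrt[n]{u_a}$ is finite étale over $S_P$; being a one-dimensional domain and a quotient of a finite product of DVRs, it is itself a DVR with fraction field $K$, hence equals $R_P$. Then $S_P\to R_P$ is unramified, contradicting $P\in X_0$; so $P$ divides some canonical element, i.e.\ $P\in\mathfrak C$. This gives the first and third inclusions of the display, and with $\mathfrak C\subseteq X_0$ the proof is complete.

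The hard part will be the Kummer-theory-over-a-DVR input, in both directions: that adjoining an $n$-th root of an element whose $P$-valuation is not divisible by $n$ always ramifies $S_P$ and that this ramification survives in the larger field $K$; and that, when $n$ is a unit, adjoining $n$-th roots of units yields a finite étale --- hence normal, hence integrally closed --- extension, so that the absence of a canonical divisor at $P$ forces $P$ unramified. A secondary point needing care is the bookkeeping that $\mathrm{ord}_P$ is a homomorphism on $U/L^{\times n}$, so it is nonzero on that group exactly when it is nonzero on one of the generating canonical elements, together with the fact that a canonical element has all its prime-divisor exponents $<n$ --- which is what makes ``$\mathrm{ord}_P(a)\equiv 0\pmod n$'' equivalent to ``$P\nmid a$''.
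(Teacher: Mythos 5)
Your proof is correct and follows essentially the same strategy as the paper's: reduce to codimension one, show that primes outside $\mathfrak{C}\cup\mathfrak{X}$ are unramified because all canonical elements and $n$ become units there (the paper invokes the cited Huneke--Katz propositions where you compute the discriminant of $X^n-u$ directly), and show that primes in $\mathfrak{C}$ ramify because a canonical element has $P$-order strictly between $0$ and $n$, forcing ramification index $n/\gcd(n,e)>1$ in the cyclic subextension and hence, by multiplicativity in towers, in $K$ (the paper dismisses this last step as ``clear''). One small slip worth noting: the finite \'etale $S_P$-subalgebra of $K$ you construct need not be a DVR --- $P$ can split, making it a semi-local Dedekind domain rather than a quotient of a product of DVRs --- but all your argument actually requires is that a finite \'etale algebra over a DVR is normal, so your subalgebra is integrally closed in its fraction field $K$ and therefore equals $R_P$; the conclusion that $S_P\to R_P$ is unramified is unaffected.
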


\begin{proof}
    Consider a height one prime $Q$ of $S$ outside $\mathfrak{C}\cup \mathfrak{X}$. By Kummer theory and \Cref{HK2}, $R_Q=S_Q[\sqrt[n]{f_1},\dots,\sqrt[n]{f_r}]$ where $f_1,\dots,f_r$ are the canonical elements of $K/L$ in $S$. Since $Q$ is coprime to the canonical divisors of $K/L$ in $S$ and the prime divisors of $n$, it follows that the induced extensions of residue fields are all separable. From \Cref{HK1}, it then follows that $S_Q\to R_Q$ is unramified. This gives the inclusions $X_0\backslash \mathfrak{X}\subseteq \mathfrak{C}$ and $X_0\subseteq \mathfrak{C}\cup \mathfrak{X}$. Now clearly if $c\in \mathfrak{C}$ is a divisor of a canonical element $f$, then $c$ ramifies in the extension $S\to \overline{S[\sqrt[n]{f}]}$ and hence ramifies in $S\to R$, i.e., $c\in X_0$. This completes the proof.  
\end{proof}

\begin{rem}
    If $S$ is local and $n\in S$ is a unit (in particular under the setup of \Cref{thm:Roberts}), \Cref{prop:ramification_locus} is saying $\mathfrak{C}=X_0$, i.e., the canonical divisors of $K/L$ in $S$ are precisely the height one primes in $S$ that ramify along $S\to R$.
\end{rem}

\begin{dfn}\label{def:p-unramified_Abelian}
  Let $S$ be a semi-local regular ring with $L:=\mathrm{Frac}(S)$ having characteristic zero and $p\in \mathbb{Z}$ a prime integer. Assume $S$ possesses a primitive $n$-th root of unity and let $K/L$ be a finite abelian extension whose Galois group has exponent $n$. Let $R$ be the integral closure of $S$ in $K$.  We say $K/L$ is \textit{tamely $p$-ramified over $S$} if $S\to R$ is tamely $p$-ramified and either the set of canonical divisors are empty or the canonical divisors are tamely $p$-ramified over $n$.
    \par By construction, the set of canonical divisors for $K/L$ in $S$ are uniquely determined, so the property of being a tamely $p$-ramified abelian extension is intrinsic to the given extension.
\end{dfn}

\begin{rem}\label{rem:canonical_divisors_modular_setup}
Assume notation as in \Cref{def:p-unramified_Abelian}. Suppose $S$ is local of mixed characteristic $p$ and $p\mid n$. Then if $S\to R$ is tamely $p$-ramified (in particular if $K/L$ is tamely $p$-ramified over $S$), we have $\mathfrak{C}=X_0\backslash V(p)$, i.e., the canonical divisors of $K/L$ in $S$ are precisely the codimension one primes in $S$ away from $p$ that ramify in $R$. To see this, note that in \Cref{prop:ramification_locus}, $\mathfrak{X}$ is nothing but the prime divisors of $p$ in $S$. Thus, $X_0\backslash V(p)\subseteq \mathfrak{C}$ by \Cref{prop:ramification_locus}. Now let if possible $q\in \mathfrak{C}$ for $q$ a prime divisor of $p$ in $S$. Then if $q$ is a divisor of a canonical element $f\in S$, one sees that $S\to \overline{S[\sqrt[p]{f}]}$ is not tamely $p$-ramified. Hence $S\to \overline{S[\sqrt[n]{f}]}$ and $S\to R$ are not tamely $p$-ramified. This is a contradiction. Thus $\mathfrak{C}\cap V(p)=\emptyset$ and we see from \Cref{prop:ramification_locus} that $\mathfrak{C}\subseteq X_0\backslash V(p)$.
\end{rem}

\begin{dfn}\label{def:quasi_p_unramified}
    Let $S,L,K$ and $p$ be as in \Cref{def:p-unramified_Abelian}. We say $K/L$ is \textit{of tamely $p$-ramified type over $S$} if there exists a module finite injective map of regular rings $f:S\to T$ such that 

    \begin{enumerate}
        \item $f_*:\spec(T)\to \spec(S)$ is injective in codimension one on the fiber over $V(p)$ and
        \item for all $Q\in V(p)\subseteq \spec(S)$, $ord_Q(p)=ord_{f_*^{-1}(Q)}(p)$ and
        \item $\mathrm{Frac}(T)K/\mathrm{Frac}(T)$ is tamely $p$-ramified over $T$.
    \end{enumerate}
\end{dfn}

\begin{ex}
    Let $L=\mathbb{Q}(X)$ for $X$ an indeterminate over $\mathbb{Q}$ and $K:=L(\omega)$ for $\omega$ a root of the polynomial $Y^2-X-4\in L[Y]$. Consider the two dimensional regular local subring $S\subseteq L$ defined as $S:=\mathbb{Z}[X]_{(2,X)}$. Then $K/L$ is not tamely $2$-ramified over $S$. To see this, note that $S[\omega]$ is integrally closed and that that $S\to S[\omega]$ is not \'{e}tale in codimension one over $2$.  Consider the extension of regular local rings $S\to T$ for $T:=\mathbb{Z}[\sqrt{X}]_{(2,\sqrt{X})}$. Then $\mathbb{Q}(\sqrt{X})(\omega)/\mathbb{Q}(\sqrt{X})$ is tamely $2$-ramified over $T$ by \Cref{thm:p_ramification_characterization}. Thus, $K/L$ is of tamely $2$-ramified type over $S$.
\end{ex}

Note that if $S$ is an unramified regular local ring of mixed characteristic $p>0$, then $S$ does not possess a primitive $p$-th root of unity unless $p=2$.

\begin{dfn}\label{p-unramified_unramified_RLR}
    Let $S$ be an unramified regular local ring with $L:=\mathrm{Frac}(S)$ having characteristic zero and $p\in \mathbb{Z}$ a prime integer. Let $K/L$ be a finite abelian extension with exponent $n$. Then $K/L$ is \textit{(resp. of tamely $p$-ramified type) tamely $p$-ramified} over $S$ if $K(\epsilon)/L(\epsilon)$ is (resp. of tamely $p$-ramified type) tamely $p$-ramified over $S[\epsilon]$ for some (equivalently every) primitive $n$-th root of unity $\epsilon$.
\end{dfn}

\begin{rem}
    Note that \Cref{p-unramified_unramified_RLR} is well defined due to \Cref{prop:RLR_epsilon} and the fact that if $\epsilon$ and $\epsilon'$ are distinct primitive $n$-th roots of unity in a fixed algebraic closure of $L$, then $S[\epsilon]=S[\epsilon']$.
\end{rem}

\begin{ex}\label{ex:Roberts}
    Assume notation as in \Cref{thm:Roberts}. In addition, assume $S$ is an unramified regular local ring of mixed characteristic $p>0$. Then $K/L$ is tamely $p$-ramified over $S$. To see this, let $n$ be the exponent of $\mathrm{Gal}(K/L)$ and $\epsilon$ a primitive $n$-th root of unity. Let the exponent of $\mathrm{Gal}(K(\epsilon)/L(\epsilon))$ be $m$. If $R$ is the integral closure of $S$ in $K(\epsilon)$, then as in the proof of \Cref{thm:Roberts}, one constructs a small Cohen-Macaulay algebra $R\to T$. By construction, it follows $S[\epsilon]\to T$ is tamely $p$-ramified, hence, $S[\epsilon]\to R$ is tamely $p$-ramified. Moreover, if $f\in S[\epsilon]$ is a canonical divisor for $K(\epsilon)/L(\epsilon)$ and is not equal to $\epsilon^{n/p}-1$, then $S[\epsilon][\sqrt[m]{f}]$ is integrally closed by \Cref{HK2}. Since $m$ is coprime to $p$ it follows that $S[\epsilon]\to S[\epsilon][\sqrt[m]{f}]$ is $p$-unramified (in particular tamely $p$-ramified). If $\epsilon^{n/p}-1$ is a canonical divisor, then $S[\epsilon,\sqrt[m]{\epsilon^{n/p}-1}]=S[\sqrt[m]{\epsilon^{n/p}-1}]$ is regular local and $S[\epsilon]\to S[\sqrt[m]{\epsilon^{n/p}-1}]$ is tamely $p$-ramified.
\end{ex}

%%%%%%%%%%%%%%%%%%%%%%%%%%%%%%%%%%%%%
\section{Abelian extensions with $p$-torsion annihilated by $p$}
%%%%%%%%%%%%%%%%%%%%%%%%%%%%%%%%%%%%%

In this section, we prove \Cref{thm:roberts_extension}, an extension of Roberts's theorem to mixed characteristic. We work under the hypothesis that the base regular ring is unramified. Our arguments use this assumption in an essential way: for instance to preserve regularity upon adjunction of a primitive $p$-th root of unity, see \Cref{rem:primitive_root_of unity}. Note that in this case, under the assumptions of \Cref{thm:Roberts} (i.e. the non-modular case), a generically abelian extension is automatically tamely $p$-ramified and the $p$-torsion in the Galois group is annihilated by $p$. The former follows from \Cref{ex:Roberts} and the latter is trivial, since there is no $p$-torsion in the Galois group. Thus the following theorem is an extension of Roberts's theorem to mixed characteristic $p>0$. We also illustrate our results by discussing how they apply to an example of Koh in \cite{KO} that exhibits the failure of Roberts's theorem in the modular case.

\begin{thm}\label{thm:roberts_extension}
Let $S$ be an unramified regular local ring of mixed characteristic $p>0$ with quotient field $L$. Let $K/L$ be a finite abelian extension with $p$-torsion of the Galois group $\mathrm{Gal}(K/L)$ annihilated by $p$ and $R$ the integral closure of $S$ in $K$. If $K/L$ is \textit{of tamely $p$-ramified type over $S$}, then $R$ admits a small Cohen-Macaulay algebra.
\end{thm}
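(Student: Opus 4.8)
The plan is to run the Kummer-theoretic argument of \Cref{thm:Roberts}, using \Cref{CM_dual_prop} to supply the missing freeness (over the base) of the relevant normalization in the modular $p$-primary part.

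\emph{Reductions.} Let $n$ be the exponent of $\mathrm{Gal}(K/L)$. First pass to $S[\epsilon]$ for $\epsilon$ a primitive $n$-th root of unity: by \Cref{prop:RLR_epsilon} (this is the one place the hypothesis that $S$ is unramified is used in an essential way) $S[\epsilon]$ is a regular semi-local domain, $\mathrm{Gal}(K(\epsilon)/L(\epsilon))$ is a subgroup of $\mathrm{Gal}(K/L)$ (so its $p$-torsion is still annihilated by $p$), $K(\epsilon)/L(\epsilon)$ is still of tamely $p$-ramified type over $S[\epsilon]$ by \Cref{p-unramified_unramified_RLR}, and the integral closure $R'$ of $S[\epsilon]$ in $K(\epsilon)$ contains $R$ and is module-finite over it; hence it suffices to produce a small Cohen-Macaulay algebra over $R'$. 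Next apply \Cref{def:quasi_p_unramified}: choose a module-finite injective map of regular rings $S[\epsilon]\to T$ that is injective in codimension one on the fiber over $V(p)$, preserves $\mathrm{ord}_Q(p)$ at the height-one primes $Q$ over $p$, and over which $\mathrm{Frac}(T)K(\epsilon)/\mathrm{Frac}(T)$ is tamely $p$-ramified in the sense of \Cref{def:p-unramified_Abelian}; again the integral closure over $T$ contains $R'$ and is module-finite over it. After renaming we may therefore assume: $S$ is a regular semi-local domain with a primitive $n$-th root of unity, the height-one primes of $S$ over $p$ are ``separated'' with controlled $\mathrm{ord}_Q(p)$ (inherited, via conditions (1),(2) of \Cref{def:quasi_p_unramified}, from the single prime $(\epsilon-1)$ of $S[\epsilon]$ over $p$), $\mathrm{Gal}(K/L)\cong (\mathbb{Z}/p\mathbb{Z})^{a}\times G'$ with $p\nmid |G'|$ and $m:=$ the exponent of $G'$, and $K/L$ is tamely $p$-ramified over $S$; possibly after localizing we may also take $S$ local.

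\emph{Set-up.} Write $K=L(\sqrt[p]{f_1},\dots,\sqrt[p]{f_a},\sqrt[m]{g_1},\dots,\sqrt[m]{g_b})$ with the $f_i,g_j\in S$ the canonical elements of $K/L$ in $S$, and let $q_1,\dots,q_c$ (primes) and $\lambda_1,\dots,\lambda_s$ (units) be the irreducible and unit factors occurring among them. The $q_l$ are the canonical divisors; by \Cref{rem:canonical_divisors_modular_setup} they are coprime to $p$, and by hypothesis together with \Cref{thm:p_ramification_characterization} they satisfy $\Gamma_{QS_Q}(q_l)\ge \tfrac{p}{p-1}\mathrm{ord}_Q(p)$ for each height-one $Q$ over $p$. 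Since $S\to\overline{S[\sqrt[p]{f_i}]}$ is a subextension of $S\to R$, tameness of the ramification over $p$ passes to it, so each $f_i$ is tamely $p$-ramified over $p$; combining the resulting $\Gamma$-bound for $f_i$ with those for the $q_l$ (which are units at the primes over $p$) forces the same bound for each $\lambda_k$, whence each $\lambda_k$ is tamely $p$-ramified over $p$, and — adjoining an $m$-th root being harmless over $p$ as $p\nmid m$ — tamely $p$-ramified over $n$.

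\emph{Construction.} Enlarge $K$ to $E:=L(\sqrt[n]{q_1},\dots,\sqrt[n]{q_c},\sqrt[n]{\lambda_1},\dots,\sqrt[n]{\lambda_s})\supseteq K$ (each $f_i,g_j$ is a product of these to admissible exponents). Since $R$ embeds module-finitely into the integral closure $T$ of $S$ in $E$, it suffices to show $T$ is Cohen-Macaulay, and I would build $T$ in two stages. First, $T_1:=\overline{S[\sqrt[p]{q_1},\dots,\sqrt[p]{q_c},\sqrt[p]{\lambda_1},\dots,\sqrt[p]{\lambda_s}]}$: for a single $p$-th root, \Cref{CM_dual_prop} applies with $\mathfrak{D}=S$ (a Gorenstein domain; the factor, being a prime or a unit, is not a $p$-th power and has the required $\Gamma$-bound), and — identified via \Cref{lem:forward_implication} with the relevant dual module — it shows each $\overline{S[\sqrt[p]{q_l}]}$ and $\overline{S[\sqrt[p]{\lambda_k}]}$ is a maximal Cohen-Macaulay $S$-module, hence a free $S$-module since $S$ is regular; by \Cref{kernel_lemma} (the field degrees being multiplicative by Kummer theory and \Cref{lem:deg_extension}, and a finite tensor product of free modules being free, hence torsion-free) $T_1$ is the tensor product over $S$ of these pieces, so $T_1$ is a free $S$-module and a Cohen-Macaulay ring. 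Second, inside $T_1$ each $\sqrt[p]{q_l}$ is square-free (a uniformizer at each height-one prime over $q_l$, since $q_l$ was square-free in $S$ and $X^p-q_l$ is Eisenstein at $(q_l)$), the $\sqrt[p]{q_l}$ lie in no common height-one prime of $T_1$, the $\sqrt[p]{\lambda_k}$ are units, and $m$ is a unit in $T_1$; so by \Cref{HK2} (together with \cite{lang}, Theorem 9.1, and \Cref{lem:deg_extension} for the unit part) $T_1[\sqrt[m]{\sqrt[p]{q_1}},\dots,\sqrt[m]{\sqrt[p]{q_c}},\sqrt[m]{\sqrt[p]{\lambda_1}},\dots]$ is a normal domain with fraction field $E$ and free over $T_1$, so it equals $T$, is free over $S$, and hence is Cohen-Macaulay. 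Then $R\hookrightarrow T$ is the desired small Cohen-Macaulay algebra.

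\emph{Main obstacle.} The hard part is the first stage: securing the hypotheses of \Cref{CM_dual_prop}. This is exactly where the precise conditions of \Cref{def:quasi_p_unramified} — codimension-one injectivity of the fiber over $V(p)$ and preservation of $\mathrm{ord}_Q(p)$ — are needed, to reduce (by localizing at the height-one primes over $p$, treating each separately and patching) to the case of a Gorenstein domain with a single principal associated prime of $p$ for which \Cref{CM_dual_prop} is literally stated, and to ensure that the dual module produced there is the full normalization $\overline{S[\sqrt[p]{q_l}]}$, not merely its localization over $p$. It is also here that the hypothesis ``$p$-torsion of $\mathrm{Gal}(K/L)$ annihilated by $p$'' is indispensable: it confines the $p$-primary part of the construction to single $p$-th roots, where \Cref{CM_dual_prop} is available, rather than $p^{r}$-th roots with $r>1$. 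A subsidiary point — that the compositum in \Cref{kernel_lemma} is actually normal, so really equals $T$ — is handled by the two-stage order above, in which $m$-th roots (with $m$ a unit) are only ever adjoined on top of an already normal ring, keeping \Cref{HK1} and \Cref{HK2} applicable.
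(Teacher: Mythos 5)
Your proposal tracks the paper's proof quite closely: pass to $S[\epsilon]$ and then to a regular ring $T$ via \Cref{def:quasi_p_unramified}; separate the extension into a $p$-th root stage and a prime-to-$p$ stage; show the normalization of each single $p$-th root extension is Cohen--Macaulay (in fact $T$-free) by \Cref{CM_dual_prop} together with \Cref{lem:forward_implication}; glue via \Cref{kernel_lemma}; and conclude the full normalization is free over the $p$-th root stage by \Cref{HK2}. That is exactly the architecture of the paper's argument.

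There are, however, two genuine gaps. First, your argument that every unit factor $\lambda_k$ has the required $\Gamma$-bound only applies to those $\lambda_k$ that occur as the unit part of a $p$-primary canonical element $f_i$: there you divide $f_i$ by the $q_l$ and use that $f_i$ and $q_l$ both satisfy the bound. But a $\lambda_k$ may occur only as the unit part of a prime-to-$p$ canonical element $g_j$, and for such a $g_j$ you have no $\Gamma$-bound (tame $p$-ramification of $\overline{S[\sqrt[m]{g_j}]}$ with $p\nmid m$ tells you nothing, since adjoining a prime-to-$p$ root is always unramified over $p$). The fix is to observe, via a B\'ezout argument, that a coset of order coprime to $p$ in $U/L^{\times n}$ is represented by an actual $p$-th power in $T$, so its unit factor is a $p$-th power and $\Gamma(\lambda_k)=\infty$; but this must be said. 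The paper sidesteps the issue by taking $m$-th roots of \emph{all} canonical elements at once with $m$ the full exponent (so $p\mid m$), whence every $\sqrt[p]{g_i}$ lies in $\mathfrak{K}=\mathrm{Frac}(\mathfrak{R})$ and inherits tameness from $\mathfrak{R}$ directly. Second, your ``main obstacle'' paragraph correctly flags but does not resolve the passage from the localization $T_{(\psi-1)}$ to $T$ itself. The substance of conditions (1) and (2) of \Cref{def:quasi_p_unramified} (together with the \'etaleness of $S[\psi]\to S[\epsilon]$, $\psi=\epsilon^{n/p}$) is precisely that $T$ has a \emph{unique} height-one prime over $p$, namely $(\psi-1)T$, with $\mathrm{ord}(p)=p-1$ and $T/(\psi-1)T$ normal; this is what lets one write each $f_i=h_i^p+(\psi-1)^p d_i$ \emph{globally} in $T$ and apply \Cref{CM_dual_prop} with $\mathfrak{D}=T$, then identify $\Hom_{T[\mu_i]}(J_i,T[\mu_i])$ with the full normalization $\overline{T[\mu_i]}$ using \cite[Proposition 2.1(i)]{KA} and \Cref{HK2}. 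Speaking of ``localizing at the height-one primes over $p$, treating each separately and patching'' suggests this point was not actually nailed down, and in particular the discarding of dependent $\mu_i$ before applying \Cref{kernel_lemma} should also be made explicit.
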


\begin{proof}
Let $n$ denote the exponent of $K/L$. Fix an algebraic closure of $L$ and let $\epsilon$ be any primitive $n$-th root of unity in it. Choose a map of regular rings $S[\epsilon]\to T$ satisfying the conditions of \Cref{def:quasi_p_unramified} ($S[\epsilon]$ is regular by \Cref{prop:RLR_epsilon}). Set $\psi=\epsilon^{n/p}$. Note that $\psi-1$ is the unique prime divisor of $p$ in $S[\psi]$ and $\mathrm{ord}_{\psi-1}(p)=p-1$. Note that $S[\psi]\to S[\epsilon]$ is \'{e}tale. By purity of branch locus, it suffices to check this in codimension one. Let $F\in S[\psi][X]$ be the minimal polynomial of $\epsilon$ over $L(\psi)$. Note that $n/p$ is a unit in $S[\psi]$. Since $F$ divides $X^{n/p}-1\in L[X]$ and the latter is separable modulo any height one prime of $S[\psi]$, it follows that $S[\psi]\to S[\epsilon]$ is \'{e}tale in codimension one. Thus, $\psi-1$ is the unique prime divisor of $p$ in $S[\epsilon]$ and $\mathrm{ord}_{(\psi-1)S[\epsilon]}(p)=p-1$. These continue to hold in $T$ due to conditions (1) and (2) in \Cref{def:quasi_p_unramified}. 

\par Let $\mathfrak{K}$ be the compositum of $K$ and $\mathrm{Frac}(T)$. If $m$ is the exponent of the Galois group of $\mathfrak{K}/\mathrm{Frac}(T)$, then by Kummer theory, choose canonical elements $g_1,\dots,g_s\in T$ such that $\mathfrak{K}=\mathrm{Frac}(T)(\omega_1,\dots,\omega_s)$ where the $\omega_i$ are $m$-th roots of the $g_i$. Let $\mathfrak{R}$ be the integral closure of $T$ in $\mathfrak{K}$. It suffices to show that $\mathfrak{R}$ admits a small Cohen-Macaulay algebra. Let $f_1,\dots,f_{r'}$ be the canonical divisors of $\mathfrak{K}/\mathrm{Frac}(T)$ in $T$ (possibly an empty list). There exist units $\alpha_1,\dots,\alpha_s\in T$ such that the integral closure of $T[\sqrt[m]{f_1},\dots,\sqrt[m]{f_{r'}},\sqrt[m]{\alpha_1},\dots,\sqrt[m]{\alpha_s}]$, say $\mathfrak{T}$, is a module finite extension of $\mathfrak{R}$. We will show that $\mathfrak{T}$ is Cohen-Macaulay, which would complete the proof. Let $\mathfrak{R}^*$ be the integral closure of $T[\sqrt[p]{f_1},\dots,\sqrt[p]{f_{r'}},\sqrt[p]{\alpha_1},\dots,\sqrt[p]{\alpha_s}]$. Note that each $\sqrt[p]{f_i}$ and each $\sqrt[p]{\alpha_i}$ is square free in $T[\sqrt[p]{f_1},\dots,\sqrt[p]{f_{r'}},\sqrt[p]{\alpha_1},\dots,\sqrt[p]{\alpha_s}]$ and hence in $\mathfrak{R}^*$. Using \Cref{HK2}, we then see that $\mathfrak{T}$ is free over $\mathfrak{R}^*$. Thus, it suffices to show that $\mathfrak{R}^*$ is Cohen-Macaulay. 
\par
Since $T_{(\psi-1)}\to \mathfrak{R}_{(\psi-1)}$ is tamely $p$-ramified, so is $T_{(\psi-1)}\to \overline{T_{(\psi-1)}[\sqrt[p]{g_i}]}$ for each $1\leq i\leq s$, where $\overline{T_{(\psi-1)}[\sqrt[p]{g_i}]}$ is the integral closure of $T_{(\psi-1)}[\sqrt[p]{g_i}]$. By \Cref{thm:p_ramification_characterization}, it follows that each $T_{(\psi-1)}\to \overline{T_{(\psi-1)}[\sqrt[p]{g_i}]}$ is \'{e}tale over $p$. Moreover, since $T_{(\psi-1)}\to \mathfrak{R}_{(\psi-1)}$ is tamely $p$-ramified and $p\mid n$, it follows that $\psi-1$ is not amongst $f_1,\dots,f_{r'}$. Now $T_{(\psi-1)}\to \overline{T_{(\psi-1)}[\sqrt[p]{g_i},\sqrt[p]{f_1},\dots,\sqrt[p]{f_{r'}}]}$ factors through $T_{(\psi-1)}\to \overline{T_{(\psi-1)}[\sqrt[p]{\alpha_i}]}$. Since $\overline{T_{(\psi-1)}[\sqrt[p]{g_i},\sqrt[p]{f_1},\dots,\sqrt[p]{f_{r'}}]}$ is integral over and birational to the join of the integral closures of $T_{(\psi-1)}[\sqrt[p]{g_i}]$, $T_{(\psi-1)}[\sqrt[p]{f_1}],\dots, T_{(\psi-1)}[\sqrt[p]{f_{r'}}]$, the former is the integral closure of the latter. For some subset $e_1,\dots,e_k$ of the list $g_i,f_1,\dots,f_{r'}$, the join of the integral closures of $T_{(\psi-1)}[\sqrt[p]{g_i}]$, $T_{(\psi-1)}[\sqrt[p]{f_1}],\dots, T_{(\psi-1)}[\sqrt[p]{f_{r'}}]$ is isomorphic to $\overline{T_{(\psi-1))}[\sqrt[p]{e_i}]}\otimes_{T_{(\psi-1)}}\dots \otimes_{T_{(\psi-1)}} \overline{T_{(\psi-1)}[\sqrt[p]{e_k}]}$ by \Cref{kernel_lemma}. Since fibre product of \'{e}tale morphisms are \'{e}tale, it follows from \Cref{thm:p_ramification_characterization} that $T_{(\psi-1)}\to \overline{T_{(\psi-1))}[\sqrt[p]{e_i}]}\otimes_{T_{(\psi-1)}}\dots \otimes_{T_{(\psi-1)}} \overline{T_{(\psi-1)}[\sqrt[p]{e_k}]}$ is \'{e}tale. Moreover, since  $\overline{T[\sqrt[p]{e_i}]}\otimes_{T}\dots \otimes_{T} \overline{T[\sqrt[p]{e_k}]}[1/p]$ is integrally closed by \Cref{HK2}, it follows that $\overline{T[\sqrt[p]{e_i}]}\otimes_{T}\dots \otimes_{T} \overline{T[\sqrt[p]{e_k}]}$ is regular in codimension one and hence integrally closed. Hence each $\alpha_i$ is $p$-unramified.
\par
Note that $S[\psi]$ is local and $S[\psi]/(\psi-1)\simeq S/pS$, so that $\psi-1$ is a minimal generator of the maximal ideal. Since $S[\psi]\to S[\epsilon]$ is an \'{e}tale map, \Cref{prop:RLR_epsilon}(1) implies that the order of $\psi-1$ with respect to each maximal ideal of $S[\epsilon]$ is $1$. Condition (2) of \Cref{def:quasi_p_unramified} implies that the order of $\psi-1$ with respect to each maximal ideal of $T$ is also $1$. In particular, the localization at each maximal ideal of $T/(\psi-1)T$ is integrally closed and hence $T/(\psi-1)T$ is integrally closed. We relabel the list $f_1,\dots,f_{r'},\alpha_1,\dots,\alpha_s$ to $f_1,\dots,f_r$. Since the image of each $f_i$ in the quotient field of $T/(\psi-1)T$ is a $p$-th power, it follows that each $f_i$ is a $p$-th power in $T/(\psi-1)T$. Write $f_i=h_i^p+(\psi-1)a_i$ for some $h_i,a_i\in T$. By \Cref{thm:p_ramification_characterization}, $\Gamma_{(\psi-1)T_{(\psi-1)}}(f_i,p)\geq p$. We have in $T_{(\psi-1)}:$

\[h_i^p+(\psi-1)a_i=(h_{1,i}/h_{2,i})^p+(b_i/c_i)(\psi-1)^p\]
for some $h_{1,i}, h_{2,i},b_i,c_i\in T$, $h_{2,i},c_i\notin (\psi-1)T$. This implies that $(h_ih_{2,i})^p-h_{1,i}^p\in (\psi-1)T$ and hence $hh_{2,i}-h_{1,i}\in (\psi-1)T$. Thus, the above equation then implies that $a_i(\psi-1)\in (\psi-1)^pT$ and thus $a_i\in (\psi-1)^{p-1}T$. In particular, each $f_i=h_i^p+(\psi-1)^pd_i$ for some $d_i\in T$. Set $\mu_i:=\sqrt[p]{f_i}$. Using \Cref{CM_dual_prop}(i) and \Cref{lem:forward_implication}(ii), we see that for $J_i:=(\mu_i-h_i,\psi-1)^{p-1}T[\mu_i]$, we have $\Hom_{T[\mu_i]}(J_i,T[\mu_i])_{(\psi-1)}\simeq \overline{T[\mu_i]}_{(\psi-1)}$. Note that by \Cref{HK2}, $T[\mu_i][1/p]$ is integrally closed. Hence by \cite[Proposition 2.1(i)]{KA}, $\Hom_{T[\mu_i]}(J_i,T[\mu_i])\simeq \overline{T[\mu_i]}$. By \Cref{CM_dual_prop}(iii), $\overline{T[\mu_i]}$ is a maximal Cohen-Macaulay $T[\mu_i]$-module and hence is a Cohen-Macaulay ring. After discarding some of the $\mu_i$ if necessary, we can assume that the $\mathrm{Frac}(T)(\mu_i)$ satisfy the linear disjointness hypothesis over $\mathrm{Frac}(T)$ in the statement of \Cref{kernel_lemma}. Thus by \Cref{kernel_lemma}, the join of the $\overline{T[\mu_i]}$ is isomorphic as a $T$-algebra to $\overline{T[\mu_1]}\otimes_T\dots \otimes_T\overline{T[\mu_r]}$. Since the former is birational to and integral over $T[\mu_1,\dots,\mu_r]$, the proof would be complete if we show that the latter is integrally closed. Since it is free over $T$ it satisfies $(S_2)$ as a $T$-module and hence as a ring. Moreover, $\overline{T[\mu_1]}\otimes_T\dots \otimes_T\overline{T[\mu_r]}[1/p]$ is integrally closed by \Cref{HK2}. Since each $T_{(\psi-1)}\to \overline{T_{(\psi-1)}[\mu_i]}$ is \'{e}tale and fibre product of \'{e}tale morphisms are \'{e}tale, $T_{(\psi-1)}\to \overline{T_{(\psi-1)}[\mu_1]}\otimes_{T_{(\psi-1)}}\dots \otimes_{T_{(\psi-1)}}\overline{T_{(\psi-1)}[\mu_r]}$ is \'{e}tale. In particular, all height one primes containing $p$ in $\overline{T[\mu_1]}\otimes_T\dots \otimes_T\overline{T[\mu_r]}$ are regular and hence it is regular in codimension one. Thus $\overline{T[\mu_1]}\otimes_T\dots \otimes_T\overline{T[\mu_r]}$ is integrally closed, so $\mathfrak{R}^*$ is Cohen-Macaulay and the proof is complete.
\end{proof}

\begin{ex}[\textbf{Koh's example}]\label{ex:Koh}

In \cite[Example 2.4]{KO}, Koh gives an example showing that the main theorem of \cite{RO} fails in the modular case. We will observe that this example is $p$-unramified, i.e., \'{e}tale in codimension one over $p$ and show that our results yield a small Cohen-Macaulay algebra over it. 

\par
Let $S,L$ be as in \Cref{thm:roberts_extension}. Assume $p=3$ and $\dim(S)\geq 3$. Let $\epsilon$ be a primitive cube root of unity. Since $\epsilon-1$ is a minimal generator of the maximal ideal of $S[\epsilon]$, it follows that $i\sqrt{3}$ is also a minimal generator of the maximal ideal. Let $i\sqrt{3},x,y$ be part of a minimal generating set of the maximal ideal of $S[\epsilon]$. Let $a:=xy^4+27$, $b:=x^4y+27$, $f:=ab^2$ and $\theta=\sqrt[3]{f}$ in some fixed algebraic closure of $L$. Let $K=L(\epsilon,\theta)$ and $R$ the integral closure of $S$ in $K$. \cite[Example 2.4]{KO} shows that $R$ is not Cohen-Macaulay. We see that $\epsilon-1$ is the unique prime divisor of $p$ in $S[\epsilon]$, $\mathrm{ord}_{\epsilon-1}(p)=2$ and that $\Gamma_{(\epsilon-1)S[\epsilon]_{(\epsilon-1)}}(f)\geq 6\geq p/(p-1)\mathrm{ord}_{\epsilon-1}(p)$. By \Cref{thm:p_ramification_characterization}, $f\in S[\epsilon]$ is $p$-unramified, i.e., $S[\epsilon]\to R$ is \'{e}tale in codimension one over $p$. The canonical divisors for $K/L(\epsilon)$ in $S[\epsilon]$ are $\{a,b\}$. Note that $\Gamma_{(\epsilon-1)S[\epsilon]_{(\epsilon-1)}}(a)=0=\Gamma_{(\epsilon-1)S[\epsilon]_{(\epsilon-1)}}(b)$, so that $K/L$ is not $p$-unramified in $S$. To see this, suppose $\Gamma_{(\epsilon-1)S[\epsilon]_{(\epsilon-1)}}(a)\geq 1$. Since $S[\epsilon]/(\epsilon-1)$ is integrally closed, it follows that $\Gamma_{(\epsilon-1)}(a)\geq 1$. Since $S[\epsilon]/(\epsilon-1)$ is regular local with the images of $x,y$ part of a minimal generating set for the maximal ideal, this is impossible. However, $K/L(\epsilon)$ is a quasi-unramified abelian extension over $S[\epsilon]$. Consider the injective map of regular local rings $f:S[\epsilon]\to T:=S[\epsilon][\sqrt[3]{x},\sqrt[3]{y}]$. Then $(\epsilon-1,\sqrt[3]{x},\sqrt[3]{y})$ is part of a minimal generating set for the maximal ideal of $T$. Then $f$ and $T$ satisfy conditions (1) and (2) of \Cref{def:quasi_p_unramified}. Since $\mathrm{Gal}(K(\sqrt[3]{x},\sqrt[3]{y})/L(\epsilon,\sqrt[3]{x},\sqrt[3]{y}))=\mathbb{Z}/3\mathbb{Z}$ and $a,b\in T$ are prime, the canonical divisors for $K(\sqrt[3]{x},\sqrt[3]{y})/L(\epsilon,\sqrt[3]{x},\sqrt[3]{y})$ in $T$ are $\{a,b\}$. Since $\Gamma_{(\epsilon-1)T_{(\epsilon-1)}}(a)\geq 6\geq (p/p-1)\mathrm{ord}_{\epsilon-1}(p)$, by \Cref{thm:p_ramification_characterization} $a\in T$ is $p$-unramified. Similarly for $b\in T$. Thus from the proof of \Cref{thm:roberts_extension}, we see that the integral closure of $T[\sqrt[3]{a},\sqrt[3]{b}]$ is Cohen-Macaulay and hence a small Cohen-Macaulay algebra for $R$. Indeed, by \Cref{lem:forward_implication}(2) and \Cref{CM_dual_prop}(1), $\overline{T[\sqrt[3]{a}]}_{(\epsilon-1)}\simeq \Hom_{T[\sqrt[3]{a}]}(J, T[\sqrt[3]{a}])_{(\epsilon-1)}$ for $J=(\sqrt[3]{a}-\sqrt[3]{x}(\sqrt[3]{y})^4, \epsilon-1)^{2}T[\sqrt[3]{a}]$. Since $T[\sqrt[3]{a}][1/p]$ is integrally closed by \Cref{HK2}, we see by \cite[Prposition 2.1(i)]{KA} that $\overline{T[\sqrt[3]{a}]}\simeq \Hom_{T[\sqrt[3]{a}]}(J, T[\sqrt[3]{a}])$. Moreover, by \Cref{CM_dual_prop}(3), $\overline{T[\sqrt[3]{a}]}$ is Cohen-Macaulay. A similar argument shows that $\overline{T[\sqrt[3]{b}]}\simeq \Hom_{T[\sqrt[3]{b}]}(I, T[\sqrt[3]{b}])$ for $I:=(\sqrt[3]{b}-\sqrt[3]{y}(\sqrt[3]{x})^4, \epsilon-1)^2T[\sqrt[3]{b}]$ and that $\overline{T[\sqrt[3]{b}]}$ is Cohen-Macaulay. Let $V$ be the join of $\overline{T[\sqrt[3]{a}]}$ and $\overline{T[\sqrt[3]{b}]}$. By \Cref{kernel_lemma}, $V\simeq \overline{T[\sqrt[3]{a}]}\otimes_T \overline{T[\sqrt[3]{b}]}$ as $T$-algebras. Since $V$ is $T$-free, it satisfies $S_2$ as a ring. Moreover, by \Cref{HK2}, $V[1/p]$ is integrally closed. By \Cref{thm:p_ramification_characterization}, $T_{(\epsilon-1)}\to \overline{T[\sqrt[3]{a}]}_{(\epsilon-1)}$ and $T_{(\epsilon-1)}\to \overline{T[\sqrt[3]{b}]}_{(\epsilon-1)}$ are \'{e}tale and since fibre product of \'{e}tale morphisms are \'{e}tale, $T_{(\epsilon-1)}\to \overline{T[\sqrt[3]{a}]}_{(\epsilon-1)}\otimes_{T_{(\epsilon-1)}} \overline{T[\sqrt[3]{b}]}_{(\epsilon-1)}$ is \'{e}tale. In particular, all height one primes containing $p$ in $V$ are regular and hence $V$ is integrally closed. Thus $V$ is the integral closure of $T[\sqrt[3]{a},\sqrt[3]{b}]$ and from what we have shown $V$ is $T$-free and hence Cohen-Macaulay. More explicitly, $V$ is $T$-free with a basis given by

\begin{align*}
    \bigg\{ & 1, \dfrac{\sqrt[3]{a}-\sqrt[3]{x}(\sqrt[3]{y})^4}{\epsilon-1}, \dfrac{(\sqrt[3]{a}-\sqrt[3]{x}(\sqrt[3]{y})^4)^2}{(\epsilon-1)^2},
    \\
    &\dfrac{\sqrt[3]{b}-\sqrt[3]{y}(\sqrt[3]{x})^4}{\epsilon-1},\dfrac{\sqrt[3]{a}-\sqrt[3]{x}(\sqrt[3]{y})^4}{\epsilon-1}\cdot \dfrac{\sqrt[3]{b}-\sqrt[3]{y}(\sqrt[3]{x})^4}{\epsilon-1},\dfrac{(\sqrt[3]{a}-\sqrt[3]{x}(\sqrt[3]{y})^4)^2}{(\epsilon-1)^2}\cdot \dfrac{\sqrt[3]{b}-\sqrt[3]{y}(\sqrt[3]{x})^4}{\epsilon-1}, 
\\
&\dfrac{(\sqrt[3]{b}-\sqrt[3]{y}(\sqrt[3]{x})^4)^2}{(\epsilon-1)^2}, \dfrac{\sqrt[3]{a}-\sqrt[3]{x}(\sqrt[3]{y})^4}{\epsilon-1}\cdot \dfrac{(\sqrt[3]{b}-\sqrt[3]{y}(\sqrt[3]{x})^4)^2}{(\epsilon-1)^2}, \dfrac{(\sqrt[3]{a}-\sqrt[3]{x}(\sqrt[3]{y})^4)^2}{(\epsilon-1)^2} \cdot \dfrac{(\sqrt[3]{b}-\sqrt[3]{y}(\sqrt[3]{x})^4)^2}{(\epsilon-1)^2} \bigg\}.
\end{align*}

\end{ex}

\section{Comments on the $p$-ramified case}

In this section we make a couple of observations concerning the $p$-ramified case. Let $S$ be a regular local ring of mixed characteristic $p>0$ with quotient field $L$. Let $K/L$ be a finite abelian extension with $p$-torsion annihilated by $p$ and $R$ the integral closure of $S$ in $K$. Let the exponent of $\mathrm{Gal}(K/L)$ be $n$ and assume $S$ possesses a primitive $n$-th root of unity. Philosophically, if $S\to R$ is tamely $p$-ramified, then the obstruction to an analog of Roberts's theorem is the existence of $p$-ramified canonical divisors for $K/L$ in $S$. This is made concrete in \Cref{cor:p-ramified}. When $S$ is complete with perfect residue field, we also exhibit a calculation in the first $p$-ramified case and show that it admits a small Cohen-Macaulay algebra of rank at most $(p-1)p^{p(d-1)+1}$ where $d=\dim(S)$.

\begin{cor}\label{cor:p-ramified}
    Let $S$ and $L$ be as in \Cref{thm:roberts_extension}. Let $K/L$ be a finite Abelian extension with $p$-torsion annihilated by $p$ and $R$ the integral closure of $S$ in $K$. Assume $S\to R$ is tamely $p$-ramified. Let $g_1,\dots,g_s$ be the canonical divisors of $K(\epsilon)/L(\epsilon)$ in $S[\epsilon]$ where $\epsilon$ is a primitive $n$-th root of unity for $n$ the exponent of $\mathrm{Gal}(K/L)$. Let $\alpha_1,\dots,\alpha_r$ be units in $S[\epsilon]$ such that each canonical element is  of the form $\alpha_i\cdot b$ for $b$ a monomial in $g_1,\dots,g_s$. Let $g_i^U$ (resp. $g_i^R$) and $\alpha_i^U$ (resp. $\alpha_i^R$) denote the $p$-unramified ($p$-ramified) elements amongst the $g_i$ and $\alpha_i$. Then the integral closure of $S[\epsilon][\sqrt[n]{g_1},\dots, \sqrt[n]{g_s},\sqrt[n]{\alpha_1},\dots, \sqrt[n]{\alpha_r}]$ admits a small Cohen Macaulay algebra (module) if and only if the integral closure of $S[\epsilon][\{\sqrt[p]{g_i^R}\}_i, \{\sqrt[p]{\alpha_i^R}\}_i\}]$ admits one.
\end{cor}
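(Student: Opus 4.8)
\emph{Approach.} Set $T_0:=S[\epsilon]$, a regular semi-local ring by \Cref{prop:RLR_epsilon}, and let $n$ be the exponent of $\mathrm{Gal}(K/L)$. Since the $p$-torsion of $\mathrm{Gal}(K/L)$ is elementary abelian, either $p\nmid n$ --- in which case $n$ is a unit in $T_0$, the ring on the left is $T_0$-free by \Cref{HK2} and so Cohen-Macaulay, and the equivalence is obtained by a direct analysis of the ring on the right over the finitely many primes of $T_0$ above $p$ in the spirit of \Cref{lem:forward_implication} --- or $n=p\,n_0$ with $\gcd(p,n_0)=1$, which is the case I treat. Write $A_1:=\overline{T_0[\{\sqrt[n]{g_i}\},\{\sqrt[n]{\alpha_j}\}]}$, $A_2:=\overline{T_0[\{\sqrt[p]{g_i}\},\{\sqrt[p]{\alpha_j}\}]}$ and $A_3:=\overline{T_0[\{\sqrt[p]{g_i^R}\},\{\sqrt[p]{\alpha_j^R}\}]}$, so that $A_3\subseteq A_2\subseteq A_1$ are module-finite extensions of normal domains, $A_1$ being the ring on the left of the statement and $A_3$ the ring on the right. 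The plan is to prove separately that ``$A_1$ admits a small Cohen-Macaulay algebra (module) if and only if $A_2$ does'' and that ``$A_2$ does if and only if $A_3$ does''. In each equivalence, the implication from the larger ring to the smaller is immediate: restriction of scalars along a module-finite extension of normal domains of equal dimension sends small Cohen-Macaulay algebras to small Cohen-Macaulay algebras, and maximal Cohen-Macaulay modules to maximal Cohen-Macaulay modules. For each reverse implication I will show the larger ring is a \emph{free} module over the smaller one, and then invoke \Cref{lem1} (for the ``module'' form) or the fact that a ring module-finite and free over a Cohen-Macaulay ring is Cohen-Macaulay (for the ``algebra'' form, applied to the base change of a given small Cohen-Macaulay algebra of the smaller ring).

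\emph{The step $A_1\leftrightarrow A_2$.} Put $C:=T_0[\{\sqrt[n_0]{g_i}\},\{\sqrt[n_0]{\alpha_j}\}]$. As $n_0$ is a unit in $T_0$, the $g_i$ are pairwise non-associate primes, and the $\alpha_j$ are units, \Cref{HK2} shows $C$ is integrally closed; being a tower of adjunctions of roots of monic polynomials $X^{n_0}-g_i$, $X^{n_0}-\alpha_j$, it is moreover $T_0$-free. Since $\gcd(p,n_0)=1$, the families $\{\sqrt[n]{g_i},\sqrt[n]{\alpha_j}\}$ and $\{\sqrt[p]{g_i},\sqrt[p]{\alpha_j},\sqrt[n_0]{g_i},\sqrt[n_0]{\alpha_j}\}$ generate the same extension of $L(\epsilon)$; after the routine Kummer-theoretic reduction to linearly disjoint generating data (which alters none of $A_1$, $A_2$, $A_3$), \Cref{kernel_lemma} identifies the join of $A_2$ and $C$ with $A_2\otimes_{T_0}C$, a free $A_2$-module, hence a ring satisfying $(S_2)$. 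That it equals $A_1$ follows once we check $(R_1)$. After inverting $p$, \Cref{HK2} over $T_0[1/p]$ (where now $n$ is a unit) gives $(A_2\otimes_{T_0}C)[1/p]=A_1[1/p]$, a normal ring. At the unique height-one prime of $T_0$ above $p$, namely $(\psi-1)$ with $\psi:=\epsilon^{n/p}$ --- this is where the unramifiedness of $S$ enters, exactly as in the proof of \Cref{thm:roberts_extension} --- no $g_i$ lies in $(\psi-1)$ (the canonical divisors avoid $p$, by \Cref{rem:canonical_divisors_modular_setup}) and the $\alpha_j$ are units, so $C_{(\psi-1)}$ is \'{e}tale over the discrete valuation ring $(T_0)_{(\psi-1)}$; hence $(A_2\otimes_{T_0}C)_{(\psi-1)}=(A_2)_{(\psi-1)}\otimes_{(T_0)_{(\psi-1)}}C_{(\psi-1)}$ is \'{e}tale over $(A_2)_{(\psi-1)}$, which is regular, being a localization of the normal ring $A_2$ at primes of height at most one. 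Thus $A_1=A_2\otimes_{T_0}C$ is $A_2$-free, which yields the remaining implication.

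\emph{The step $A_2\leftrightarrow A_3$.} Put $D:=\overline{T_0[\{\sqrt[p]{g_i^U}\},\{\sqrt[p]{\alpha_j^U}\}]}$, the contribution of the $p$-unramified canonical divisors and units. The crucial point is that $D$ is Cohen-Macaulay, hence $T_0$-free, being a maximal Cohen-Macaulay module over the regular ring $T_0$. This is precisely what the argument in the proof of \Cref{thm:roberts_extension} produces with $S[\epsilon]$ in the role of the ring denoted $T$ there: since $S$ is unramified, $\psi-1$ is a minimal generator of each maximal ideal of $T_0$, $\mathrm{ord}_{(\psi-1)T_0}(p)=p-1$, and $T_0/(\psi-1)T_0$ is normal --- which is all that argument uses about that ring --- while each $g_i^U$, $\alpha_j^U$, being $p$-unramified over $T_0$, is of the form $h^p+(\psi-1)^p d$ by \Cref{thm:p_ramification_characterization}, so that \Cref{CM_dual_prop} together with \Cref{lem:forward_implication}(2) identifies each $\overline{T_0[\sqrt[p]{a}]}$ with a maximal Cohen-Macaulay $T_0$-module $\Hom_{T_0[\mu]}(J,T_0[\mu])$, and \Cref{kernel_lemma} identifies $D$ with the $T_0$-tensor product of these. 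With $D$ known to be $T_0$-free, the $(R_1)$-argument of the previous paragraph applies verbatim --- using \Cref{rem:canonical_divisors_modular_setup} so that $\psi-1$ is not among the $g_i^U$, and using that each $T_0\to\overline{T_0[\sqrt[p]{g_i^U}]}$, $T_0\to\overline{T_0[\sqrt[p]{\alpha_j^U}]}$ is \'{e}tale over $p$ by definition of $p$-unramified, together with the stability of \'{e}taleness under fibre products --- to conclude $A_2=A_3\otimes_{T_0}D$, a free $A_3$-module, whence the remaining implication.

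\emph{Expected obstacle.} I expect the delicate points to be the two $(R_1)$-verifications for $A_2\otimes_{T_0}C$ and $A_3\otimes_{T_0}D$, which come down to the behaviour in codimension one over $p$ and hence to the unramifiedness of $S$ through the parameter $\psi-1$, and the extraction, from the proof of \Cref{thm:roberts_extension} read over $S[\epsilon]$, of the freeness of the $p$-unramified block $D$ over $T_0$; both are careful bookkeeping with tools already assembled (Serre's criterion, \Cref{HK2}, and the explicit $1$-link computation underlying \Cref{CM_dual_prop}). The Kummer-theoretic reduction to linearly disjoint generators, the fact that the tame-$p$-ramification hypothesis passes from $S\to R$ to $S[\epsilon]\to\overline{S[\epsilon]}^{\,K(\epsilon)}$ as in \Cref{ex:Roberts}, and the case $p\nmid n$, are routine.
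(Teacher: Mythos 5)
Your proposal is correct and follows essentially the same route as the paper. Both break the statement into two freeness claims — that the $n$-th root ring (your $A_1$) is free over the $p$-th root ring ($A_2$), and that $A_2$ is free over the ring generated only by $p$-th roots of the $p$-ramified data ($A_3$) — and conclude with \Cref{lem1} for the module version. The second freeness step is argued identically in both: factor $A_2$ as (a join isomorphic to) $A_3\otimes_{T_0}\mathfrak{R}^u$ with $\mathfrak{R}^u$ the $p$-unramified block, extract $T_0$-freeness of $\mathfrak{R}^u$ from the proof of \Cref{thm:roberts_extension}, invoke \Cref{kernel_lemma} after the linear-disjointness reduction, and verify normality by $(S_2)$ plus regularity in codimension one over $p$ (base change of \'etale is \'etale). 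The only genuine divergence is in the first step. The paper observes directly that the $\sqrt[p]{g_i}$ and $\sqrt[p]{\alpha_i}$ are square-free in $\mathfrak{R}=A_2$ (using \Cref{rem:canonical_divisors_modular_setup} to ensure $\psi-1$ is not among the $g_i$), then applies \Cref{HK2} over $A_2$ to adjoin the remaining $n_0$-th roots, obtaining $A_1$ free over $A_2$ in one stroke; you instead set up $A_2\otimes_{T_0}C$ and rerun the $(S_2)+(R_1)$ plus \Cref{kernel_lemma} package a second time. Both are valid, but the paper's tower argument is shorter because it avoids the extra normality check. Your observation that linear disjointness of the $p$-part and the prime-to-$p$ part is automatic (coprime Galois orders) is a nice shortcut the paper does not spell out, and your explicit base-change argument for the algebra form of the conclusion fills in a detail the paper leaves to the reader.
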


\begin{proof}
    The forward implication is obvious. Let $\mathfrak{R}^r$ denote the integral closure of $S[\epsilon][\{\sqrt[p]{g_i^R}\}_i, \{\sqrt[p]{\alpha_i^R}\}_i\}]$. Assume $\mathfrak{R}^r$ admits a small Cohen-Macaulay module. We will show that the integral closure of $S[\epsilon][\sqrt[n]{g_1},\dots, \sqrt[n]{g_s},\sqrt[n]{\alpha_1},\dots, \sqrt[n]{\alpha_r}]$ is free over $\mathfrak{R}^r$. By \Cref{lem1} the proof would then be complete. Since $S[\epsilon]_{(\epsilon^{n/p}-1)}\to \mathfrak{R}_{(\epsilon^{n/p}-1)}$ is tamely $p$-ramified and $p\mid n$, it follows that $\psi-1$ is not amongst $g_1,\dots,g_{s}$. Thus each $\sqrt[p]{g_i}$ and each $\sqrt[p]{\alpha_i}$ is square free in $T[\sqrt[p]{g_1},\dots,\sqrt[p]{g_{s}},\sqrt[p]{\alpha_1},\dots,\sqrt[p]{\alpha_r}]$ and hence square free in its integral closure $\mathfrak{R}$. Using \Cref{HK2}, we then see that the integral closure of $S[\epsilon][\sqrt[n]{g_1},\dots, \sqrt[n]{g_s},\sqrt[n]{\alpha_1},\dots, \sqrt[n]{\alpha_r}]$ is free over $\mathfrak{R}$. Hence it suffices to show that $\mathfrak{R}$ is free over $\mathfrak{R}^r$. Let $\mathfrak{R}^u$ denote the integral closure of $S[\epsilon][\{\sqrt[p]{g_i^U}\}_i, \{\sqrt[p]{\alpha_i^U}\}_i\}]$. We will show that the join $V$ of $\mathfrak{R}^U$ and $\mathfrak{R}^r$ is integrally closed, i.e., $V=\mathfrak{R}$. Discarding some elements if necessary, we may assume that the fraction fields of $\mathfrak{R}^r$ and $\mathfrak{R}^u$ are linearly disjoint over $L(\epsilon)$. From the proof of \Cref{thm:roberts_extension}, $\mathfrak{R}^u$ is $S$-free and hence a projective $S[\epsilon]$ module. Since $S[\epsilon]$ is semi-local and the rank of the localization of $\mathfrak{R}^u$ at each of its maximal ideals is constant, $\mathfrak{R}^u$ is $S[\epsilon]$-free. Hence $\mathfrak{R}^r\otimes_{S[\epsilon]}\mathfrak{R}^u$ is torsion-free. By \Cref{kernel_lemma}, $V\simeq \mathfrak{R}^r\otimes_{S[\epsilon]}\mathfrak{R}^u$. Clearly, $\mathfrak{R}^r\otimes_{S[\epsilon]}\mathfrak{R}^u$ satisfies $(S_2)$. Moreover, by \Cref{HK2}, $(\mathfrak{R}^r\otimes_{S[\epsilon]}\mathfrak{R}^u)[1/p]$ is integrally closed. Therefore $\mathfrak{R}^r\otimes_{S[\epsilon]}\mathfrak{R}^u$ is integrally closed if and only if all height one primes containing $p$ are regular. Since $S[\epsilon]_{(\epsilon-1)}\to \mathfrak{R}^u_{(\epsilon-1)}$ is \'{e}tale and a base change of an \'{e}tale morphism is \'{e}tale, we see that $\mathfrak{R}^r_{(\epsilon-1)}\to \mathfrak{R}^r_{(\epsilon-1)}\otimes_{S[\epsilon]_{(\epsilon-1)}}\mathfrak{R}^u_{(\epsilon-1)}$ is \'{e}tale. In particular, all height one primes containing $p$ in $\mathfrak{R}^r\otimes_{S[\epsilon]}\mathfrak{R}^u$ are regular and hence it is integrally closed. Since $\mathfrak{R}^u$ is $S[\epsilon]$-free it follows that $\mathfrak{R}$ is free over $\mathfrak{R}^r$ and the proof is complete.
\end{proof}

\begin{rem}
    Assume notation as in \Cref{cor:p-ramified}. If all the $g_i$ are $p$-ramified, then $S[\epsilon][\sqrt[n]{g_1},\dots,\sqrt[n]{g_s}]$ need not be Cohen-Macaulay, see \cite[Example 4.8]{PS2}.
\end{rem}

Let $S$ be a complete unramified regular local ring of mixed characteristic $p>0$ with perfect residue field $k$ and fraction field $L$. Let $\epsilon$ be a primitive $p$-th root of unity in a fixed algebraic closure of $L$. We now exhibit a construction of a small Cohen-Macaulay algebra in the case of a degree $p$ extension of $L(\epsilon)$ with the property that some canonical element in $T:=S[\epsilon]$ is square free and $p$-ramified (and hence not tamely $p$-ramified by \Cref{thm:p_ramification_characterization}). We hope that this indicates a path to understanding the $p$-ramified case in general.
\par 
Suppose $K/L(\epsilon)$ is a degree $p$ extension such that some canonical element $f\in T$ for $K/L(\epsilon)$ is square-free and $p$-ramified. Then $K=L(\epsilon,\omega)$ for $\omega=\sqrt[p]{f}$. Let $R$ be the integral closure of $T$ in $K$, i.e., the integral closure of $T[\omega]$. The result we seek is clear if $\dim(S)\leq 2$. Therefore assume $d:=\dim(S)\geq 3$. Complete $\epsilon-1$ to a minimal generating set $(\epsilon-1,x_2,\dots,x_d)$ for the maximal ideal of $T$. If $f$ is not divisible by $\epsilon-1$ or any of the $x_i$'s, set $g:=f$; if $f$ is divisible by either $\epsilon-1$ or any of the $x_i$'s set $g$ to be the quotient of $f$ by such factors. Choose a strict sequence of regular local rings as constructed in the proof of \Cref{S_p_lemma}(2) (adjoin iterated $p$-th roots of the $x_i$), $T:=\mathfrak{T}_0\subseteq \mathfrak{T}_1\subseteq \dots \subseteq \mathfrak{T}_p$ so that $g\equiv(h^p+\sum_{i=t}^{p-1}h_i^p(\epsilon-1)^i$) $mod(\epsilon-1)^p\mathfrak{T}_p$ for some $h,h_t,\dots,h_{p-1}\in \mathfrak{T}_p$ and $1\leq t\leq p-1$. Let $\alpha$ be a $p$-th root of $\epsilon-1$ in the algebraic closure of $L(\epsilon)$. Then $\mathfrak{D}:=\mathfrak{T}_p[\alpha]$ is a regular local ring with $\alpha$ a minimal generator of its maximal ideal. Then $g\in \D$ is square free by \Cref{HK1} and $g\equiv(h^p+\sum_{i=t}^{p-1}h_i^p(\alpha)^{pi}$) $mod\: (\alpha^{p^2}\mathfrak{D})$. Let $\mu$ denote a $p$-th root of $g$; it suffices to show that the integral closure of $\D[\mu]$ admits a small Cohen-Macaulay algebra. Note that $\mathrm{ord}_{\alpha}(p)=p(p-1)$. If $h_t=h_{t+1}=\dots=h_{p-1}=0$ then proceeding as in the proof of \Cref{lem:forward_implication}(1) and applying \Cref{CM_dual_prop}(3), one sees that the integral closure of $\D[\mu]$ is Cohen-Macaulay and $R$ admits a small Cohen-Macaulay algebra. Now without loss of generality assume that $h_t\neq 0$ and $h_t\notin \alpha\D$. Since $p\in \D$ is an associate of $\alpha^{p(p-1)}$, it follows that $\Gamma_{\alpha\D}(g)\geq p(p-1)+t$. In particular, we may write $g=r^p+\alpha^{p(p-1)+t}\cdot y$ for some $r,y\in \D$, $y\notin \alpha \D$. Choose an integer $1\leq l\leq p-1$ such that $lt\equiv 1 \mathrm{mod} (p\mathbb{Z})$. Consider the extension $\D\subseteq \D_l:=\D[\sqrt[l]{\alpha}]$. Then $\D_l$ is a regular local ring with $\sqrt[l]{\alpha}$ a minimal generator of the maximal ideal. Moreover, by \Cref{HK1}, $g\in \D_l$ is square free. Denote the unique height one prime ideal containing $p$ in $\D_l[\mu]$ by $P_l:=(\sqrt[l]{\alpha},\mu-r)$. We have in $A:=\D_l[\mu]$ 

\begin{align}\label{eq1_Dw}
 (\sqrt[l]{\alpha})^{lp(p-1)+tl}\cdot y &=\mu^p -r^p \nonumber
 \\
 &= (\mu-r)^p+p\cdot c'\cdot (\mu-r)
 \end{align}
where $c'\in A$ is the image in $A$ of the element $C'\in \D_l[W]$ in \Cref{P6}. Recall that $p=-(c'_{\epsilon})^{-1}(\epsilon-1)^{p-1}$ where $c'_{\epsilon}$ is the image in $\mathbb{Z}[\epsilon]$ of the corresponding element $C'_{\epsilon}\in  \mathbb{Z}[W]$ from \Cref{P6}. We have

\begin{equation}
    (\mu-r)^p-((c'_{\epsilon})^{-1}\cdot c')(\mu-r)(\sqrt[l]{\alpha})^{lp(p-1)}-(\sqrt[l]{\alpha})^{lp(p-1)+tl}y=0.
\end{equation}

Write $lp(p-1)+tl=pq+1$ for an integer $q$. Note that $lp>q$. Dividing the above equation by $(\sqrt[l]{\alpha})^{pq}$ and setting $\zeta:=(\sqrt[l]{\alpha})^{-q}(\mu-r)$, we see that $\zeta$ is a root of the polynomial in $A[X]$

\begin{equation}\label{zeta_integral_equation}
    X^p-((c'_{\epsilon})^{-1}\cdot c')(\sqrt[l]{\alpha})^{(p-1)(lp-q)}X-\sqrt[l]{\alpha}\cdot y.
\end{equation}

Since $\mathfrak{D}_l$ is universally catenary, it follows that $A[\zeta]$ has a unique height one prime containing $p$ and is generated locally by $(\sqrt[l]{\alpha},\mu-r,\zeta)$. However, $\zeta\cdot (\sqrt[l]{\alpha})^q=\mu-r$ and \Cref{zeta_integral_equation} shows that $\sqrt[l]{\alpha}$ is a multiple of $\zeta$ locally. So, the unique height one prime containing $p$ in $A[\zeta]$ is regular and by \Cref{HK1}, $A[\zeta]$ is regular in codimension one. Note that $(p-1)q\leq pq+1\leq \Gamma_{(\sqrt[l]{\alpha})}(g)$ and $(p-1)q<(p-1)pl=\mathrm{ord}_{\sqrt[l]{\alpha}}(p)$. Therefore, setting $J:=(\mu-r,\alpha^{q/l})A$, we see from \Cref{CM_dual_prop}(1), that $J^{-1}_{A}=A[\zeta]$. Thus $A[\zeta]$ satisfies $(S_2)$ and is hence integrally closed. In particular, $A[\zeta]$ is a module finite algebra extension of $R$. Moreover, by \Cref{CM_dual_prop}(3), $A[\zeta]$ is Cohen-Macaulay so that it is a small Cohen Macaulay algebra for $R$. Finally, it follows from construction that $R$ admits a small Cohen-Macaulay module of rank at most $(p-1)p^{p(d-1)+1}$.

\medskip
\noindent

%%%%%%%%%%%%%%%%%%%%%%%%%%%%%%%%%%%%%%%%%%%%%%%%%%%%%%%%

 \end{document}